\documentclass[a4paper,11pt]{article}

\usepackage{a4wide}
\usepackage{graphicx}
\usepackage{amssymb,amsmath,amsthm,mathrsfs,xfrac,mathtools}
\usepackage{enumerate, enumitem}
\usepackage[title]{appendix}
\usepackage{amsfonts}
\usepackage{hyperref}
\usepackage[utf8]{inputenc}
\usepackage{color}
\usepackage{pgf,tikz}
\usetikzlibrary{arrows}
\usepackage{booktabs}
\usepackage{float}

\usepackage{tikz}
\newlength\fwidth
\usetikzlibrary{arrows}
\usetikzlibrary{decorations.pathreplacing}
\usetikzlibrary{plotmarks}
\usepackage{pgfplots}
\usepackage{pgfgantt}
\usepackage{pdflscape}
\pgfplotsset{compat=newest}
\pgfplotsset{plot coordinates/math parser=false}
\usepackage{varwidth}
\usepgfplotslibrary{fillbetween}
\pgfplotsset{compat = 1.3}
\usepgfplotslibrary{patchplots}
\usetikzlibrary{quotes,angles}

\usepackage[margin=10pt,font=normal,labelfont=bf,
labelsep=endash]{caption}

\newtheorem{proposition}{Proposition}[section]
\newtheorem{theorem}[proposition]{Theorem}
\newtheorem{lemma}[proposition]{Lemma}
\newtheorem{definition}[proposition]{Definition}
\newtheorem{corollary}[proposition]{Corollary}
\newtheorem{remark}[proposition]{Remark}
\newtheorem{lgrthm}[proposition]{Algorithm}

\numberwithin{equation}{section}
\setlength{\delimitershortfall}{-0.1pt}
\allowdisplaybreaks[4]

\renewenvironment{proof}{\smallskip\noindent\emph{\textbf{Proof.}}%
  \hspace{1pt}}{\hspace{-5pt}{\nobreak\quad\nobreak\hfill\nobreak%
    $\square$\vspace{2pt}\par}\smallskip\goodbreak}

\newcommand{\C}[1]{\mathbf{C^{#1}}}
\newcommand{\Cc}[1]{\mathbf{C_c^{#1}}}
\renewcommand{\L}[1]{\mathbf{L^#1}}

\newcommand{\BV}{\mathbf{BV}}
\newcommand{\W}[2]{{\mathbf{W}^{#1,#2}}}
\newcommand{\modulo}[1]{{\left|#1\right|}}
\newcommand{\norma}[1]{{\left\|#1\right\|}}

\newcommand{\reali}{{\mathbb{R}}}
\newcommand{\R}{\mathbb R}

\newcommand{\interi}{{\mathbb{Z}}}

\renewcommand{\epsilon}{\varepsilon}
\renewcommand{\phi}{\varphi}

\renewcommand{\theta}{\vartheta}

\newcommand{\tv}{\mathinner{\rm TV}}

\renewcommand{\d}[1]{\mathinner{\mathrm{d}{#1}}}

\newcommand{\dt}{{\Delta t}}
\newcommand{\dx}{{\Delta x}}
\newcommand{\dy}{{\Delta y}}
\newcommand{\rh}[1]{\rho^{n}_{#1}}
\newcommand{\x}[1]{x_{#1}}

\newcommand{\vs}{v^{stat}}
\newcommand{\vd}{v^{dyn}}
\newcommand{\rv}{\boldsymbol{R}}
\newcommand{\sv}{\boldsymbol{S}}
\newcommand{\xv}{\boldsymbol{x}}
\newcommand{\yv}{\boldsymbol{y}}

\DeclareMathOperator{\sgn}{sgn}

\newcommand{\del}{\partial}

\renewcommand{\div}{\mathinner{\mathop{{\rm div}}}}

\newcommand{\be}{\begin{equation}}
\newcommand{\ee}{\end{equation}}

\definecolor{ffqqqq}{rgb}{1.,0.,0.}
\definecolor{uuuuuu}{rgb}{0.26666666666666666,0.26666666666666666,0.26666666666666666}

\makeatletter
\let\@fnsymbol\@arabic
\makeatother

\makeatletter
\newcommand\appendix@section[1]{%
\refstepcounter{section}%
\orig@section*{Appendix \@Alph\c@section: #1}%
\addcontentsline{toc}{section}{Appendix \@Alph\c@section: #1}%
}
\let\orig@section\section
\g@addto@macro\appendix{\let\section\appendix@section}
\makeatother

\title{Well--posedness of a non-local model for material flow \\on conveyor belts}

\author{Elena  Rossi\footnotemark[1] \and Jennifer K\"otz\footnotemark[2] \and Paola Goatin\footnotemark[1] \and  Simone G\"ottlich\footnotemark[2]}

\date{ }

\begin{document}
\maketitle
\footnotetext[1]{Inria Sophia
  Antipolis - M\'editerran\'ee, Universit\'e C\^ote d'Azur, Inria,
  CNRS, LJAD, 2004 route des Lucioles - BP 93, 06902 Sophia Antipolis
  Cedex, France. E-mail: \texttt{\{elena.rossi, paola.goatin\}@inria.fr}}
\footnotetext[2]{University of Mannheim, Department of Mathematics,
  68131 Mannheim, Germany. Email: \texttt{jkoetz@mail.uni-mannheim.de, goettlich@uni-mannheim.de
    }}

\begin{abstract}
In this paper, we focus on finite volume approximation schemes to solve a non-local material flow model in two space dimensions. Based on the numerical discretisation with dimensional splitting, we prove the convergence of the approximate solutions, where the main difficulty arises in the treatment of the discontinuity occurring in the flux function. In particular, we compare a Roe-type scheme to the well-established Lax-Friedrichs method and provide a numerical study highlighting the benefits of the Roe discretisation. Besides, we also prove the $\L1$-Lipschitz continuous dependence on the initial datum, ensuring the uniqueness of the solution.
\end{abstract}
  \noindent

  \medskip

  \noindent\textit{Keywords:} non-local conservation laws; material flow; Roe scheme; Lax-Friedrichs scheme

  \noindent\textit{2010~Mathematics Subject Classification:} 35L65, 65M12

  \medskip

\section{Introduction}
\label{sec:intro}

In this paper, we consider the Cauchy problem for a non-local scalar conservation law in two space dimensions, namely
\begin{equation}
  \label{eq:1}
  \left\{
    \begin{array}{l@{\qquad}r@{\,}c@{\,}l}
      \del_t \rho + \nabla \cdot (\rho \, \boldsymbol{\vs} (x,y) + \rho \, \boldsymbol{\vd} (\rho)) =0,
      & (t,x,y)
      &\in
      & [0,T] \times \reali^2,
      \\
      \rho(0,x,y) = \rho_o(x,y),
      & (x,y)
      &\in
      & \reali^2,
    \end{array}
  \right.
\end{equation}
for any $T>0$, where for the weighting factor $\epsilon>0$
\begin{equation}
  \label{eq:vdyn}
  \boldsymbol{\vd}(\rho) = H (\rho - \rho_{max}) \, \boldsymbol{I}(\rho)
  \qquad
    \mbox{with }
    \boldsymbol{I}(\rho)
    =
    - \epsilon \dfrac{\nabla(\eta * \rho)}{\sqrt{1+ \norma{\nabla(\eta * \rho)}^2}}.
  \end{equation}
  Here, $H$ denotes the Heaviside function which becomes active whenever the maximal density $\rho_{max}>0$ is exceeded.

The above model was introduced in~\cite{original}, to describe the flow of objects on a conveyor belt. In particular, the unknown function $\rho=\rho(t,x,y)$ is the density of transported parts, $\boldsymbol{\vs}$ is the velocity vector field induced by the conveyor belt, which is constant in time.
So, below the maximal density, parts are transported with the velocity of the conveyor belt. On the other hand, the dynamic velocity vector field $\boldsymbol{\vd}$ is active only at high densities and accounts for colliding parts through the operator $\boldsymbol{I}(\rho)$.
The negative gradient of the convolution $\eta * \rho$ pushes the mass towards lower density regions. The particular choice of the collision operator $\boldsymbol{I}(\rho)$ was introduced in~\cite{ColomboGaravelloMercier2012} to describe crowd dynamics.

Conservation laws with non-local flux function have been recently introduced in the literature to describe transport phenomena accounting for non-local interaction effects among agents, such as road traffic flow~\cite{BlandinGoatin2016} or pedestrian dynamics~\cite{ColomboGaravelloMercier2012}.
General well-posedness results have been provided by~\cite{ACT2015} in the scalar one-dimensional case, while~\cite{ACG2015} deals with systems of non-local conservation laws in multi-space dimensions. Even if the latter result applies to our problem~\eqref{eq:1}, estimates in~\cite{ACG2015}  where obtained for general flux functions using finite volume approximate solutions constructed via Lax-Friedrichs scheme. The aim of the present paper is instead to derive sharp estimates for the Roe scheme, which is known to give less diffusive solutions and is therefore computationally more convenient, especially in the case of non-local problems in multi-D. The same remark holds for the $\L1$-stability estimates, which have been derived from scratch even if more general results are present in the literature~\cite{MercierDaSola,MercierV2}.

We finally remark that, even if the original model proposes the use of the discontinuous Heaviside function, the stability of the numerical schemes requires a smooth approximation of it. Indeed, it can be noticed that $\L\infty$-norm of the derivative $H'$ appears in the estimates (for instance in the CFL condition~\eqref{eq:CFLroe} that guarantees the $\BV$-estimates), which blow up with it.

The paper is organised as follows: for a better overview, we introduce our main results in Section~\ref{sec:MR} and start then to prove convergence of the approximate solution constructed by the Roe scheme in Section~\ref{sec:exist}. We also add a proof of the Lipschitz continuous dependence on the initial data in Section~\ref{sec:lipdep}. Sections~\ref{sec:lf} and~\ref{sec:results} are devoted to the comparison of results obtained by the Lax-Friedrichs method. The numerical results emphasise the good performance of the Roe scheme.

\section{Main results}
\label{sec:MR}

Throughout the paper, we will denote
$\mathcal{I} (r,s) := [\min\left\{r,s\right\}, \max\left\{r,s
\right\}]$, for any $r, s \in \R$.  We require the following
assumptions to hold:
\begin{enumerate}[label={$\boldsymbol{(v)}$}]
\item \label{vs} $\boldsymbol{\vs} \in \C2(\reali^2; \reali^2)$.
\end{enumerate}
\begin{enumerate}[label={$\boldsymbol{(H)}$}]
\item \label{H} The function $H$ is a smooth approximation of the
  Heaviside function such that, setting
  \begin{displaymath}
    f(r) = r \, H(r - \rho_{max}),
  \end{displaymath}
  the function $f$ has bounded derivative. In particular, we denote by
  $L_f$ the Lipschitz constant of the function $f$:
  \begin{equation}
    \label{eq:L}
    L_f  = \norma{f'}_{\L\infty(\reali)}.
  \end{equation}
\end{enumerate}
\begin{enumerate}[label={$\boldsymbol{(\eta)}$}]
\item \label{eta}
  $\eta \in (\C3 \cap \W3\infty )(\reali^2; \reali^+)$.
\end{enumerate}

Recall the definition of solution to the Cauchy problem~\eqref{eq:1},
see also~\cite{ACG2015, ACT2015, ColomboGaravelloMercier2012}. 

\begin{definition}
  \label{def:sol}
  Let $\rho_o \in \L\infty (\reali^2; \reali^+)$. A map
  $\rho: [0,T]\to \L\infty (\reali^2; \reali)$ is a solution
  to~\eqref{eq:1} if it is a Kru\v zkov solution to
  \begin{equation}
    \label{eq:Kr}
    \left\{
      \begin{array}{l@{\qquad}r@{\,}c@{\,}l}
        \del_t \rho + \nabla \cdot g(t, x, y,\rho) = 0,
        & (t,x,y)
        &\in
        & [0,T] \times \reali^2,
        \\
        \rho(0,x,y) = \rho_o(x,y),
        & (x,y)
        &\in
        & \reali^2,
      \end{array}
    \right.
  \end{equation}
  with
  $g(t,x, y,\rho) = \rho \, \boldsymbol{\vs}(x,y) - \epsilon \, \rho
  \, H(\rho - \rho_{\max}) \dfrac{\nabla(\eta * \rho)}{\sqrt{1+
      \norma{\nabla(\eta * \rho)}^2}}$.
\end{definition}
\noindent Above, for the definition of Kru\v zkov solution we refer
to~\cite[Definition~1]{Kruzkov}.

\begin{theorem}
  \label{thm:main}
  Let $\rho_o \in (\L\infty \cap \BV) (\reali^2; \reali^+)$. Let
  assumptions~\ref{vs}, \ref{H} and~\ref{eta} hold. Then, for all
  $T>0$, there exists a unique weak entropy solution
  $\rho \in (\L\infty \cap \BV) ([0,T] \times \reali^2; \reali^+)$ to
  problem~\eqref{eq:1}. Moreover, the following estimates hold: for
  all $t \in [0,T]$
  \begin{align*}
    \norma{\rho(t)}_{\L1} =  \
    & \norma{\rho_o}_{\L1},
    \\
    \norma{\rho(t)}_{\L\infty} \leq  \
    & \norma{\rho_o}_{\L\infty} \, e^{\mathcal{C}_{\infty} \, t},
    \\
    \tv(\rho(t)) \leq \
    & \tv(\rho_o) \, e^{2 \, t \, \mathcal{K}_1}
      + \frac{2\, \mathcal{K}_2}{\mathcal{K}_1} \left(e^{2 \, t \, \mathcal{K}_1} - 1\right),
    \\
    \norma{\rho(t) - \rho(t - \tau)}_{\L1} \leq \
    &  2 \, \mathcal{C}_t (t) \, \tau, \quad \mbox { for } \tau >0,
  \end{align*}
  where $\mathcal{C}_\infty$ is defined in~\eqref{eq:Cinf},
  $\mathcal{K}_1$ is defined in~\eqref{eq:K1defroe}, $\mathcal{K}_2$
  is defined in~\eqref{eq:K2defroe} and $\mathcal{C}_t(t)$ is as
  in~\eqref{eq:Ct}.
\end{theorem}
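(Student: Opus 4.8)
The plan is to prove existence and the stated a priori bounds by constructing approximate solutions via the Roe-type finite volume scheme with dimensional (Strang or simple) splitting announced in the introduction, and then passing to the limit; uniqueness and the $\L1$-Lipschitz dependence on the datum are deferred to the separate stability argument of Section~\ref{sec:lipdep}. Since the convolution term $\nabla(\eta*\rho)$ makes the flux $g$ depend on $\rho$ only through a \emph{smoothing} operator, the standard strategy for non-local conservation laws applies: one freezes the non-local velocity field $\boldsymbol{w}[\rho] := \boldsymbol{\vs} - \epsilon\,H(\rho-\rho_{max})\,\nabla(\eta*\rho)/\sqrt{1+\norma{\nabla(\eta*\rho)}^2}$ evaluated at the approximate solution, so that each step is essentially a linear transport problem with a $\C2$ (in space) divergence field, discretised by Roe. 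The discontinuity of the original flux is harmless here because assumption~\ref{H} replaces $H$ by a smooth approximation, so $f(r)=r\,H(r-\rho_{max})$ is Lipschitz with constant $L_f$ as in~\eqref{eq:L}, and $g$ is $\C1$ in all variables.

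The core of the argument is the sequence of a priori estimates on the Roe approximations $\rho^n$, carried out in the order: (i) \emph{conservation of mass} $\norma{\rho^n}_{\L1}=\norma{\rho_o}_{\L1}$, which is immediate from the conservative form of the scheme and compact support propagation (finite speed, since $\boldsymbol{\vs}$ and $\boldsymbol I$ are bounded); (ii) \emph{$\L\infty$ bound} $\norma{\rho^n(t)}_{\L\infty}\le \norma{\rho_o}_{\L\infty}e^{\mathcal C_\infty t}$, obtained from a discrete maximum-principle-type computation in which the growth comes from the $\div\boldsymbol{w}$ term, with $\mathcal C_\infty$ (defined in~\eqref{eq:Cinf}) controlling $\norma{\div\boldsymbol{\vs}}_{\L\infty}$ and $\norma{D^2(\eta*\rho)}$ bounds via $\nr{\eta}_{\W3\infty}$; (iii) the \emph{total variation estimate}, which is the main obstacle. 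One writes the discrete evolution of $\tv(\rho^n)$, splitting the contributions of the two half-steps; the Roe numerical viscosity must be shown to be TV-diminishing for the linearised flux, under a CFL condition of the form~\eqref{eq:CFLroe} involving $L_f$, $\norma{\boldsymbol{\vs}}_{\C1}$, and $\nr{\eta}_{\W3\infty}$; the non-local coupling produces a source term because the frozen velocity field changes between steps, and bounding its spatial variation requires differentiating $\boldsymbol I(\rho)$ and estimating $\norma{\nabla \boldsymbol I(\rho)}_{\L1}$ in terms of $\tv(\rho)$ and $\norma{\rho}_{\L1}$ — this is where the constants $\mathcal K_1$ (in~\eqref{eq:K1defroe}) and $\mathcal K_2$ (in~\eqref{eq:K2defroe}) enter, and a discrete Gr\"onwall argument then yields $\tv(\rho^n(t))\le \tv(\rho_o)e^{2t\mathcal K_1}+\frac{2\mathcal K_2}{\mathcal K_1}(e^{2t\mathcal K_1}-1)$; (iv) the \emph{$\L1$ time-continuity} estimate $\norma{\rho^n(t)-\rho^n(t-\tau)}_{\L1}\le 2\,\mathcal C_t(t)\,\tau$, derived from the conservation law itself by integrating $\del_t\rho^n = -\nabla\cdot g$ against test functions and bounding $\norma{g}$ using the already-established $\L\infty$ and $\BV$ bounds, with $\mathcal C_t(t)$ as in~\eqref{eq:Ct}.

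With these uniform bounds in hand, I would invoke Helly's compactness theorem (via the uniform $\BV$ and $\L1$-in-time-continuity estimates, plus uniform compact support) to extract a subsequence $\rho^n\to\rho$ in $\L1_{loc}([0,T]\times\reali^2)$; the limit inherits all four estimates by lower semicontinuity / passage to the limit. The final step is a Lax-Wendroff-type argument showing that $\rho$ is a Kru\v zkov entropy solution of~\eqref{eq:Kr}: one checks that the discrete entropy inequalities satisfied by the Roe scheme (for the frozen-coefficient flux) pass to the limit, using that $\nabla(\eta*\rho^n)\to\nabla(\eta*\rho)$ uniformly because $\eta\in\C3\cap\W3\infty$ turns $\L1_{loc}$ convergence of $\rho^n$ into $\C1$ convergence of the convolution, so the frozen flux converges to $g(t,x,y,\rho)$. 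Existence and the estimates then follow; uniqueness follows a posteriori from the $\L1$-Lipschitz dependence proved in Section~\ref{sec:lipdep}. The delicate points I expect to spend the most effort on are the sign/monotonicity bookkeeping in the Roe TV estimate across the dimensional splitting, and the precise dependence of $\mathcal K_1,\mathcal K_2$ on $\eta$ through the second and third derivatives of $\eta*\rho$.
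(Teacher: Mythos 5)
Your proposal follows essentially the same route as the paper: the estimates are obtained by passing to the limit in the discrete bounds proved for the Roe scheme with dimensional splitting (Lemmas~\ref{lem:L1roe} and~\ref{lem:Linfroe}, Proposition~\ref{prop:bvroe}, Corollary~\ref{cor:bvxtroe}), combined with the discrete entropy inequality of Lemma~\ref{lem:die} and a standard compactness/Lax--Wendroff argument as in the cited literature, while uniqueness comes from the $\L1$-Lipschitz dependence of Proposition~\ref{prop:lipdep}. The only minor slips are that the $\BV$ estimate needs the stricter CFL condition~\eqref{eq:CFLroe-v2} rather than~\eqref{eq:CFLroe}, and the exact $\L1$ identity uses positivity of the scheme rather than any compact-support argument.
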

The proof of Theorem~\ref{thm:main} is standard,
see~\cite[Theorem~2.3]{ACG2015}, which refers to~\cite{sanders}. The
uniqueness is ensured by Proposition~\ref{prop:lipdep}, which provides
the Lipschitz continuous dependence estimate of solutions
to~\eqref{eq:1} on the initial data.

The bounds presented in Theorem~\ref{thm:main} are obtained by passing
to the limit in the corresponding discrete bounds.

\section{Existence}
\label{sec:exist}

Introduce the uniform mesh of width $\dx$ along the $x$-axis and $\dy$
along the $y$-axis, and a time step $\dt$ subject to a CFL condition,
specified later on. For $k \in \interi$ set
\begin{align*}
  x_k = \ & (k-1/2) \dx,
  &
    y_k = \ & (k-1/2) \dy,
  \\
  x_{k+1/2} = \ & k \dx,
  &
    y_{k+1/2} = \ & k \dy,
\end{align*}
where $(x_{i+\frac12},y_j)$ and $(x_{i}, y_{j+1/2})$ denote the cells
interfaces and $(x_i, y_j)$ are the cells centres. Set
$N_T = \lfloor T/\dt \rfloor$ and let $t^n = n \, \dt$ ,
$n=0, \ldots, N_T$, be the time mesh. Set $\lambda_x = \dt/\dx$ and
$\lambda_y = \dt / \dy$, and let $\alpha, \beta \geq 1$ be the
viscosity coefficients.

For the sake of shortness, sometimes we will use also the notation
$x_{i,j} = (x_i, y_j)$.

We approximate the initial datum as follows: for $i, \, j \in \interi$
\begin{displaymath}
  \rho_{i,j}^0 = \frac{1}{\dx \, \dy} \int_{x_{i-\sfrac12}}^{x_{i+\sfrac12}}
  \int_{y_{i-\sfrac12}}^{y_{i+\sfrac12}} \rho_o(x,y) \d{x} \d{y},
\end{displaymath}
and we define a piece-wise constant solution to~\eqref{eq:1} as
\begin{equation}
  \label{eq:rhodelta}
  \rho_{\Delta} (t,x,y ) =  \rh{i,j}
  \quad \mbox{ for } \quad
  \left\{
    \begin{array}{r@{\;}c@{\;}l}
      t & \in &[t^n, t^{n+1}[ \,,
      \\
      x & \in & [x_{i-1/2}, x_{i+1/2}[ \,,
      \\
      y & \in & [y_{j-1/2}, y_{j+1/2}[ \,,
    \end{array}
  \right.
  \quad \mbox{ where } \quad
  \begin{array}{r@{\;}c@{\;}l}
    n & = & 0, \ldots, N_T-1,
    \\
    i & \in & \interi,
    \\
    j & \in & \interi.
  \end{array}
\end{equation}
through a modified Roe-type scheme with dimensional splitting, exactly
as in~\cite{original}:
\begin{lgrthm}
  \label{alg:2}
  \begin{align}
    \label{eq:fx1}
    & V_1 (x,y,u,w) = \vs_1(x,y) \, u + \min\{0, \, \vs_1(x,y)\} (w-u)
    \\
    \label{eq:fx2}
    & F (u,w, J(t,x,y)) =
      J (t, x,y) \, f(u) + \min\{0, J(t,x,y)\} \left(f(w) -f(u)\right)
    \\ \label{eq:gx1}
    & V_2 (x,y,u,w) = \vs_2(x,y) \, u + \min\{0, \, \vs_2(x,y)\} (w-u)
    \\
    &\texttt{for } n=0,\ldots N_T-1 \nonumber
    \\
    \label{eq:scheme1roe}
    & \qquad
      \rho_{i,j}^{n+1/2} = \rh{i,j} - \lambda_x \left[
      V_1(\x{i+\sfrac12,j},\rh{i,j}, \rh{i+i,j})
      -
      V_1(\x{i-\sfrac12,j},\rh{i-1,j}, \rh{i,j})
      \right.
    \\ \nonumber
    &
      \qquad\quad\qquad
      \left.
      + F (\rh{i,j}, \rh{i+1,j}, J^n_1(\x{i+\sfrac12,j})
      - F (\rh{i-1,j}, \rh{i,j}, J^n_1 ( \x{i-\sfrac12,j}))\right]
    \\ \label{eq:scheme2roe}
    & \qquad \rho_{i,j}^{n+1} =
      \rho_{i,j}^{n+1/2} - \lambda_y \left[
      V_2(\x{i, j+\sfrac12},\rh{i,j}, \rh{i,j+1})
      -
      V_2(\x{i, j-\sfrac12},\rh{i,j-1}, \rh{i,j})
      \right.
    \\ \nonumber
    &
      \qquad\quad\qquad
      \left.
      + F (\rh{i,j}, \rh{i,j+1}, J^n_2(\x{i, j+\sfrac12})
      - F (\rh{i,j-1}, \rh{i,j}, J^n_2 ( \x{i,j-\sfrac12}))\right]
    \\
    & \texttt{end} \nonumber
  \end{align}
\end{lgrthm}

Above, we set
$\boldsymbol{I} (\rho^n) (x,y) = \left(J_1^n (x,y), \, J_2^n
  (x,y)\right)$ and the convolution products are computed through the
following quadrature formula, for $k=1,2$,
\begin{equation} \label{eq:conv}
  (\partial_i \eta * \rho ) (x_i,y_j) =
  \dx \, \dy \sum_{k, \ell \in \interi} \rho_{k,\ell} \, \partial_i \eta(x_{i-k}, y_{j-\ell}),
\end{equation}
where $\partial_1 \eta=\partial_x\eta$ and
$\partial_2\ eta=\partial_y\eta$.
Remark that the choice of evaluating the numerical flux at $t^n$ for both fractional steps allows to save computational time, because the convolution products~\eqref{eq:conv} are computed only once per time step.

Introduce the following notation, which is of use below:
\begin{equation}
  \label{eq:notvJ}
  v_{i+1/2} =  \vs_1 (x_{i+1/2}).
\end{equation}

\subsection{Positivity}
\label{sec:pos}

In the case of positive initial datum, we prove that under a suitable
CFL condition the approximate solution to~\eqref{eq:1} constructed via
the Algorithm~\ref{alg:2} preserves the positivity.

\begin{lemma}{\bf (Positivity)} Let
  $\rho_o \in \L\infty (\reali^2; \reali^+)$. Let~\ref{vs}, \ref{H},
  and~\ref{eta} hold. Assume that
\begin{align}
   \label{eq:CFLroe}
   \lambda_x \leq \
   & \frac{1}{2(\epsilon + \norma{\vs_1}_{\L\infty})},
   &
   \lambda_y \leq \
   & \frac{1}{2(\epsilon + \norma{\vs_2}_{\L\infty})}.
\end{align}
Then, for all $t>0$ and $(x,y) \in \reali^2$, the piece-wise constant
approximate solution $\rho_\Delta$~\eqref{eq:rhodelta} constructed
through Algorithm~\ref{alg:2} is such that
$\rho_\Delta (t,x,y) \geq 0$.
\end{lemma}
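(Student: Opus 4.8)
\bigskip

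The plan is to proceed by induction on the time index $n$ and, within a single time step, on the two fractional steps of Algorithm~\ref{alg:2}, thereby reducing the statement to the one-dimensional claim that each sweep in~\eqref{eq:scheme1roe}--\eqref{eq:scheme2roe} maps a nonnegative grid function into a nonnegative grid function, provided the relevant CFL bound in~\eqref{eq:CFLroe} holds. The base case is immediate: $\rho^0_{i,j}$ is a cell average of $\rho_o\ge0$, so $\rho^0_{i,j}\ge0$, and the piecewise constant reconstruction~\eqref{eq:rhodelta} inherits nonnegativity from the grid values.

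Consider the first ($x$-)sweep~\eqref{eq:scheme1roe}. I would first put the numerical fluxes in upwind form: with $a^+=\max\{a,0\}\ge0$ and $a^-=\min\{a,0\}\le0$, definitions~\eqref{eq:fx1} and~\eqref{eq:fx2} read $V_1(x,y,u,w)=\vs_1(x,y)^+\,u+\vs_1(x,y)^-\,w$ and $F(u,w,J)=J^+f(u)+J^-f(w)$. Next I would exploit the structure $f(r)=r\,H(r-\rho_{max})$ from~\ref{H}: since $H$ is a smooth approximation of the Heaviside, we may assume $0\le H\le1$, hence for every grid value $f(\rho^n_{k,\ell})=\theta^n_{k,\ell}\,\rho^n_{k,\ell}$ with $\theta^n_{k,\ell}:=H(\rho^n_{k,\ell}-\rho_{max})\in[0,1]$ (the value of $\theta^n_{k,\ell}$ being irrelevant when $\rho^n_{k,\ell}=0$, as $f(0)=0$). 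Substituting these two facts into~\eqref{eq:scheme1roe} and collecting terms expresses $\rho^{n+1/2}_{i,j}$ as a linear combination of $\rho^n_{i-1,j}$, $\rho^n_{i,j}$ and $\rho^n_{i+1,j}$, and it then remains to check that the three coefficients are nonnegative.

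The two off-diagonal coefficients are nonnegative with no restriction on $\dt$: the coefficient of $\rho^n_{i-1,j}$ equals $\lambda_x\bigl(\vs_1(x_{i-1/2,j})^+ + J^n_1(x_{i-1/2,j})^+\,\theta^n_{i-1,j}\bigr)$ and that of $\rho^n_{i+1,j}$ equals $-\lambda_x\bigl(\vs_1(x_{i+1/2,j})^- + J^n_1(x_{i+1/2,j})^-\,\theta^n_{i+1,j}\bigr)$, both $\ge0$ since $a^+\ge0$, $a^-\le0$ and the $\theta^n$'s lie in $[0,1]$. The diagonal coefficient has the form $1-\lambda_x\,B_{i,j}$ with $B_{i,j}=\vs_1(x_{i+1/2,j})^+ - \vs_1(x_{i-1/2,j})^- + \bigl(J^n_1(x_{i+1/2,j})^+ - J^n_1(x_{i-1/2,j})^-\bigr)\theta^n_{i,j}\ge0$, and the only genuine point of the proof is to bound $B_{i,j}$. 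For this I would use $\modulo{\vs_1}\le\norma{\vs_1}_{\L\infty}$ together with the pointwise bound $\modulo{J^n_1(x,y)}\le\epsilon$ — which follows from~\eqref{eq:vdyn}, since $\modulo{z}/\sqrt{1+\modulo{z}^2}<1$ for every $z\in\reali^2$, independently of the quadrature~\eqref{eq:conv} used to evaluate the convolution — and $\theta^n_{i,j}\le1$, so that $B_{i,j}\le 2\bigl(\norma{\vs_1}_{\L\infty}+\epsilon\bigr)$. Then the first inequality in~\eqref{eq:CFLroe} gives $\lambda_x B_{i,j}\le1$, the diagonal coefficient is $\ge0$, and $\rho^{n+1/2}_{i,j}$ is a nonnegative combination of nonnegative numbers. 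The second ($y$-)sweep~\eqref{eq:scheme2roe} is handled verbatim, with $\vs_2$, $J^n_2$ and $\lambda_y$ in place of $\vs_1$, $J^n_1$ and $\lambda_x$; composing the two sweeps closes the induction.

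The only delicate points I foresee are bookkeeping ones: tracking signs correctly when $\vs_k$ and $J^n_k$ are split into positive and negative parts, and making explicit the standard facts that the mollified Heaviside may be taken with values in $[0,1]$ and that the quadrature-based discrete collision term still satisfies $\modulo{J^n_k}\le\epsilon$. Beyond that, the argument reduces to the observation that a nonnegative combination of nonnegative numbers is nonnegative, applied once per fractional step.
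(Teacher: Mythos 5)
Your proposal is correct and follows essentially the same route as the paper: both arguments reduce to observing that, in upwind form, the only negative contribution to $\rho^{n+1/2}_{i,j}$ is proportional to $\rho^n_{i,j}$ with coefficient at most $2\lambda_x(\norma{\vs_1}_{\L\infty}+\epsilon)\le 1$ under~\eqref{eq:CFLroe}, using $\modulo{J^n_1}\le\epsilon$ and $0\le f(r)\le r$ (equivalently $0\le H\le 1$, which the paper also uses implicitly). Your explicit rewriting as a nonnegative linear combination via $f(\rho)=\theta\rho$ is just a slightly more systematic packaging of the paper's flux estimates.
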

\begin{proof}
  Fix $n$ between $0$ and $N_T-1$ and assume that $\rh{i,j} \geq 0$
  for all $i, \, j \in \interi$. Consider~\eqref{eq:scheme1roe}, with
  the notation~\eqref{eq:fx1} and~\eqref{eq:fx2}, and observe that:
  \begin{align*}
    & V_1 (x_{i+1/2,j}, \rh{i,j}, \rh{i+1,j})
      + F (\rh{i,j}, \rh{i+1,j}, J^n_1(\x{i+1/2,j}))
    \\
    \leq \
    & \vs_1(x_{i+1/2,j}) \, \rh{i,j}
      + J^n_1(x_{i+1/2,j}) \, f(\rh{i,j})
      \leq
      \left(\norma{\vs_1}_{\L\infty} + \epsilon \right) \rh{i,j},
    \\
    & V_1 (x_{i-1/2,j}, \rh{i-1,j}, \rh{i,j})
      +F (\rh{i-1,j}, \rh{i,j}, J^n_1 (\x{i-1/2,j}))
    \\
    \geq \
    &\vs_1(x_{i-1/2,j}) \, \rh{i,j}
      + J^n_1(x_{i-1/2,j}) \, f(\rh{i,j})
      \geq
      - \left(\norma{\vs_1}_{\L\infty} + \epsilon \right) \rh{i,j}.
  \end{align*}
  Therefore, by~\eqref{eq:scheme1roe},
  \begin{displaymath}
    \rho_{i,j}^{n+1/2}
    \geq \rh{i,j} - 2\, \lambda_x \left(\norma{\vs_1}_{\L\infty} + \epsilon\right) \rh{i,j}
    \geq 0,
  \end{displaymath}
  thanks to the CFL condition~\eqref{eq:CFLroe}. Starting
  from~\eqref{eq:scheme2roe}, an analogous argument shows that
  $\rho_{i,j}^{n+1}\geq 0$, concluding the proof.
\end{proof}

\subsection{\texorpdfstring{$\L1$}{L 1} bound}
\label{sec:l1}

The following result on the $\L1$ bound follows from the conservation
property of the Roe scheme.
\begin{lemma}{\bf ($\L1$ bound)}\label{lem:L1roe} Let
  $\rho_o \in \L\infty (\reali^2; \reali^+)$. Let~\ref{vs}, \ref{H},
  \ref{eta} and~\eqref{eq:CFLroe} hold. Then, for all $t>0$ and
  $(x,y) \in \reali^2$, $\rho_\Delta$~\eqref{eq:rhodelta} constructed
  through Algorithm~\ref{alg:2} satisfies
  \begin{equation}
    \label{eq:l1}
    \norma{\rho_\Delta(t,\cdot,\cdot)}_{\L1(\reali^2)} = \norma{\rho_o}_{\L1(\reali^2)}.
  \end{equation}
\end{lemma}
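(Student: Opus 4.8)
The plan is to exploit the conservative form of Algorithm~\ref{alg:2} together with the positivity property just established. Since $\rho_o \geq 0$, the previous lemma gives $\rho_\Delta(t,x,y) \geq 0$ for all $t>0$, hence for $t \in [t^n, t^{n+1}[$ one has $\norma{\rho_\Delta(t,\cdot,\cdot)}_{\L1(\reali^2)} = \dx\,\dy \sum_{i,j \in \interi} \rh{i,j}$, and it suffices to prove that the total discrete mass $\sum_{i,j\in\interi} \rh{i,j}$ is independent of $n$.

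First I would record that both fractional steps are in conservation form. Setting
\[
\mathcal{F}^n_{i+1/2,j} := V_1\bigl(\x{i+1/2,j},\rh{i,j},\rh{i+1,j}\bigr) + F\bigl(\rh{i,j},\rh{i+1,j},J^n_1(\x{i+1/2,j})\bigr),
\]
the update \eqref{eq:scheme1roe} reads $\rho^{n+1/2}_{i,j} = \rh{i,j} - \lambda_x\bigl(\mathcal{F}^n_{i+1/2,j} - \mathcal{F}^n_{i-1/2,j}\bigr)$, where $\mathcal{F}^n_{i+1/2,j}$ depends only on the two adjacent cell values and on data evaluated at the common interface $\x{i+1/2,j}$; an entirely analogous numerical flux $\mathcal{G}^{n}_{i,j+1/2}$ puts \eqref{eq:scheme2roe} in conservation form in the $y$-direction. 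Summing \eqref{eq:scheme1roe} over $i,j \in \interi$, the flux differences telescope and the boundary contributions at infinity vanish — immediately if $\rho_o$ is compactly supported, since the velocity fields $\boldsymbol{\vs}$ and $\boldsymbol{\vd}$ are bounded (the latter by $\epsilon$), so finite speed of propagation keeps $\rh{i,j}$ compactly supported for each fixed $n$; and in general by a standard approximation argument together with the fact that the scheme preserves $\ell^1$-summability. This yields $\sum_{i,j} \rho^{n+1/2}_{i,j} = \sum_{i,j} \rh{i,j}$, and the same computation applied to \eqref{eq:scheme2roe} gives $\sum_{i,j}\rho^{n+1}_{i,j} = \sum_{i,j}\rho^{n+1/2}_{i,j}$.

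Iterating on $n$, and using that, by the definition of $\rho^0_{i,j}$ and positivity of $\rho_o$, $\dx\,\dy\sum_{i,j\in\interi}\rho^0_{i,j} = \int_{\reali^2}\rho_o = \norma{\rho_o}_{\L1(\reali^2)}$, we obtain \eqref{eq:l1}. The only genuinely delicate point is the justification of the vanishing boundary terms in the telescoping sum for initial data that are merely in $\L1$ rather than compactly supported; the remainder is routine bookkeeping on the conservative structure of the scheme.
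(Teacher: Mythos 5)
Your argument is correct and is exactly the route the paper takes (the paper merely states that the bound ``follows from the conservation property of the Roe scheme''): positivity under the CFL condition~\eqref{eq:CFLroe} turns the $\L1$ norm into the discrete mass $\dx\,\dy\sum_{i,j}\rh{i,j}$, and the conservative form of both fractional steps makes the flux differences telescope. The only point you flag as delicate is in fact easy: since $f(0)=0$, $f$ is Lipschitz, and $\boldsymbol{\vs}$, $J^n_k$ are bounded, the interface fluxes are absolutely summable whenever $\rho^n\in\ell^1$, so the telescoping identity $\sum_{i}\bigl(\mathcal{F}^n_{i+1/2,j}-\mathcal{F}^n_{i-1/2,j}\bigr)=0$ holds directly by an index shift, without any compact-support or approximation argument.
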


\subsection{\texorpdfstring{$\L\infty$}{L infinity} bound}
\label{sec:linf}

\begin{lemma}{\bf ($\L\infty$~bound)}\label{lem:Linfroe} Let
  $\rho_o \in \L\infty (\reali^2; \reali^+)$. Let~\ref{vs}, \ref{H},
  \ref{eta} and~\eqref{eq:CFLroe} hold. Then, for all $t>0$ and
  $(x,y) \in \reali^2$, $\rho_\Delta$~\eqref{eq:rhodelta} constructed
  through Algorithm~\ref{alg:2} satisfies
  \begin{equation}
    \label{eq:linf}
    \norma{\rho_\Delta (t,\cdot,\cdot)}_{\L\infty (\reali^2)}
    \leq  \norma{\rho_o}_{\L\infty} \, e^{\mathcal{C}_\infty \, t},
  \end{equation}
  where
  \begin{equation}
    \label{eq:Cinf}
    \mathcal{C}_\infty =
      \norma{\partial_x \vs_1}_{\L\infty}
      + \norma{\partial_y \vs_2}_{\L\infty}
      + 4 \, \epsilon \,  \norma{\nabla^2 \eta}_{\L\infty} \norma{\rho_o}_{\L1}.
  \end{equation}
\end{lemma}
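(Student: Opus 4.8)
The plan is to derive a discrete pointwise bound on $\rho^{n+1}_{i,j}$ in terms of $\max_{k,\ell}\rho^n_{k,\ell}$, by exploiting the conservative form of the Roe scheme together with the positivity established in the previous lemma, and then to iterate this bound over the time steps so that the geometric growth factor becomes the exponential $e^{\mathcal{C}_\infty t}$ in the limit $\dt\to 0$. Since the scheme is split, I would treat the two fractional steps separately and collect the resulting growth constants at the end.

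First I would fix $n$, assume $\rho^n_{i,j}\ge 0$ for all $i,j$ (legitimate by the positivity lemma), and set $M^n = \sup_{i,j}\rho^n_{i,j}$. Looking at~\eqref{eq:scheme1roe} and using the definitions~\eqref{eq:fx1}--\eqref{eq:fx2}, I would rewrite the $x$-update as a convex-type combination: expand the differences of $V_1$ and $F$ and regroup the coefficients multiplying $\rho^n_{i-1,j}$, $\rho^n_{i,j}$, $\rho^n_{i+1,j}$. The key algebraic observation is that, because of the upwind structure (the $\min\{0,\cdot\}$ terms), the off-diagonal coefficients are nonnegative under the CFL condition~\eqref{eq:CFLroe}, while the coefficient of $\rho^n_{i,j}$ differs from the "missing mass" $1-(\text{sum of off-diagonal coefficients})$ only by terms involving the spatial variation of $\vs_1$ and of the discrete convolution $J^n_1$. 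Concretely, replacing $f(\rho^n_{i,j})$ by $L$-Lipschitz-controlled quantities and writing $v_{i+1/2}-v_{i-1/2} = \dx\,\partial_x\vs_1(\xi)$ and similarly for $J^n_1(x_{i+1/2})-J^n_1(x_{i-1/2})$, one obtains
\begin{displaymath}
  \rho^{n+1/2}_{i,j} \le \left(1 + \dt\left(\norma{\partial_x\vs_1}_{\L\infty} + \epsilon\,\norma{\partial_x J^n_1}_{\L\infty}\right)\right) M^n.
\end{displaymath}
The term $\norma{\partial_x J^n_1}_{\L\infty}$ is then bounded, using the quadrature~\eqref{eq:conv} for $\nabla(\eta*\rho)$ and the explicit form of $\boldsymbol{I}$ in~\eqref{eq:vdyn}, by $\norma{\nabla^2\eta}_{\L\infty}$ times the discrete $\L1$ norm of $\rho^n$, which equals $\norma{\rho_o}_{\L1}$ by Lemma~\ref{lem:L1roe}; the factor $\sqrt{1+\norma{\cdot}^2}^{-1}$ only helps since differentiating it produces another bounded-by-one factor, and the crude constant $4$ absorbs the contributions of both components of $\boldsymbol{I}$ and the product-rule terms.

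Then I would repeat the same estimate for the $y$-step~\eqref{eq:scheme2roe}, starting from $M^{n+1/2}=\sup_{i,j}\rho^{n+1/2}_{i,j}$, picking up the factor $1+\dt(\norma{\partial_y\vs_2}_{\L\infty} + \epsilon\,\norma{\partial_y J^n_2}_{\L\infty})$. Multiplying the two one-step bounds and using $(1+a\dt)(1+b\dt)\le 1+(a+b)\dt + O(\dt^2) \le e^{(a+b+O(\dt))\dt}$, I would get $M^{n+1} \le (1+\mathcal{C}_\infty\dt + O(\dt^2))\,M^n$ with $\mathcal{C}_\infty$ as in~\eqref{eq:Cinf}. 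Iterating from $n=0$ to $n = \lfloor t/\dt\rfloor$ gives $M^n \le M^0 (1+\mathcal{C}_\infty\dt + O(\dt^2))^{n} \le \norma{\rho_o}_{\L\infty}\, e^{\mathcal{C}_\infty t}$ after letting the mesh-dependent correction vanish, which yields~\eqref{eq:linf} for $\rho_\Delta$.

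The main obstacle I anticipate is the careful bookkeeping in the regrouping of the Roe flux coefficients in the first step: one must check that every off-diagonal coefficient is genuinely $\ge 0$ under exactly the CFL condition~\eqref{eq:CFLroe} (not a stronger one), and that the diagonal coefficient, after all cancellations, is of the form $1 - (\text{sum of off-diagonal}) + \dt\times(\text{bounded variation terms})$ with no spurious sign issues coming from the Heaviside factor inside $f$. The Lipschitz bound~\eqref{eq:L} on $f$ and the smoothness of $H$ are exactly what is needed to control the terms where $f(\rho^n_{i,j})$ and $f(\rho^n_{i\pm1,j})$ get compared; the rest is routine, relying on Lemma~\ref{lem:L1roe} to keep the convolution-gradient bound uniform in $n$.
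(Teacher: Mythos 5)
Your overall strategy (one-step sup bound, iterate, pass to the exponential) is the same as the paper's, but the way you propose to get the one-step bound has a genuine gap, and it sits exactly at the point you flagged as "the main obstacle". Writing the $x$-step~\eqref{eq:scheme1roe} in incremental (convex-combination) form, the off-diagonal coefficients are
\begin{align*}
  C_{i+1/2} = \ & -\lambda_x\left(\min\{0,\vs_1(x_{i+1/2,j})\} + \min\{0,J^n_1(x_{i+1/2,j})\}\, f'(\xi_{i+1/2})\right),
  \\
  D_{i-1/2} = \ & \lambda_x\left(\max\{0,\vs_1(x_{i-1/2,j})\} + \max\{0,J^n_1(x_{i-1/2,j})\}\, f'(\xi_{i-1/2})\right),
\end{align*}
and these are nonnegative \emph{irrespective of any CFL condition} (since $f'\geq 0$); the CFL is not needed there. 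What the maximum principle actually requires is that the coefficient of $\rho^n_{i,j}$, namely $1-C_{i+1/2}-D_{i-1/2}$ up to the $O(\dt)$ source terms, be nonnegative. Since $C_{i+1/2}+D_{i-1/2}$ can be as large as $2\lambda_x\left(\norma{\vs_1}_{\L\infty}+\epsilon\, L_f\right)$, this forces a CFL of the type~\eqref{eq:CFLroe-v2}, involving $\epsilon\,L_f$, whereas the lemma only assumes~\eqref{eq:CFLroe}, which controls $\epsilon$ but not $\epsilon\,L_f$ (recall $L_f\approx 16$ for the atan approximation in the numerical section, so under~\eqref{eq:CFLroe} the diagonal coefficient can indeed be negative). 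When it is negative, positivity only lets you discard that term, leaving $\rho^{n+1/2}_{i,j}\leq (C_{i+1/2}+D_{i-1/2})\sup_{k,\ell}\rho^n_{k,\ell}+O(\dt)$, i.e.\ an $O(1)$ amplification per step instead of $1+O(\dt)$, which destroys the $e^{\mathcal{C}_\infty t}$ bound. In short, the step where "$f(\rho^n_{i,j})$ and $f(\rho^n_{i\pm1,j})$ get compared" via~\eqref{eq:L} is precisely what drags $L_f$ either into the CFL (not available here) or into the growth constant (contradicting the $L_f$-free form of~\eqref{eq:Cinf}); your plan cannot be closed under exactly~\eqref{eq:CFLroe}.

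The paper's proof avoids comparing $f$ at neighbouring cells altogether: it restricts attention to the sign configuration $\vs_1(x_{i+1/2,j})<0\leq\vs_1(x_{i-1/2,j})$, $J^n_1(x_{i+1/2,j})<0\leq J^n_1(x_{i-1/2,j})$ (claimed to be the worst case), in which the Roe fluxes~\eqref{eq:fx1}--\eqref{eq:fx2} are fully one-sided so no $f$-differences appear; it then drops the negative (outgoing) contributions using positivity and $f(r)\leq r$, and controls the remaining incoming terms through the interface increments $\vs_1(x_{i-1/2,j})-\vs_1(x_{i+1/2,j})=O(\dx)$ and $J^n_1(x_{i-1/2,j})-J^n_1(x_{i+1/2,j})=O(\dx)$ from~\eqref{eq:Jx}, together with $\L1$ conservation. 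This is exactly why neither~\eqref{eq:CFLroe} nor $\mathcal{C}_\infty$ contains $L_f$ (compare the remark following Lemma~\ref{lem:linf}). A minor additional slip in your write-up: $J^n_1$ already carries the factor $\epsilon$, so the per-step constant should read $\norma{\partial_x\vs_1}_{\L\infty}+2\,\epsilon\,\norma{\nabla^2\eta}_{\L\infty}\norma{\rho_o}_{\L1}$ rather than $\norma{\partial_x\vs_1}_{\L\infty}+\epsilon\,\norma{\partial_x J^n_1}_{\L\infty}$; the factor $4$ in~\eqref{eq:Cinf} then comes from the quotient-rule factor $2$ in~\eqref{eq:Jx} times the two fractional steps, not from "both components of $\boldsymbol{I}$" in a single step.
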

\begin{proof}
  Omitting the dependencies on
  $j$ and exploiting the notation introduced in~\eqref{eq:notvJ}, we

  observe that \eqref{eq:scheme1roe} attains its maximum for $v_{i+1/2} < 0$, $v_{i-1/2}\geq 0$, $ J_1^n(x_{i+1/2})< 0$ and $ J_1^n(x_{i-1/2})\geq 0$. In this case
  \begin{align*}
    \rho_{i,j}^{n+1/2}
    \leq \
      & \rh{i} - \lambda_x \left(
        \rh{i+1} \, v_{i+1/2} + J_1^n(x_{i+1/2}) \, f(\rh{i+1})
        \right)
        + \lambda_x \left(
        \rh{i-1} \, v_{i-1/2} + J_1^n(x_{i-1/2}) \, f(\rh{i-1})
        \right),
  \end{align*}
  where we use the positivity of each $\rh{i}$ and of the function $f$
  and discard all the terms giving a negative contribution.  Moreover, since $v_{i+1/2} < 0$ and $v_{i-1/2}\geq 0$,
  \begin{align*}
    \lambda_x \left(
    - \rh{i+1} \, v_{i+1/2} + \rh{i-1} \, v_{i-1/2}
    \right)
    \leq \
    & \lambda_x \, \norma{\rho^n}_{\L\infty} \left(
      -v_{i+1/2} + v_{i-1/2}
      \right)
    \\
    = \
    &
      \lambda_x \, \norma{\rho^n}_{\L\infty} \, (- \dx) \, \partial_x \vs_1(\hat x_i)
  \end{align*}
  with $\hat x_i \in \, ]x_{i-1/2}, x_{i+1/2}[$. In a similar way,
  since $ J_1^n(x_{i+1/2})< 0$ and $ J_1^n(x_{i-1/2})\geq 0$,
  exploiting also the fact that $f(r) \leq r$ for all $r\geq 0$, we
  get
  \begin{align*}
    \lambda_x \left(
    -  J_1^n(x_{i+1/2}) \, f(\rh{i+1}) +  J_1^n(x_{i-1/2}) \, f(\rh{i-1})
    \right)
    \leq \
    & \lambda_x \, \norma{\rho^n}_{\L\infty} \left(
      -  J_1^n(x_{i+1/2}) +  J_1^n(x_{i-1/2})
      \right)
    \\
    \leq \
    &\lambda_x \, \norma{\rho^n}_{\L\infty} \, 2 \, \epsilon \, \dx \,
      \norma{\nabla^2 \eta}_{\L\infty} \norma{\rho^n}_{\L1},
  \end{align*}
  thanks to~\eqref{eq:Jx}.  Therefore,
  \begin{align*}
    \rho_{i,j}^{n+1/2} \leq \
    \norma{\rho^n}_{\L\infty} \left[
    1 + \dt \left(
    \norma{\partial_x \vs_1}_{\L\infty} + 2 \, \epsilon
    \norma{\nabla^2 \eta}_{\L\infty} \norma{\rho_o}_{\L1}
    \right)
    \right].
  \end{align*}
  In a similar way we get
  \begin{align*}
    \rho_{i,j}^{n+1} \leq \
    \norma{\rho^{n+1/2}}_{\L\infty} \left[
    1 + \dt \left(
    \norma{\partial_y \vs_2}_{\L\infty} + 2 \, \epsilon
    \norma{\nabla^2 \eta}_{\L\infty} \norma{\rho_o}_{\L1}
    \right)
    \right],
  \end{align*}
  concluding the proof.
\end{proof}

\subsection{\texorpdfstring{$\BV$}{BV} bound}
\label{sec:bv}

\begin{proposition} {\bf ($\BV$ estimate in space)}\label{prop:bvroe}
  Let $\rho_o \in (\L\infty \cap \BV) (\reali^2;
  \reali^+)$. Let~\ref{vs}, \ref{H}, and~\ref{eta} hold. Assume that
  \begin{align}
   \label{eq:CFLroe-v2}
   \lambda_x \leq \
    & \frac{1}{3(\epsilon \, L_f + \norma{\vs_1}_{\L\infty})},
    &
      \lambda_y \leq \
    &  \frac{1}{3(\epsilon \, L_f + \norma{\vs_2}_{\L\infty})}.
  \end{align}
  Then, for all $t>0$, $\rho_\Delta$ in~\eqref{eq:rhodelta}
  constructed through Algorithm~\ref{alg:2} satisfies the following
  estimate: for all $n=0, \ldots, N_T$,
  \begin{equation}
    \label{eq:bvspaceroe}
    \sum_{i,j \in \interi} \left(
      \dy \, \modulo{\rh{i+1,j} - \rh{i,j}}
      + \dx \, \modulo{\rh{i,j+1} - \rh{i,j}}\right)
    \leq \mathcal{C}_x(t^n),
  \end{equation}
  where
  \begin{equation}
    \label{eq:Cx}
    \mathcal{C}_x(t) = e^{2 \,t \, \mathcal{K}_1}  \sum_{i,j \in \interi} \left(
      \dx \, \modulo{\rho^0_{i,j+1} - \rho^0_{i,j}}
      +  \dy \, \modulo{\rho^0_{i+1,j} -\rho^0_{i,j}}
    \right)
    +  \frac{2 \, \mathcal{K}_2}{\mathcal{K}_1}\left(e^{2 \, t \, \mathcal{K}_1} -1
    \right),
  \end{equation}
  with
  \begin{align}
    \label{eq:K1defroe}
    \mathcal{K}_1 = \
    &
    6  \left( \norma{\nabla \boldsymbol{\vs}}_{\L\infty} + 2 \, \epsilon \, L_f \,
    \norma{\nabla^2 \eta}_{\L\infty} \norma{\rho_o}_{\L1}\right),
    \\
    \label{eq:K2defroe}
    \mathcal{K}_2 = \
    &  \left(
      4 \, \epsilon \left(
        c_1 \norma{\rho_o}_{\L1} + c_2 \norma{\rho_o}_{\L1}^2
      \right)
      + 3 \, \norma{\nabla^2 \boldsymbol{\vs}}_{\L\infty}
      \right) \norma{\rho_o}_{\L1},
  \end{align}
  and $c_1, \, c_2 $ are defined in~\eqref{eq:c12}.
\end{proposition}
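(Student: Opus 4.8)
The plan is to exploit the dimensional splitting and to control, at each time step, separately the two contributions
\[
  \tv_x(\rho^n):=\dy\sum_{i,j\in\interi}\modulo{\rh{i+1,j}-\rh{i,j}},
  \qquad
  \tv_y(\rho^n):=\dx\sum_{i,j\in\interi}\modulo{\rh{i,j+1}-\rh{i,j}}
\]
to the left-hand side of~\eqref{eq:bvspaceroe}, tracking how each evolves across the $x$-substep $\rho^n\mapsto\rho^{n+1/2}$ in~\eqref{eq:scheme1roe} and across the entirely symmetric $y$-substep $\rho^{n+1/2}\mapsto\rho^{n+1}$ in~\eqref{eq:scheme2roe}. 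A preliminary ingredient is a set of pointwise bounds on the spatial variation of the non-local coefficients $J^n_k$. Writing $J^n_k=-\epsilon\,\bigl(\nabla(\eta*\rho^n)\bigr)_k\big/\sqrt{1+\norma{\nabla(\eta*\rho^n)}^2}$ with the convolution computed by the quadrature~\eqref{eq:conv}, and using that the normalisation $\xi\mapsto\xi/\sqrt{1+\norma{\xi}^2}$ is smooth with all derivatives bounded, I get $\norma{J^n_k}_{\L\infty}\le\epsilon$; from a single finite difference of~\eqref{eq:conv} and $\norma{\rho^n}_{\L1}=\norma{\rho_o}_{\L1}$ (Lemma~\ref{lem:L1roe}) comes the first-difference estimate $\modulo{J^n_k(x_{i+1/2})-J^n_k(x_{i-1/2})}\le 2\,\epsilon\,\dx\,\norma{\nabla^2\eta}_{\L\infty}\norma{\rho_o}_{\L1}$ used in~\eqref{eq:Jx} (and its $y$-analogue with $\dy$), while from a second finite difference, controlling also the derivative of the saturating denominator, comes $\modulo{J^n_k(x_{i+3/2})-2J^n_k(x_{i+1/2})+J^n_k(x_{i-1/2})}\le\epsilon\,\dx^2\bigl(c_1\norma{\rho_o}_{\L1}+c_2\norma{\rho_o}_{\L1}^2\bigr)$, with $c_1,c_2$ as in~\eqref{eq:c12} and the quadratic term $c_2$ produced by the denominator; here $\eta\in\W3\infty$ is exactly what is needed. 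Since~\eqref{eq:CFLroe-v2} is not weaker than~\eqref{eq:CFLroe}, positivity of the $\rh{i,j}$ (Section~\ref{sec:pos}) and Lemma~\ref{lem:L1roe} are available.

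For the $x$-differences across the $x$-substep I subtract~\eqref{eq:scheme1roe} at $(i+1,j)$ and at $(i,j)$. Within this substep $\vs_1(\cdot,y_j)$ and $J^n_1(\cdot,y_j)$ act as frozen coefficients, so, using the explicit form of $V_1$ and $F$ in~\eqref{eq:fx1}--\eqref{eq:fx2}, the positivity of the $\rh{i,j}$ and the bounds $0\le f(r)\le r$, $0\le f'(r)\le L_f$ for $r\ge0$, I rewrite $\rho^{n+1/2}_{i+1,j}-\rho^{n+1/2}_{i,j}$ as a linear combination — with coefficients non-negative by the CFL condition~\eqref{eq:CFLroe-v2} — of the first differences $\rh{\ell+1,j}-\rh{\ell,j}$ for $\ell=i-1,i,i+1$, plus a source produced by the $i$-variation of $\vs_1(\cdot,y_j)$ and of $J^n_1(\cdot,y_j)$. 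The combination telescopes when summed in $i$; one further difference of the source splits it into a part bounded by $\dx\bigl(\norma{\partial_x\vs_1}_{\L\infty}+2\epsilon L_f\norma{\nabla^2\eta}_{\L\infty}\norma{\rho_o}_{\L1}\bigr)\modulo{\rh{i+1,j}-\rh{i,j}}$ and a part bounded by $\dx^2\bigl(\norma{\partial_{xx}\vs_1}_{\L\infty}+\epsilon(c_1\norma{\rho_o}_{\L1}+c_2\norma{\rho_o}_{\L1}^2)\bigr)\rh{i,j}$. Multiplying by $\dy$, summing over $i,j$, and using $\lambda_x\dx=\dt$ together with the $\L1$-bound, this yields
\[
  \tv_x(\rho^{n+1/2})\le(1+\dt\,A_1)\,\tv_x(\rho^n)+\dt\,A_2\,\norma{\rho_o}_{\L1},
\]
with $A_1=\O\bigl(\norma{\nabla\boldsymbol{\vs}}_{\L\infty}+\epsilon L_f\norma{\nabla^2\eta}_{\L\infty}\norma{\rho_o}_{\L1}\bigr)$ and $A_2=\O\bigl(\norma{\nabla^2\boldsymbol{\vs}}_{\L\infty}+\epsilon(c_1\norma{\rho_o}_{\L1}+c_2\norma{\rho_o}_{\L1}^2)\bigr)$.

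The genuinely new point is the control of the $y$-differences across the $x$-substep, since $\rho^{n+1/2}_{i,j}$ depends on all rows of $\rho^n$ through the convolution in $J^n_1$. Denoting by $\mathcal{F}^{j}_{i+1/2}$ the row-$j$ numerical flux of~\eqref{eq:scheme1roe}, one has $\rho^{n+1/2}_{i,j+1}-\rho^{n+1/2}_{i,j}=(\rh{i,j+1}-\rh{i,j})-\lambda_x\bigl[(\mathcal{F}^{j+1}_{i+1/2}-\mathcal{F}^{j}_{i+1/2})-(\mathcal{F}^{j+1}_{i-1/2}-\mathcal{F}^{j}_{i-1/2})\bigr]$, and I decompose $\mathcal{F}^{j+1}_{i+1/2}-\mathcal{F}^{j}_{i+1/2}$ into a transport part, linear in the $y$-differences $\rh{\cdot,j+1}-\rh{\cdot,j}$ with the same upwind coefficient structure as in the previous paragraph (hence with the same telescoping and monotonicity under~\eqref{eq:CFLroe-v2}), and a source part built from the first $y$-differences $\vs_1(x_{i\pm1/2},y_{j+1})-\vs_1(x_{i\pm1/2},y_j)$ and $J^n_1(x_{i\pm1/2},y_{j+1})-J^n_1(x_{i\pm1/2},y_j)$, multiplied by $\rh{\cdot,j+1}$ or by an $x$-difference of $\rh{\cdot,j+1}$. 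The truncations $\min\{0,\cdot\}$ in $V_1,F$ enter the source only through differences $\min\{0,J^n_1(x_{i\pm1/2},y_{j+1})\}-\min\{0,J^n_1(x_{i\pm1/2},y_j)\}$, controlled by $\modulo{J^n_1(x_{i\pm1/2},y_{j+1})-J^n_1(x_{i\pm1/2},y_j)}$, so their lack of smoothness is harmless. Estimating the source after its $i$-difference by the mean value theorem for $\vs_1\in\C2$ and by the first- and mixed-difference bounds on $J^n_1$, I reach
\[
  \tv_y(\rho^{n+1/2})\le\tv_y(\rho^n)+\dt\,A_1'\bigl(\tv_x(\rho^n)+\tv_y(\rho^n)\bigr)+\dt\,A_2'\,\norma{\rho_o}_{\L1},
\]
with $A_1',A_2'$ of the same type as $A_1,A_2$. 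The delicate point here is the bookkeeping of the $\dx,\dy$ weights: each $y$-difference of a coefficient carries a factor $\dy$, which is exactly the transverse weight turning $\dy\sum_{i,j}\modulo{\rh{i+1,j}-\rh{i,j}}$ into $\tv_x(\rho^n)$, while mixed second differences carry a factor $\dx\,\dy$ that recombines with $\lambda_x\dx=\dt$, so that no spurious mesh ratio survives.

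Adding the two estimates, and using the symmetric ones for the $y$-substep, I obtain for $B^m:=\tv_x(\rho^m)+\tv_y(\rho^m)$ ($m=n,n+\tfrac12,n+1$)
\[
  B^{n+1/2}\le(1+\dt\,\mathcal{K}_1)\,B^n+\dt\,\tilde{\mathcal{K}}_2,
  \qquad
  B^{n+1}\le(1+\dt\,\mathcal{K}_1)\,B^{n+1/2}+\dt\,\tilde{\mathcal{K}}_2,
\]
where, collecting the contributions of the two substeps and the factor $2$ from $\modulo{a-b}\le\modulo{a}+\modulo{b}$, one may take $\mathcal{K}_1$ as in~\eqref{eq:K1defroe} and $\tilde{\mathcal{K}}_2=\O\,\mathcal{K}_2$ with $\mathcal{K}_2$ as in~\eqref{eq:K2defroe}. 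Composing the two half-steps, iterating in $n$ from $B^0=\sum_{i,j\in\interi}\bigl(\dx\modulo{\rho^0_{i,j+1}-\rho^0_{i,j}}+\dy\modulo{\rho^0_{i+1,j}-\rho^0_{i,j}}\bigr)$, and using $1+\dt\,\mathcal{K}_1\le e^{\dt\,\mathcal{K}_1}$ together with $\dt\sum_{k=0}^{n-1}e^{2k\dt\,\mathcal{K}_1}\le(e^{2t^n\mathcal{K}_1}-1)/(2\mathcal{K}_1)$, I arrive at $B^n\le e^{2t^n\mathcal{K}_1}B^0+\tfrac{2\mathcal{K}_2}{\mathcal{K}_1}(e^{2t^n\mathcal{K}_1}-1)$, which is~\eqref{eq:bvspaceroe}--\eqref{eq:Cx}. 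I expect the main obstacle to be the $y$-difference estimate across the $x$-substep of the third paragraph — cleanly separating the monotone transport part from the non-local source while keeping the $\dx,\dy$ powers balanced — together with the second- and mixed-difference bounds on $J^n_k$, which need two finite differences of the saturating nonlinearity and the full strength of $\eta\in\W3\infty$.
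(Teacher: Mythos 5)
Your proposal is correct and follows essentially the same route as the paper's proof: the same splitting of each substep difference into a monotone incremental part (with coefficients in $[0,\sfrac13]$ under~\eqref{eq:CFLroe-v2}, summing telescopically) plus a coefficient-variation source controlled by first, second and mixed finite differences of $\boldsymbol{\vs}$ and $J^n_k$, the latter being exactly the technical lemma of the appendix, and the same Gronwall-type recursion to reach~\eqref{eq:Cx}. Apart from harmless constant-factor differences in the sketched $J^n_k$ bounds, the argument matches the paper's treatment of $\mathcal{A}^n_{i,j},\mathcal{B}^n_{i,j}$ and $\mathcal{D}^n_{i,j},\mathcal{E}^n_{i,j}$.
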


\begin{remark}
  Observe that the CFL conditions~\eqref{eq:CFLroe-v2} are stricter than~\eqref{eq:CFLroe}.
\end{remark}

\begin{proof}
  We follow the idea of~\cite[Lemma~2.7]{ACG2015}. First consider the
  term
  \begin{displaymath}
    \sum_{i,j \in \interi} \dy\, \modulo{\rho^{n+1/2}_{i+1,j} - \rho^{n+1/2}_{i,j}}.
  \end{displaymath}
  In particular, fixing $i, j \in \interi$ and omitting the
  dependencies on $y_j$ for the sake of simplicity,
  by~\eqref{eq:scheme1roe} we get
  \begin{align*}
    \rho_{i+1}^{n+1/2} - \rho_{i}^{n+1/2}
    = \
    & \rh{i+1} - \rh{i}
      - \lambda_x \left[ V_1 (\x{i+3/2}, \rh{i+1}, \rh{i+2})
      +F (\rh{i+1}, \rh{i+2}, J^n_1 (x_{i+3/2}))
      \right.
    \\
    & \qquad\qquad\qquad\quad
      -V_1 (\x{i+1/2}, \rh{i}, \rh{i+1})
      -F (\rh{i}, \rh{i+1}, J^n_1 (x_{i+1/2}))
    \\
    & \qquad\qquad\qquad\quad
      - V_1 (\x{i+1/2}, \rh{i}, \rh{i+1})
      - F (\rh{i}, \rh{i+1}, J^n_1 (x_{i+1/2}))
    \\
    & \qquad\qquad\qquad\quad
      \left.
      +  V_1 (\x{i-1/2}, \rh{i-1}, \rh{i})
      +F (\rh{i-1}, \rh{i}, J^n_1 (x_{i-1/2}))
      \right]
    \\
    & \pm \lambda_x \left[
      V_1 (\x{i+3/2}, \rh{i}, \rh{i+1})
      +F (\rh{i}, \rh{i+1}, J^n_1 (x_{i+3/2}))
      \right.
    \\
    &  \qquad\left.
      -V_1 (\x{i+1/2}, \rh{i-1}, \rh{i})
      -F (\rh{i-1}, \rh{i}, J^n_1 (x_{i+1/2}))\right]
    \\
    = \
    & \mathcal{A}_{i,j}^n - \lambda_x \, \mathcal{B}_{i,j}^n,
  \end{align*}
  where we set
  \begin{align*}
    \mathcal{A}_{i,j}^n = \
    & \rh{i+1} - \rh{i} - \lambda_x \left[
      V_1 (\x{i+3/2}, \rh{i+1}, \rh{i+2})
      +F (\rh{i+1}, \rh{i+2}, J^n_1 (x_{i+3/2})) \right.
    \\
    & \qquad\qquad\qquad\quad
      -V_1 (\x{i+1/2}, \rh{i}, \rh{i+1})
      -F (\rh{i}, \rh{i+1}, J^n_1 (x_{i+1/2}))
    \\
    & \qquad\qquad\qquad\quad
      +V_1 (\x{i+1/2}, \rh{i-1}, \rh{i})
      +F (\rh{i-1}, \rh{i}, J^n_1 (x_{i+1/2}))
    \\
    & \qquad\qquad\qquad\quad
      \left.
      -V_1 (\x{i+3/2}, \rh{i}, \rh{i+1})
      -F (\rh{i}, \rh{i+1}, J^n_1 (x_{i+3/2}))\right],
    \\
    \mathcal{B}_{i,j}^n = \
    &  V_1 (\x{i+3/2}, \rh{i}, \rh{i+1})
      +F (\rh{i}, \rh{i+1}, J^n_1 (x_{i+3/2}))
    \\
    &  -V_1 (\x{i+1/2}, \rh{i-1}, \rh{i})
      -F (\rh{i-1}, \rh{i}, J^n_1 (x_{i+1/2}))
    \\
    &+  V_1 (\x{i-1/2}, \rh{i-1}, \rh{i})
      +F (\rh{i-1}, \rh{i}, J^n_1 (x_{i-1/2}))
    \\
    &    -V_1 (\x{i+1/2}, \rh{i}, \rh{i+1})
      -F (\rh{i}, \rh{i+1}, J^n_1 (x_{i+1/2})).
  \end{align*}
  For the sake of shortness, introduce the following notation
  \begin{equation}
    \label{eq:10}
    H^n_{k,\ell}(u,w) = V_1(x_{k,\ell},u,w)+F(u,w,J^n_1(x_{k,\ell})),
  \end{equation}
  so that, dropping the $j$ dependencies, $\mathcal{A}^n_{i,j}$ reads
  \begin{align*}
    \mathcal{A}_{i,j}^n= & \ \rh{i+1} - \rh{i} - \lambda_x
     \left[
      H^n_{i+3/2} (\rh{i+1},\rh{i+2})
      -H^n_{i+1/2} (\rh{i},\rh{i+1})\right.
    \\
    & \left.
      +H^n_{i+1/2,j} (\rh{i-1,j},\rh{i,j})
      -H^n_{i+3/2,j} (\rh{i,j},\rh{i+1,j})\right]
    \\
   = \
    & \rh{i+1} - \rh{i}
      - \lambda_x \, \frac{
      H^n_{i+3/2} ( \rh{i+1},\rh{i+2})
      - H^n_{i+3/2} ( \rh{i+1},\rh{i+i}) }{\rh{i+2} - \rh{i+1}}
      \left(\rh{i+2}-\rh{i+1}\right)
    \\
    &
      -\lambda_x \, \frac{H^n_{i+3/2}( \rh{i+1},\rh{i+i})
      - H^n_{i+3/2} (\rh{i},\rh{i+i})}{\rh{i+1}-\rh{i}}
      \left(\rh{i+1}-\rh{i}\right)
    \\
    & + \lambda_x \, \frac{ H^n_{i+1/2}( \rh{i}, \rh{i+1})
      -  H^n_{i+1/2} (\rh{i}, \rh{i})}{\rh{i+1} - \rh{i}}
      \left(\rh{i+1}-\rh{i}\right)
    \\
    & + \lambda_x \, \frac{ H^n_{i+1/2}(\rh{i}, \rh{i})
      -  H^n_{i+1/2}( \rh{i-1}, \rh{i}) }{\rh{i} - \rh{i-1}}
      \left(\rh{i} - \rh{i-1} \right)
    \\
    = \
    & \delta_{i+1}^n  \left(\rh{i+2}-\rh{i+1}\right)
      + \theta_i^n  \left(\rh{i} - \rh{i-1} \right)
      + (1 - \delta_i^n - \theta_{i+1}^n) \left(\rh{i+1}-\rh{i}\right),
  \end{align*}
  where
  \begin{align}
    \label{eq:deltainroe}
    \delta_i^n = \ &
                     \begin{cases}
                       - \lambda_x \, \dfrac{H^n_{i+1/2} ( \rh{i},
                         \rh{i+1})- H^n_{i+1/2} (\rh{i},
                         \rh{i})}{\rh{i+1}-\rh{i}} & \mbox{if } \rh{i}
                       \neq \rh{i+1},
                       \\
                       0 & \mbox{if } \rh{i} = \rh{i+1} ,
                     \end{cases}
    \\
    \label{eq:thetainroe}
    \theta_i^n = \ &
                     \begin{cases}
                       \lambda_x \, \dfrac{H^n_{i+1/2} (\rh{i},
                         \rh{i}) - H^n_{i+1/2}(\rh{i-1},
                         \rh{i})}{\rh{i}-\rh{i-1}} & \mbox{if } \rh{i}
                       \neq \rh{i-1},
                       \\
                       0 & \mbox{if } \rh{i} = \rh{i-1}.
                     \end{cases}
  \end{align}
  Exploiting~\eqref{eq:10}, observe that, whenever
  $\rh{i}\neq \rh{i+1}$,
  \begin{align*}
    \delta_i^n = \
    & -\frac{\lambda_x}{\rh{i+1}-\rh{i}}
      \left[
      V_1(x_{i+1/2},\rh{i}, \rh{i+1})
      + F (\rh{i}, \rh{i+1}, J^n_1 (x_{i+1/2}))\right.
    \\
    & \qquad\qquad\qquad\left.
      -V_1(x_{i+1/2},\rh{i}, \rh{i})
      - F (\rh{i}, \rh{i}, J^n_1 (x_{i+1/2}))
      \right]
    \\
    = \
    & \! -\frac{\lambda_x}{\rh{i+1}-\rh{i}}\!\!
      \left[ \min\left\{0, \vs_1(x_{i+1/2})\right\} (\rh{i+1} - \rh{i})
      +  \min\left\{0, J^n_1(x_{i+1/2})\right\} \!\!\left(f(\rh{i+1}) -f( \rh{i})\right)\!
      \right]
    \\
    = \
    & -\lambda_x \left(  \min\left\{0, \vs_1(x_{i+1/2})\right\}
      + \min\left\{0, J^n_1(x_{i+1/2})\right\} f' (r^n_{i+1/2}) \right),
  \end{align*}
  with $r^n_{i+1/2} \in \mathcal{I}\left(\rh{i},
    \rh{i+1}\right)$. Since $f' (r) \geq 0$ and
  by~\eqref{eq:CFLroe-v2} we get
  \begin{displaymath}
    \delta_i^n \in \left[0, \frac13\right].
  \end{displaymath}
  In a similar way one can prove that $\theta_i^n \in [0, 1/3]$. Thus,
  \begin{equation}
    \label{eq:AijOKroe}
    \sum_{i,j\in \interi} \modulo{\mathcal{A}_{i,j}^n}
    \leq
    \sum_{i,j \in \interi} \modulo{\rh{i+1,j} - \rh{i,j}}.
  \end{equation}

  We pass now to $\mathcal{B}_{i,j}^n$. Consider separately the terms
  involving $V_1$ and those involving $F$. Observe that the maps
  \begin{align*}
    x \mapsto &\min\left\{0, \vs_1 (x)\right\},
    &
      x \mapsto &\min\left\{0, J_1^n (x)\right\}
  \end{align*}
  are Lipschitz continuous, with constant respectively
  $\norma{\partial_x \vs_1}_{\L\infty}$ and
  $2 \, \epsilon \norma{\nabla^2 \eta}_{\L\infty}
  \norma{\rho_o}_{\L1}$.  Exploiting~\eqref{eq:fx1} we get:
  \begin{align}
    \nonumber
    & V_1 (\x{i+3/2}, \rh{i}, \rh{i+1})
      -V_1 (\x{i+1/2}, \rh{i-1}, \rh{i})
      +  V_1 (\x{i-1/2}, \rh{i-1}, \rh{i})
      -V_1 (\x{i+1/2}, \rh{i}, \rh{i+1})
    \\  \nonumber
    = \
    & \vs_1 (x_{i+3/2}) \rh{i} + \min\left\{0, \vs_1 (x_{i+3/2})\right\} (\rh{i+1} - \rh{i})
    \\  \nonumber
    &  - \vs_1 (x_{i+1/2}) \rh{i} - \min\left\{0, \vs_1 (x_{i+1/2})\right\} (\rh{i+1} - \rh{i})
    \\  \nonumber
    &   + \vs_1 (x_{i-1/2}) \rh{i-1} + \min\left\{0, \vs_1 (x_{i-1/2})\right\} (\rh{i} - \rh{i-1})
    \\ \nonumber
    & - \vs_1 (x_{i+1/2}) \rh{i-1} - \min\left\{0, \vs_1 (x_{i+1/2})\right\} (\rh{i} - \rh{i-1})
    \\ \nonumber
    & \pm \left(\vs_1 (x_{i-1/2}) - \vs_1 (x_{i+1/2})\right) \rh{i}
    \\ \nonumber
    = \
    &  \left(\vs_1 (x_{i+3/2})  -2 \, \vs_1 (x_{i+1/2}) + \vs_1 (x_{i-1/2})\right) \rh{i}
    \\ \nonumber
    & +  \left(\vs_1 (x_{i-1/2}) - \vs_1 (x_{i+1/2})\right)(\rh{i-1} - \rh{i})
    \\ \nonumber
    &+ \left(
      \min\left\{0, \vs_1 (x_{i+3/2})\right\} - \min\left\{0, \vs_1 (x_{i+1/2})\right\}
      \right)(\rh{i+1} - \rh{i})
    \\ \nonumber
    & +\left(
      \min\left\{0, \vs_1 (x_{i-1/2})\right\} -   \min\left\{0, \vs_1 (x_{i+1/2})\right\}
      \right)(\rh{i} - \rh{i-1})
    \\ \label{eq:B-V}
    \leq \
    & 2 \, (\dx)^2 \, \norma{\partial_{xx}^2 \vs_1}_{\L\infty} \modulo{\rh{i}}
      + \dx \norma{\partial_x \vs_1}_{\L\infty} \left(
      \modulo{\rh{i+1} - \rh{i}} + 2 \, \modulo{\rh{i} - \rh{i-1}}
      \right),
  \end{align}
  since
  \begin{align*}
    \vs_1 (x_{i+3/2})  -2 \, \vs_1 (x_{i+1/2}) + \vs_1 (x_{i-1/2}) = \
    &
      \dx \, \partial_x \vs_1 (\xi_{i+1}) - \dx \, \partial_x \vs_1 (\xi_{i})
    \\
    = \
    &
      \dx \, (\xi_{i+1} - \xi_i) \, \partial_{xx}^2\vs_1 (\zeta_{i+1/2}),
  \end{align*}
  with $\xi_i \in\, ]x_{i-1/2}, x_{i+1/2}[ $ and
  $\zeta_{i+1/2} \in \,]\xi_i, \xi_{i+1}[$.

  Similarly, exploiting~\eqref{eq:fx2} we obtain
  \begin{align}
    \nonumber
    & F (\rh{i}, \rh{i+1}, J^n_1 (x_{i+3/2}))
      -F (\rh{i-1}, \rh{i}, J^n_1 (x_{i+1/2}))
    \\
    \nonumber
    & +F (\rh{i-1}, \rh{i}, J^n_1 (x_{i-1/2}))
      -F (\rh{i}, \rh{i+1}, J^n_1 (x_{i+1/2}))
    \\ \nonumber
    = \
    & J^n_1 (x_{i+3/2}) f( \rh{i}) + \min\left\{0, J^n_1 (x_{i+3/2})\right\} (f(\rh{i+1}) - f(\rh{i}))
    \\  \nonumber
    &  - J^n_1 (x_{i+1/2}) f(\rh{i}) - \min\left\{0, J^n_1 (x_{i+1/2})\right\} (f(\rh{i+1}) - f(\rh{i}))
    \\  \nonumber
    &   + J^n_1 (x_{i-1/2}) f(\rh{i-1}) + \min\left\{0, J^n_1 (x_{i-1/2})\right\} (f(\rh{i}) - f(\rh{i-1}))
    \\ \nonumber
    & - J^n_1 (x_{i+1/2}) f(\rh{i-1}) - \min\left\{0, J^n_1 (x_{i+1/2})\right\} (f(\rh{i}) - f(\rh{i-1}))
    \\ \nonumber
    & \pm \left(J^n_1 (x_{i-1/2}) - J^n_1 (x_{i+1/2})\right) f(\rh{i})
    \\ \nonumber
    = \
    &  \left(J^n_1 (x_{i+3/2})  -2 \, J^n_1 (x_{i+1/2}) + J^n_1 (x_{i-1/2})\right) f(\rh{i})
    \\ \nonumber
    & +  \left(J^n_1 (x_{i-1/2}) - J^n_1 (x_{i+1/2})\right)(f(\rh{i-1}) - f(\rh{i}))
    \\ \nonumber
    &+ \left(
      \min\left\{0, J^n_1 (x_{i+3/2})\right\} - \min\left\{0, J^n_1 (x_{i+1/2})\right\}
      \right)(f(\rh{i+1}) - f(\rh{i}))
    \\ \nonumber
    & +\left(
      \min\left\{0, J^n_1 (x_{i-1/2})\right\} -   \min\left\{0, J^n_1 (x_{i+1/2})\right\}
      \right)(f(\rh{i}) - f(\rh{i-1}))
    \\ \label{eq:B-F}
    \leq \
    & 2 \, \epsilon \, (\dx)^2 \, C \, \modulo{\rh{i}}
      + 2 \, \epsilon \,  L_f \, \dx \norma{\nabla^2 \eta}_{\L\infty} \norma{\rho_o}_{\L1} \left(
      \modulo{\rh{i+1} - \rh{i}} + 2 \, \modulo{\rh{i} - \rh{i-1}}
      \right),
  \end{align}
  where we used the fact that $f(r) \leq r$, \eqref{eq:Jx} and~\eqref{eq:Jtripla}, with the
  notation~\eqref{eq:c12}.  Collecting together~\eqref{eq:B-V}
  and~\eqref{eq:B-F} we therefore obtain
  \begin{align*}
    \modulo{\mathcal{B}_{i,j}^n}\leq \
    &  2 \, (\dx)^2 \left(
      \norma{\partial_{xx}^2 \vs_1}_{\L\infty}
      + \epsilon\, C
      \right)\modulo{\rh{i}}
    \\
    &   + \dx  \left(
      \norma{\partial_x \vs_1}_{\L\infty}
      + 2 \, \epsilon \, L_f \norma{\nabla^2 \eta}_{\L\infty} \norma{\rho_o}_{\L1}
      \right)
      \left(
      \modulo{\rh{i+1} - \rh{i}} + 2 \, \modulo{\rh{i} - \rh{i-1}}
      \right),
  \end{align*}
  so that
  \begin{align}
    \nonumber
    \sum_{i,j \in \interi} \lambda_x   \, \modulo{\mathcal{B}_{i,j}^n}
    \leq \
    & 3 \, \dt  \left(
      \norma{\partial_x \vs_1}_{\L\infty}
      + 2 \, \epsilon \, L_f \norma{\nabla^2 \eta}_{\L\infty} \norma{\rho_o}_{\L1}
      \right) \sum_{i,j \in \interi} \modulo{\rh{i+1} -\rh{i}}
    \\ \label{eq:BijOKroe}
    &
      + 2 \, \dt  \left(
      \norma{\partial_{xx}^2 \vs_1}_{\L\infty}
      + \epsilon\, C
      \right) \dx \sum_{i,j \in \interi} \modulo{\rh{i}}.
  \end{align}
  Therefore, by~\eqref{eq:AijOKroe} and~\eqref{eq:BijOKroe}, using
  also Lemma~\ref{lem:L1roe}
  \begin{align}
    \nonumber
    & \sum_{i,j \in \interi} \dy \, \modulo{\rho_{i+1,j}^{n+1/2} - \rho_{i,j}^{n+1/2}}
    \\ \nonumber
    \leq \
    & \sum_{i,j \in \interi} \dy \left(\modulo{\mathcal{A}_{i,j}^n}
      + \lambda_x\, \modulo{\mathcal{B}_{i,j}^n}\right)
    \\
    \label{eq:BVdyroe}
    \leq \
    & \left[ 1+ 3 \, \dt  \left(
      \norma{\partial_x \vs_1}_{\L\infty}
      + 2 \, \epsilon \, L_f  \norma{\nabla^2 \eta}_{\L\infty} \norma{\rho_o}_{\L1}
      \right) \right]\sum_{i,j \in \interi} \dy \, \modulo{\rh{i+1} -\rh{i}}
    \\ \nonumber
    &
      + 2 \, \dt  \left[
      \norma{\partial_{xx}^2 \vs_1}_{\L\infty}
      + \epsilon \, C
      \right] \norma{\rho_o}_{\L1}.
  \end{align}

  Now pass to the term
  \begin{displaymath}
    \sum_{i,j \in \interi} \dx \, \modulo{\rho_{i,j+1}^{n+1/2} - \rho_{i,j}^{n+1/2}}.
  \end{displaymath}
  Fix $i,j\in\interi$ and exploit~\eqref{eq:scheme1roe} again
  to get
  \begin{align*}
    &\rho^{n+1/2}_{i,j+1} - \rho^{n+1/2}_{i,j}
    \\
    = \
    & \rh{i,j+1} - \rh{i,j}
      - \lambda_x \left[
      V_1 (\x{i+1/2,j+1}, \rh{i,j+1}, \rh{i+1,j+1})
      +F (\rh{i,j+1}, \rh{i+1,j+1}, J^n_1 (x_{i+1/2,j+1}))
      \right.
    \\
    & \qquad\qquad\qquad\quad
      -V_1 (\x{i-1/2,j+1}, \rh{i-1,j+1}, \rh{i,j+1})
      -F (\rh{i-1,j+1}, \rh{i,j+1}, J^n_1 (\x{i-1/2,j+1}))
    \\
    & \qquad\qquad\qquad\quad
      - V_1 (\x{i+1/2,j}, \rh{i,j}, \rh{i+1,j})
      - F (\rh{i,j}, \rh{i+1,j}, J^n_1 (x_{i+1/2,j}))
    \\
    & \qquad\qquad\qquad\quad
      \left.
      +  V_1 (\x{i-1/2,j}, \rh{i-1,j}, \rh{i,j})
      +F (\rh{i-1,j}, \rh{i,j}, J^n_1 (x_{i-1/2,j}))
      \right]
    \\
    & \pm \lambda_x \left[
      V_1 (\x{i+1/2,j+1}, \rh{i,j}, \rh{i+1,j})
      +F (\rh{i,j}, \rh{i+1,j}, J^n_1 (x_{i+1/2,j+1}))
      \right.
    \\
    &  \qquad\left.
      -V_1 (\x{i-1/2,j+1}, \rh{i-1,j}, \rh{i,j})
      -F (\rh{i-1,j}, \rh{i,j}, J^n_1 (x_{i-1/2,j+1}))\right]
    \\
    = \
    & \mathcal{D}_{i,j}^n + \lambda_x \, \mathcal{E}_{i,j}^n,
  \end{align*}
  where we set
  \begin{align*}
    & \mathcal{D}_{i,j}^n
    \\ = \
    & \rh{i,j+1} - \rh{i,j} -\lambda_x \left[
      V_1 (\x{i+1/2,j+1}, \rh{i,j+1}, \rh{i+1,j+1})
      + F (\rh{i,j+1}, \rh{i+1,j+1}, J_1^n (\x{i+1/2,j+1}))
      \right.
    \\ &  \qquad\qquad\qquad\qquad
         - V_1 (\x{i+1/2,j+1}, \rh{i,j}, \rh{i+1,j})
         - F ( \rh{i,j}, \rh{i+1,j}, J_1^n (\x{i+1/2,j+1}))
    \\
    &  \qquad\qquad\qquad\qquad
      +  V_1 (\x{i-1/2,j+1}, \rh{i-1,j}, \rh{i,j})
      + F( \rh{i-1,j}, \rh{i,j}, J_1^n (\x{i-1/2,j+1}))
    \\
    & \left. \qquad\qquad\qquad\qquad
      - V_1 (\x{i-1/2,j+1}, \rh{i-1,j+1}, \rh{i,j+1})
      - F(\rh{i-1,j+1}, \rh{i,j+1}, J_1^n(\x{i-1/2,j+1}))
      \right],
  \end{align*}
  \begin{align*}
    \mathcal{E}_{i,j}^n = \
    & V_1 (\x{i+1/2,j}, \rh{i,j}, \rh{i+1,j})
      + F (\rh{i,j}, \rh{i+1,j}, J_1^n (\x{i+1/2,j}))
    \\
    & - V_1 (\x{i+1/2,j+1}, \rh{i,j}, \rh{i+1,j})
      - F (\rh{i,j}, \rh{i+1,j}, J_1^n (\x{i+1/2,j+1}))
    \\
    &
      + V_1 (\x{i-1/2,j+1}, \rh{i-1,j}, \rh{i,j})
      + F (\rh{i-1,j}, \rh{i,j}, J_1^n (\x{i-1/2,j+1}))
    \\
    & - V_1 (\x{i-1/2,j}, \rh{i-1,j}, \rh{i,j})
      - F (\rh{i-1,j}, \rh{i,j}, J_1^n (\x{i-1/2,j})).
  \end{align*}
  Similarly as before, rearrange $\mathcal{D}_{i,j}^n$, exploiting the
  notation~\eqref{eq:10}:
  \begin{align*}
    \mathcal{D}_{i,j}^n
    = \
    & \rh{i,j+1} - \rh{i,j}
      - \lambda_x\left[
      H^n_{i+1/2,j+1} (\rh{i,j+1}, \rh{i+1,j+1})
      - H^n_{i+1/2,j+1} (\rh{i,j}, \rh{i+1,j})
      \right.
    \\
    & \left. \qquad\qquad\qquad\qquad + H^n_{i-1/2,j+1} (\rh{i-1,j}, \rh{i,j})
      - H^n_{i-1/2,j+1} (\rh{i-1,j+1}, \rh{i,j+1})
      \right]
    \\
    & \pm \, \lambda_x H^n_{i+1/2, j+1} (\rh{i,j}, \rh{i+1,j+1})
      \pm \, \lambda_x H^n_{i-1/2,j+1} (\rh{i-1,j}, \rh{i,j+1})
    \\
    = \
    & \rh{i,j+1} - \rh{i,j}
    \\
    & - \lambda_x \frac{H^n_{i+1/2,j+1} (\rh{i,j+1}, \rh{i+1,j+1})
      - H^n_{i+1/2,j+1} (\rh{i,j}, \rh{i+1,j+1}) }
      {\rh{i, j+1}-\rh{i,j}}(\rh{i, j+1}-\rh{i,j})
    \\
    & - \lambda_x \frac{H^n_{i+1/2,j+1} (\rh{i,j}, \rh{i+1,j+1})
      - H^n_{i+1/2,j+1} (\rh{i,j}, \rh{i+1,j}) }
      {\rh{i+1, j+1}-\rh{i+1,j}}(\rh{i+1, j+1}-\rh{i+1,j})
    \\
    & + \lambda_x \frac{H^n_{i-1/2,j+1} (\rh{i-1,j}, \rh{i,j+1})
      - H^n_{i-1/2,j+1} (\rh{i-1,j}, \rh{i,j}) }
      {\rh{i, j+1}-\rh{i,j}}(\rh{i, j+1}-\rh{i,j})
    \\
    & + \lambda_x \frac{H^n_{i-1/2,j+1} ( \rh{i-1,j+1}, \rh{i,j+1})
      - H^n_{i-1/2,j+1} ( \rh{i-1,j}, \rh{i,j+1}) }
      {\rh{i-1, j+1}-\rh{i-1,j}}(\rh{i-1, j+1}-\rh{i-1,j})
    \\
    = \
    & (1 - \kappa_{i,j}^n - \nu_{i,j}^n) (\rh{i, j+1}-\rh{i,j})
      + \nu_{i+1,j}^n (\rh{i+1, j+1}-\rh{i+1,j})
      + \kappa_{i-1,j}^n (\rh{i-1, j+1}-\rh{i-1,j}),
  \end{align*}
  where
  \begin{align*}
    \kappa_{i,j}^n = \ &
                         \begin{cases}
                           \lambda_x \, \dfrac{H^n_{i+1/2,j+1}
                             (\rh{i,j+1}, \rh{i+1,j+1}) -
                             H^n_{i+1/2,j+1}(\rh{i,j}, \rh{i+1,j+1}) }
                           {\rh{i, j+1}-\rh{i,j}} & \mbox{ if } \rh{i,
                             j+1} \neq \rh{i,j},
                           \\
                           0 & \mbox{ if } \rh{i, j+1} = \rh{i,j},
                         \end{cases}
    \\
    \nu_{i,j}^n = \ &
                      \begin{cases}
                        - \lambda_x \, \dfrac{H^n_{i-1/2,j+1}
                          (\rh{i-1,j}, \rh{i,j+1}) - H^n_{i-1/2,j+1}
                          (\rh{i-1,j}, \rh{i,j}) } {\rh{i,
                            j+1}-\rh{i,j}} & \mbox{ if } \rh{i, j+1}
                        \neq \rh{i,j},
                        \\
                        0 & \mbox{ if } \rh{i, j+1} = \rh{i,j}.
                      \end{cases}
  \end{align*}
  As for $\delta_i^n$~\eqref{eq:deltainroe} and
  $\theta_i^n$~\eqref{eq:thetainroe}, it is immediate to prove that
  $\kappa_{i,j}^n, \, \nu_{i,j}^n \in \left[0, \dfrac13\right]$ for
  all $i, \, j \in \interi$. Hence,
  \begin{equation}
    \label{eq:DijOKroe}
    \sum_{i,j \in \interi} \modulo{\mathcal{D}_{i,j}^n} \leq
    \sum_{i,j \in \interi} \modulo{\rh{i,j+1} - \rh{i,j}}.
  \end{equation}

  Pass now to $\mathcal{E}_{i,j}^n$: we can proceed analogously to
  $\mathcal{B}_{i,j}^n$, treating separately the terms involving $V_1$
  and those involving $F$. First, by~\eqref{eq:fx1},
  \begin{align}
    \nonumber
    & V_1 (\x{i+1/2,j}, \rh{i,j}, \rh{i+1,j})
      - V_1 (\x{i+1/2,j+1}, \rh{i,j}, \rh{i+1,j})
     \\ \nonumber
    & + V_1 (\x{i-1/2,j+1}, \rh{i-1,j}, \rh{i,j})
      - V_1 (\x{i-1/2,j}, \rh{i-1,j}, \rh{i,j})
    \\ \nonumber
    = \
    &
      \vs_1 (x_{i+1/2,j}) \rh{i,j} + \min\left\{0, \vs_1 (x_{i+1/2,j})\right\} (\rh{i+1,j} - \rh{i,j})
    \\ \nonumber
    &- \vs_1 (x_{i+1/2,j+1}) \rh{i,j} - \min\left\{0, \vs_1 (x_{i+1/2,j+1})\right\} (\rh{i+1,j} - \rh{i,j})
    \\ \nonumber
    & + \vs_1 (x_{i-1/2,j+1}) \rh{i-1,j} + \min\left\{0, \vs_1 (x_{i-1/2,j+1})\right\} (\rh{i,j} - \rh{i-1,j})
    \\ \nonumber
    & -\vs_1 (x_{i-1/2,j}) \rh{i-1,j} - \min\left\{0, \vs_1 (x_{i-1/2,j})\right\} (\rh{i,j} - \rh{i-1,j})
    \\ \nonumber
    & \pm\left( \vs_1 (x_{i-1/2,j+1}) - \vs_1 (x_{i-1/2,j})\right)\rh{i,j}
    \\ \nonumber
    = \
    & \left(  \vs_1 (x_{i+1/2,j})  -  \vs_1 (x_{i+1/2,j+1})  -  \vs_1 (x_{i-1/2,j}) + \vs_1 (x_{i-1/2,j+1})
      \right) \rh{i,j}
    \\ \nonumber
    & + \left( \vs_1 (x_{i-1/2,j+1}) - \vs_1 (x_{i-1/2,j})\right) (\rh{i-1,j} - \rh{i,j})
    \\ \nonumber
    & + \left(
       \min\left\{0, \vs_1 (x_{i+1/2,j})\right\}-\min\left\{0, \vs_1 (x_{i+1/2,j+1})\right\}
      \right) (\rh{i+1,j} - \rh{i,j})
    \\ \nonumber
    &+  \left(
      \min\left\{0, \vs_1 (x_{i-1/2,j+1})\right\} -  \min\left\{0, \vs_1 (x_{i-1/2,j})\right\}
      \right)(\rh{i,j} - \rh{i-1,j})
    \\ \label{eq:12}
    \leq \
    & \dx \, \dy \, \norma{\partial_{xy}^2 \vs_1}_{\L\infty} \modulo{\rh{i,j}}
      + \dy \, \norma{\partial_y \vs_1}_{\L\infty}
      \left( \modulo{\rh{i+1,j} - \rh{i,j}}+  2 \, \modulo{\rh{i,j} - \rh{i-1,j}}
      \right),
  \end{align}
  since
  \begin{align*}
    & \vs_1 (x_{i+1/2,j})  -  \vs_1 (x_{i+1/2,j+1})  -  \vs_1 (x_{i-1/2,j}) + \vs_1 (x_{i-1/2,j+1})
     \\
    = \
    & \dx \, \partial_x \vs_1 (\xi_i,y_j) - \dx \, \partial_x \vs_1 (\xi_i, y_{j+1})
    \\
    = \
    & - \dx \, \dy \, \partial^2_{xy} \vs_1 (\xi_i, \zeta_{j+1/2}),
  \end{align*}
  with $\xi_i \in \ ]x_{i-1/2}, x_{i+1/2}[$ and $\zeta_{j+1/2} \in \ ] y_j, y_{j+1}[$.
  In a similar way, by~\eqref{eq:fx2},
  \begin{align}
    \nonumber
    & F (\rh{i,j}, \rh{i+1,j}, J_1^n (\x{i+1/2,j}))
      - F (\rh{i,j}, \rh{i+1,j}, J_1^n (\x{i+1/2,j+1}))
    \\ \nonumber
    & + F (\rh{i-1,j}, \rh{i,j}, J_1^n (\x{i-1/2,j+1}))
      - F (\rh{i-1,j}, \rh{i,j}, J_1^n (\x{i-1/2,j}))
    \\ \nonumber
    =\
    & J_1^n (\x{i+1/2,j}) \, f (\rh{i,j})
      + \min\left\{0, J_1^n (\x{i+1/2,j})\right\} \left(f (\rh{i+1,j}) - f (\rh{i,j})\right)
    \\ \nonumber
    & - J_1^n (\x{i+1/2,j+1}) \, f (\rh{i,j})
      - \min\left\{0, J_1^n (\x{i+1/2,j+1})\right\} \left(f (\rh{i+1,j}) - f (\rh{i,j})\right)
    \\ \nonumber
    &+  J_1^n (\x{i-1/2,j+1}) \, f (\rh{i-1,j})
      + \min\left\{0, J_1^n (\x{i-1/2,j+1})\right\} \left(f (\rh{i,j}) - f (\rh{i-1,j})\right)
    \\ \nonumber
    & - J_1^n (\x{i-1/2,j}) \, f (\rh{i-1,j})
      - \min\left\{0, J_1^n (\x{i-1/2,j})\right\} \left(f (\rh{i,j}) - f (\rh{i-1,j})\right)
    \\ \nonumber
    & \pm \left( J_1^n (\x{i-1/2,j+1})  - J_1^n (\x{i-1/2,j}) \right) f (\rh{i,j})
    \\ \nonumber
    = \
    & \left(
      J_1^n (\x{i+1/2,j})   - J_1^n (\x{i+1/2,j+1})  - J_1^n (\x{i-1/2,j}) + J_1^n (\x{i-1/2,j+1})
      \right)f (\rh{i,j})
    \\ \nonumber
    & + \left( J_1^n (\x{i-1/2,j+1})  - J_1^n (\x{i-1/2,j}) \right) \left(f (\rh{i-1,j}) - f (\rh{i,j})\right)
    \\ \nonumber
    & + \left(
       \min\left\{0, J_1^n (\x{i+1/2,j})\right\}- \min\left\{0, J_1^n (\x{i+1/2,j+1})\right\}
      \right)\left(f (\rh{i+1,j}) - f (\rh{i,j})\right)
    \\ \nonumber
    & + \left(
      \min\left\{0, J_1^n (\x{i-1/2,j+1})\right\} - \min\left\{0, J_1^n (\x{i-1/2,j})\right\}
      \right)\left(f (\rh{i,j}) - f (\rh{i-1,j})\right)
    \\ \label{eq:13}
    \leq \
    & 2 \, \epsilon \, \dx \, \dy \, C \modulo{\rh{i}}
      + 2 \, \epsilon \, L_f \, \dy \, \norma{\nabla^2 \eta}_{\L\infty} \norma{\rho_o}_{\L1}
      \left(
      \modulo{ \rh{i+1,j} - \rh{i,j}}
      + 2 \modulo{\rh{i,j} + \rh{i-1,j}}
      \right),
  \end{align}
  where we used the fact that $f(r) \leq r$, \eqref{eq:J1y}
  and~\eqref{eq:Jtriplabis}, with the notation~\eqref{eq:c12}.
  Therefore, collecting together~\eqref{eq:12} and~\eqref{eq:13}, we
  get
  \begin{align*}
    \modulo{\mathcal{E}_{i,j}^n} \leq \
    & \dx \, \dy \left( \norma{\partial_{xy}^2 \vs_1}_{\L\infty}
      +  2 \, \epsilon \,C \right) \modulo{\rh{i}}
    \\
    & +\dy \left(
      \norma{\partial_y \vs_1}_{\L\infty}
      + 2 \, \epsilon \, L_f \norma{\nabla^2 \eta}_{\L\infty}\norma{\rho_o}_{\L1}
      \right)
      \left(
       \modulo{ \rh{i+1,j} - \rh{i,j}}
      + 2 \modulo{\rh{i,j} + \rh{i-1,j}}
      \right),
  \end{align*}
  so that
  \begin{align}
    \nonumber
    \sum_{i,j \in \interi} \lambda_x  \modulo{\mathcal{E}_{i,j}^n}
    \leq \
    &  3 \, \lambda_x \, \dy \left(
      \norma{\partial_y \vs_1}_{\L\infty}
      + 2 \, \epsilon \, L_f \norma{\nabla^2 \eta}_{\L\infty}\norma{\rho_o}_{\L1}
      \right)
    \sum_{i,j \in \interi} \modulo{\rh{i+1,j} - \rh{i,j}}
    \\
    \label{eq:EijOKroe}
    & + \dt   \left( \norma{\partial_{xy}^2 \vs_1}_{\L\infty}
      +  2 \, \epsilon \,C \right)  \, \dy  \sum_{i,j \in \interi} \rh{i,j}.
  \end{align}
  Hence, by~\eqref{eq:DijOKroe} and~\eqref{eq:EijOKroe}, using also
  Lemma~\ref{lem:L1roe}, we obtain
  \begin{align}
    \nonumber
    & \sum_{i,j \in \interi} \dx \, \modulo{\rho^{n+1/2}_{i,j+1} - \rho^{n+1/2}_{i,j}}
    \\\nonumber
    \leq \
    & \sum_{i,j \in \interi} \dx \left( \modulo{\mathcal{D}_{i,j}^n}
      + \lambda_x \modulo{\mathcal{E}_{i,j}^n}\right)
    \\
    \nonumber
    \leq \
    & \sum_{i,j \in \interi} \dx \, \modulo{\rh{i,j+1} - \rh{i,j}}
    \\
    \label{eq:BVdxroe}
    & + 3 \, \dt  \left(
      \norma{\partial_y \vs_1}_{\L\infty}
      +  2\, \epsilon  \, L_f \, \norma{\nabla^2 \eta}_{\L\infty} \norma{\rho_o}_{\L1}
      \right)
      \sum_{i,j \in \interi} \dy \, \modulo{\rh{i+1,j} - \rh{i,j}}
    \\
    \nonumber
    & + \dt \left(
      \norma{\partial_{xy}^2 \vs_1}_{\L\infty} + 2 \,  \epsilon  \, C  \right) \norma{\rho_o}_{\L1}.
  \end{align}
  Setting
  \begin{align}
    \label{eq:K1roe}
    K_1 = \
    & 3 \left(\norma{\partial_x \vs_1}_{\L\infty}
      + \norma{\partial_y \vs_1}_{\L\infty}
      + 4 \, \epsilon \, L_f \,
      \norma{\nabla^2 \eta}_{\L\infty} \norma{\rho_o}_{\L1}\right) ,
    \\
    \label{eq:K2roe}
    K_2 = \
    &  \left( 4 \, \epsilon   \, C
      + 2 \, \norma{\partial_{xx}^2 \vs_1}_{\L\infty} + \norma{\partial_{xy}^2 \vs_1}_{\L\infty}
      \right)
      \norma{\rho_o}_{\L1},
  \end{align}
  by~\eqref{eq:BVdyroe} and~\eqref{eq:BVdxroe} we conclude
  \begin{align*}
    & \sum_{i,j \in \interi} \left(
      \dy \, \modulo{\rho^{n+1/2}_{i+1,j} - \rho^{n+1/2}_{i,j}}
      +
      \dx \, \modulo{\rho^{n+1/2}_{i,j+1} - \rho^{n+1/2}_{i,j}}
      \right)
    \\
    \leq \
    & \left( 1 + \dt  K_1 \right)
      \sum_{i,j \in \interi} \left(
      \dx \, \modulo{\rh{i,j+1} - \rh{i,j}} +  \dy \, \modulo{\rh{i+1,j} -\rh{i,j}}
      \right)
      +  \dt \, K_2.
  \end{align*}
  Analogous computations yield
  \begin{align*}
    & \sum_{i,j \in \interi} \left(
      \dy \, \modulo{\rho^{n+1}_{i+1,j} - \rho^{n+1}_{i,j}}
      +
      \dx \, \modulo{\rho^{n+1}_{i,j+1} - \rho^{n+1}_{i,j}}
      \right)
    \\
    \leq \
    & \left( 1 + \dt \, K_3 \right)
      \sum_{i,j \in \interi} \left(
      \dx \, \modulo{\rho^{n+1/2}_{i,j+1} - \rho^{n+1/2}_{i,j}}
      +  \dy \, \modulo{\rho^{n+1/2}_{i+1,j} -\rho^{n+1/2}_{i,j}}
      \right)
      +  \dt \, K_4,
  \end{align*}
  where
  \begin{align}
    \label{eq:K3roe }
    K_3 = \
    & 3 \left( \norma{\partial_x \vs_2}_{\L\infty}
      +  \norma{\partial_y \vs_2}_{\L\infty}
      + 4 \, \epsilon \, L_f \,
      \norma{\nabla^2 \eta}_{\L\infty} \norma{\rho_o}_{\L1}\right),
    \\
    \label{eq:K4roe}
    K_4 = \
    &  \left(
      4 \, \epsilon  \, C
      + 2 \, \norma{\partial_{yy}^2 \vs_2}_{\L\infty} + \norma{\partial_{xy}^2 \vs_2}_{\L\infty}
      \right) \norma{\rho_o}_{\L1}.
  \end{align}
  Observe that, using the notation~\eqref{eq:K1defroe}
  and~\eqref{eq:K2defroe},
  \begin{align*}
    K_1, \, K_3 \leq \mathcal{K}_1,
    \qquad
    K_2, \, K_4 \leq \mathcal{K}_2.
  \end{align*}
  A recursive argument yields the desired result:
  \begin{align*}
    & \sum_{i,j \in \interi} \left(
      \dy \, \modulo{\rho^{n}_{i+1,j} - \rho^{n}_{i,j}}
      +
      \dx \, \modulo{\rho^{n}_{i,j+1} - \rho^{n}_{i,j}}
      \right)
    \\
    \leq \
    & e^{2 \, n \, \dt \, \mathcal{K}_1}  \sum_{i,j \in \interi} \left(
      \dx \, \modulo{\rho^0_{i,j+1} - \rho^0_{i,j}}
      +  \dy \, \modulo{\rho^0_{i+1,j} -\rho^0_{i,j}}
      \right)
      +  \frac{2 \, \mathcal{K}_2}{\mathcal{K}_1}\left(e^{2 \, n \, \dt \, \mathcal{K}_1} -1
      \right).
  \end{align*}
\end{proof}

\begin{corollary} {\bf ($\BV$ estimate in space and time)}
  \label{cor:bvxtroe}
  Let $\rho_o \in (\L\infty \cap \BV) (\reali^2;
  \reali^+)$. Let~\ref{vs}, \ref{H}, \ref{eta}, \eqref{eq:CFLroe-v2}
  hold. Then, for all $t>0$, $\rho_\Delta$ in~\eqref{eq:rhodelta}
  constructed through Algorithm~\ref{alg:2} satisfies the following
  estimate: for all $n=1, \ldots, N_T$,
  \begin{equation}
    \label{eq:bvxt}
    \sum_{m=0}^{n-1} \sum_{i,j \in \interi}
    \dt \left(
      \dy \, \modulo{\rho^m_{i+1,j} - \rho^m_{i,j}}
      + \dx \, \modulo{\rho^m_{i,j+1} - \rho^m_{i,j}}\right)
    +
    \sum_{m=0}^{n-1} \sum_{i,j \in \interi}
    \dx \, \dy\,  \modulo{\rho^{m+1}_{i,j} - \rho^m_{i,j}}
    \leq \mathcal{C}_{xt}(t^n),
  \end{equation}
  where
  \begin{equation}
    \label{eq:Cxt}
    \mathcal{C}_{xt} (t) = t \left(\mathcal{C}_x (t) + 2 \, \mathcal{C}_t (t)\right),
  \end{equation}
  with $\mathcal{C}_x$ as in~\eqref{eq:Cx} and $\mathcal{C}_t$ as in~\eqref{eq:Ct}.
\end{corollary}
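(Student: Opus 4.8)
The plan is to split the left-hand side of~\eqref{eq:bvxt} into its two natural blocks and estimate them separately, invoking Proposition~\ref{prop:bvroe} for the purely spatial block and a one-step $\L1$-in-time continuity estimate for the temporal one. The spatial block $\sum_{m=0}^{n-1}\dt\sum_{i,j\in\interi}(\dy\,\modulo{\rho^m_{i+1,j}-\rho^m_{i,j}}+\dx\,\modulo{\rho^m_{i,j+1}-\rho^m_{i,j}})$ is immediate: by Proposition~\ref{prop:bvroe}, for each $m=0,\ldots,n-1$ the inner double sum is at most $\mathcal{C}_x(t^m)$, and since $\mathcal{C}_x$ in~\eqref{eq:Cx} is nondecreasing in $t$ we get $\mathcal{C}_x(t^m)\le\mathcal{C}_x(t^n)$; summing the $\dt$ factors over $m$ produces $n\,\dt=t^n$, so this block is bounded by $t^n\,\mathcal{C}_x(t^n)$. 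The temporal block $\sum_{m=0}^{n-1}\sum_{i,j\in\interi}\dx\,\dy\,\modulo{\rho^{m+1}_{i,j}-\rho^m_{i,j}}$ requires the per-step estimate $\sum_{i,j\in\interi}\dx\,\dy\,\modulo{\rho^{m+1}_{i,j}-\rho^m_{i,j}}\le 2\,\mathcal{C}_t(t^{m+1})\,\dt$, with $\mathcal{C}_t$ as in~\eqref{eq:Ct} (this is the discrete modulus of continuity in time that also underlies the last bound of Theorem~\ref{thm:main}); establishing it is where the actual work lies.

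To prove the per-step estimate I would write $\rho^{m+1}_{i,j}-\rho^m_{i,j}=(\rho^{m+1}_{i,j}-\rho^{m+1/2}_{i,j})+(\rho^{m+1/2}_{i,j}-\rho^m_{i,j})$ and handle each fractional increment. By~\eqref{eq:scheme1roe} and the notation~\eqref{eq:10}, $\rho^{m+1/2}_{i,j}-\rho^m_{i,j}=-\lambda_x[H^m_{i+1/2,j}(\rho^m_{i,j},\rho^m_{i+1,j})-H^m_{i-1/2,j}(\rho^m_{i-1,j},\rho^m_{i,j})]$; rewriting this as $[H^m_{i+1/2,j}(\rho^m_{i,j},\rho^m_{i+1,j})-H^m_{i+1/2,j}(\rho^m_{i-1,j},\rho^m_{i,j})]+[H^m_{i+1/2,j}(\rho^m_{i-1,j},\rho^m_{i,j})-H^m_{i-1/2,j}(\rho^m_{i-1,j},\rho^m_{i,j})]$ isolates two effects. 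The first bracket is controlled by the Lipschitz continuity of $V_1$ and $F$ in their density arguments --- with constant $\norma{\vs_1}_{\L\infty}+\epsilon\,L_f$, exactly the bound already obtained for $\delta_i^n$ and $\theta_i^n$ in the proof of Proposition~\ref{prop:bvroe} --- hence by a multiple of $\modulo{\rho^m_{i,j}-\rho^m_{i-1,j}}+\modulo{\rho^m_{i+1,j}-\rho^m_{i,j}}$. The second bracket measures the change of the numerical flux in the spatial variable only, and is controlled by the Lipschitz continuity of $x\mapsto\min\{0,\vs_1(x)\}$ and $x\mapsto\min\{0,J_1^m(x)\}$, i.e.\ by a multiple of $\dx$ times $\modulo{\rho^m_{i,j}}$ and $f(\rho^m_{i,j})\le\rho^m_{i,j}$ --- the same mechanism that generated the $\partial_{xx}^2\vs_1$- and $C$-terms in~\eqref{eq:B-V}--\eqref{eq:B-F} through~\eqref{eq:Jx}, \eqref{eq:Jtripla} and the constants~\eqref{eq:c12}. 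Multiplying by $\lambda_x$ and summing over $i,j$ with the $\L1$ weight $\dx\,\dy$ then turns the first contribution into $\dt$ times a bounded multiple of the spatial total variation of $\rho^m$, which is $\le\mathcal{C}_x(t^m)$ by Proposition~\ref{prop:bvroe}, and the second into $\dt$ times a constant times $\norma{\rho^m}_{\L1}=\norma{\rho_o}_{\L1}$ by Lemma~\ref{lem:L1roe}. The $y$-step $\rho^{m+1}_{i,j}-\rho^{m+1/2}_{i,j}$ is treated identically from~\eqref{eq:scheme2roe}, now using the spatial total variation of $\rho^{m+1/2}$ already bounded inside the proof of Proposition~\ref{prop:bvroe} (see~\eqref{eq:BVdyroe}--\eqref{eq:BVdxroe}) together with the estimates~\eqref{eq:J1y}, \eqref{eq:Jtriplabis} for $J_2^m$. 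Adding the two halves yields $\sum_{i,j\in\interi}\dx\,\dy\,\modulo{\rho^{m+1}_{i,j}-\rho^m_{i,j}}\le 2\,\mathcal{C}_t(t^{m+1})\,\dt$, where $\mathcal{C}_t(t)$ is a constant multiple of $\mathcal{C}_x(t)$ plus lower-order terms proportional to $\norma{\rho_o}_{\L1}$, $\norma{\rho_o}_{\L1}^2$ and $\norma{\nabla^2\boldsymbol{\vs}}_{\L\infty}$; this is precisely~\eqref{eq:Ct}.

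It then remains to assemble the two blocks. Summing $\sum_{i,j\in\interi}\dx\,\dy\,\modulo{\rho^{m+1}_{i,j}-\rho^m_{i,j}}\le 2\,\mathcal{C}_t(t^{m+1})\,\dt\le 2\,\mathcal{C}_t(t^n)\,\dt$ over $m=0,\ldots,n-1$ (using that $\mathcal{C}_t$ is nondecreasing) bounds the temporal block by $2\,t^n\,\mathcal{C}_t(t^n)$; combined with the bound $t^n\,\mathcal{C}_x(t^n)$ for the spatial block this gives $t^n(\mathcal{C}_x(t^n)+2\,\mathcal{C}_t(t^n))=\mathcal{C}_{xt}(t^n)$, which is~\eqref{eq:bvxt}. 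The main obstacle is the per-step time-continuity estimate of the previous paragraph: one has to make sure that the ``position-dependence'' remainders, which come with an unweighted factor $\dx$ (respectively $\dy$) and are summed over the whole lattice $\interi^2$, actually collapse to finite quantities rather than diverging. This is exactly what the $\C2$-regularity of $\boldsymbol{\vs}$ (assumption~\ref{vs}), the $\W3\infty$-regularity of $\eta$ (assumption~\ref{eta}) --- through the bounds on $J_1^m$, $J_2^m$ --- and the exact $\L1$-mass conservation of Lemma~\ref{lem:L1roe} are there to provide; no strengthening of the CFL condition~\eqref{eq:CFLroe-v2} is needed, as it is already the one in force in Proposition~\ref{prop:bvroe}.
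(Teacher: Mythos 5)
Your proposal follows essentially the same route as the paper: bound the spatial block by Proposition~\ref{prop:bvroe} and monotonicity of $\mathcal{C}_x$, and obtain the temporal block from a per-(half-)step estimate of $\sum_{i,j}\dx\,\dy\,\modulo{\rho^{m+1/2}_{i,j}-\rho^m_{i,j}}$ via the Lipschitz dependence of the Roe fluxes on the density arguments (constant $\norma{\vs_1}_{\L\infty}+\epsilon L_f$, controlled by $\mathcal{C}_x$) and on the spatial position of $\vs_1$ and $J_1^m$ (controlled by $\norma{\rho_o}_{\L1}$ through Lemma~\ref{lem:L1roe}), exactly as in the paper's proof leading to~\eqref{eq:Ct}. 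One small inaccuracy that does not affect the argument: the time-continuity step only needs the first-difference bounds~\eqref{eq:Jinf}, \eqref{eq:Jx}, \eqref{eq:J1y} and $\norma{\nabla\boldsymbol{\vs}}_{\L\infty}$, not the second-difference estimates~\eqref{eq:Jtripla}--\eqref{eq:Jtriplabis} or $\norma{\nabla^2\boldsymbol{\vs}}_{\L\infty}$, which indeed do not appear in~\eqref{eq:Ct}.
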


\begin{proof}
  By Proposition~\ref{prop:bvroe} we have
  \begin{equation}
    \label{eq:9r}
    \sum_{m=0}^{n-1} \sum_{i,j \in \interi}
    \dt \left(
      \dy \, \modulo{\rho^m_{i+1,j} - \rho^m_{i,j}}
      + \dx \, \modulo{\rho^m_{i,j+1} - \rho^m_{i,j}}\right)
    \leq
    n \, \dt \, \mathcal{C}_x (n \, \dt).
  \end{equation}
  Since
  \begin{displaymath}
    \modulo{\rho^{m+1}_{i,j} - \rho^m_{i,j}}
    \leq
     \modulo{\rho^{m+1}_{i,j} - \rho^{m+1/2}{i,j}}
     +
     \modulo{\rho^{m+1/2}_{i,j} - \rho^m_{i,j}},
  \end{displaymath}
  we focus first on
  \begin{displaymath}
    \sum_{i,j \in \interi}
    \dx \, \dy \,\modulo{\rho^{m+1/2}_{i,j} - \rho^m_{i,j}}.
  \end{displaymath}
  By the scheme~\eqref{eq:scheme1roe}, we have, using the notation~\eqref{eq:notvJ},
  \begin{align*}
    \rho^{m+1/2}_{i,j} - \rho^m_{i,j} \leq \
    & \lambda_x \left[
      V_1 (x_{i+1/2,j}, \rho^m_{i,j}, \rho^m_{i+1,j}) + F (\rho^m_{i,j}, \rho^m_{i+1,j}, J^m_1 (x_{i+1/2,j}))\right.
    \\
    & \qquad \left.
      - V_1 (x_{i-1/2,j}, \rho^m_{i-1,j}, \rho^m_{i,j}) - F (\rho^m_{i-1,j}, \rho^m_{i,j}, J^m_1 (x_{i-1/2,j}))
      \right]
    \\
    = \
    & \lambda_x \left[
      v_{i+1/2,j} \, \rho^m_{i,j} + \min\left\{0, v_{i+1/2,j}\right\} (\rho^m_{i+1,j} - \rho^m_{i,j})\right.
    \\
    & -  v_{i-1/2,j} \, \rho^m_{i-1,j} - \min\left\{0, v_{i-1/2,j}\right\} (\rho^m_{i,j} - \rho^m_{i-1,j})
    \\
    & + J_1^m(x_{i+1/2,j}) \,f( \rho^m_{i,j} )+ \min\left\{0, J^m_1(x_{i+1/2,j})\right\}
      (f(\rho^m_{i+1,j}) - f(\rho^m_{i,j}))
    \\
    & -  J^m_1(x_{i-1/2,j}) \, f(\rho^m_{i-1,j}) - \min\left\{0, J^m_1(x_{i-1/2,j})\right\}
      (f(\rho^m_{i,j}) - f(\rho^m_{i-1,j}))
    \\
    & \left.
      \pm v_{i-1/2,j} \, \rho^m_{i,j} \pm J^m_1(x_{i-1/2,j})\, f (\rho^m_{i,j})
      \right]
    \\
    = \
    & \lambda_x \left[
      \dx \, \partial_x \vs_1 (\xi_i, y_j) \, \rho^m_{i,j}
      + \left(v_{i-1/2,j} -  \min\left\{0, v_{i-1/2,j}\right\} \right)(\rho^m_{i,j} - \rho^m_{i-1,j})
      \right.
    \\
    & + \min\left\{0, v_{i+1/2,j}\right\} (\rho^m_{i+1,j} - \rho^m_{i,j})
      + \left( J_1^m(x_{i+1/2,j}) - J^m_1(x_{i-1/2,j})\right)\, f (\rho^m_{i,j})
    \\
    & + \left(
      J^m_1(x_{i-1/2,j}) - \min\left\{0, J^m_1(x_{i-1/2,j})\right\}
      \right) (f(\rho^m_{i,j}) - f(\rho^m_{i-1,j}))
      \\
    & \left.+ \min\left\{0, J^m_1(x_{i+1/2,j})\right\}
      (f(\rho^m_{i+1,j}) - f(\rho^m_{i,j}))\right]
    \\
    \leq \
    & \dt \left(
      \norma{\partial_x \vs_1}_{\L\infty}
      + 2 \, \epsilon \norma{\nabla^2 \eta}_{\L\infty} \norma{\rho_o}_{\L1}
      \right) \rho^m_{i,j}
    \\
    & + \lambda_x \left( \norma{\vs_1}_{\L\infty} + \epsilon \, L_f \right)
      \left( \modulo{\rho^m_{i,j} - \rho^m_{i-1,j}} + \modulo{\rho^m_{i+1,j} -\rho^m_{i,j}}\right),
  \end{align*}
  where $\xi_i \in \ ]x_{i-1/2}, x_{i+1/2}[$ and we used $f(r) \leq r$,
  \eqref{eq:Jinf} and~\eqref{eq:Jx}.  Therefore,
  \begin{align*}
     \sum_{i,j \in \interi}
    \dx \, \dy \, \modulo{\rho^{m+1/2}_{i,j} - \rho^m_{i,j}} \leq \
    & 2 \, \dt \,  \left(\norma{\vs_1}_{\L\infty} + \epsilon \, L_f  \right)
    \sum_{i,j \in \interi} \dy \, \modulo{\rho^m_{i+1,j} - \rho^m_{i,j}}
    \\
    & + \dt \left(
      \norma{\partial_x \vs_1}_{\L\infty}
      + 2 \, \epsilon  \norma{\nabla^2 \eta}_{\L\infty} \norma{\rho_o}_{\L1}
    \right) \norma{\rho_o}_{\L1}
    \\
    \leq \
    &   2 \, \dt \,  \left( \norma{\vs_1}_{\L\infty} + \epsilon \, L_f  \right)
    \mathcal{C}_x (m \, \dt)
    \\
     & + \dt \left(
      \norma{\partial_x \vs_1}_{\L\infty}
      + 2\, \epsilon  \norma{\nabla^2 \eta}_{\L\infty} \norma{\rho_o}_{\L1}
    \right) \norma{\rho_o}_{\L1}
    \\
    \leq \
    & \dt \, \mathcal{C}_t (m \, \dt),
  \end{align*}
  where we set
  \begin{equation}
    \label{eq:Ct}
    \mathcal{C}_t (s) =
    2 \,\left(\norma{\boldsymbol{\vs}}_{\L\infty}
      + \epsilon \, L_f  \right)\mathcal{C}_x (s)
    +  \left(
      \norma{\nabla \boldsymbol{\vs}}_{\L\infty}\!\!
      + 2 \,\epsilon  \norma{\nabla^2 \eta}_{\L\infty} \norma{\rho_o}_{\L1}
    \right) \norma{\rho_o}_{\L1}.
  \end{equation}
  Analogously, we get
  \begin{align*}
    \sum_{i,j \in \interi}
    \dx \, \dy \, \modulo{\rho^{m+1}_{i,j} - \rho^{m+1/2}_{i,j}} \leq \
    & 2\,  \dt \,  \left( \norma{\vs_2}_{\L\infty} + \epsilon \, L_f  \right)
    \sum_{i,j \in \interi} \dx \, \modulo{\rho^{m+1/2}_{i,j+1} - \rho^{m+1/2}_{i,j}}
    \\
    & + \dt \left(
      \norma{\partial_y \vs_2}_{\L\infty}
      + 2\,\epsilon \norma{\nabla^2 \eta}_{\L\infty} \norma{\rho_o}_{\L1}
    \right) \norma{\rho_o}_{\L1}
    \\
    \leq \
    &2\,   \dt \,  \left(\norma{\vs_2}_{\L\infty} + \epsilon \, L_f  \right)
    \mathcal{C}_x (m \, \dt)
    \\
     & +  \dt \left(
      \norma{\partial_y \vs_2}_{\L\infty}
      + 2\, \epsilon  \norma{\nabla^2 \eta}_{\L\infty} \norma{\rho_o}_{\L1}
    \right) \norma{\rho_o}_{\L1}
    \\
    \leq \
    & \dt \, \mathcal{C}_t (m \, \dt).
  \end{align*}
  Hence
  \begin{equation}
    \label{eq:8r}
    \sum_{m=0}^{n-1} \sum_{i,j \in \interi}
    \dx \, \dy \, \modulo{\rho^{m+1}_{i,j} - \rho^m_{i,j}}
    \leq
     2 \, n \, \dt \, \mathcal{C}_t (n \, \dt),
  \end{equation}
  which together with~\eqref{eq:9r} completes the proof.
\end{proof}


\subsection{Discrete entropy inequalities}
\label{sec:dei}
Following~\cite{ACG2015}, see also~\cite{CMfs, CMmonotone}, introduce
the following notation: for $i,j \in \interi$, $n=0, \ldots, N_T-1$
and $\kappa \in \reali$,
\begin{align*}
  \Phi_{i+1/2,j}^n (u,v) = \
  &  V_1(x_{i+1/2,j},u \vee \kappa, v \vee \kappa)
    +  F (u \vee \kappa, v \vee \kappa, J^n_1 (x_{i+1/2,j}))
  \\
  & - V_1(x_{i+1/2,j}, u \wedge \kappa, v \wedge \kappa)
    - F(u \wedge \kappa, v \wedge \kappa, J^n_1 (x_{i+1/2,j})),
  \\
  \Gamma_{i,j+1/2}^n (u,v) = \
  &  V_2 (x_{i,j+1/2},u \vee \kappa, v \vee \kappa)
    + F (u \vee \kappa, v \vee \kappa, J^n_2 (x_{i,j+1/2}))
  \\
  &
    - V_2(x_{i,j+1/2}, u \wedge \kappa, v \wedge \kappa)
    - F(u \wedge \kappa, v \wedge \kappa, J^n_2 (x_{i,j+1/2})),
\end{align*}
with $V_1$, $V_2$ and $F$ defined as in~\eqref{eq:fx1}, \eqref{eq:gx1}
and~\eqref{eq:fx2} respectively.

\begin{lemma} {\bf (Discrete entropy condition)}
  \label{lem:die}
  Fix $\rho_o \in (\L\infty \cap \BV) (\reali^2;
  \reali^+)$. Let~\ref{vs}, \ref{H}, \ref{eta}, \eqref{eq:CFLroe-v2}
  hold. Then, the solution $\rho_\Delta$ in~\eqref{eq:rhodelta}
  constructed through Algorithm~\ref{alg:2} satisfies the following
  discrete entropy inequality: for $i,j \in \interi$, for
  $n=0, \ldots, N_T-1$ and $\kappa \in \reali$,
  \begin{align*}
    \modulo{\rho^{n+1}_{i,j} - \kappa} - \modulo{\rh{i,j} - \kappa}
    + \lambda_x \, \left(
    \Phi^n_{i+1/2,j}(\rh{i,j},\rh{i+1,j}) - \Phi^n_{i-1/2,j}(\rh{i-1,j},\rh{i,j})
    \right)
    &
    \\
    + \lambda_x \, \sgn(\rho^{n+1/2}_{i,j} - \kappa)
    \left(\vs_1(x_{i+1/2,j}) - \vs_1(x_{i-1/2,j}) \right) \kappa
    &
    \\
    + \lambda_x \, \sgn(\rho^{n+1/2}_{i,j} - \kappa)
    \left( J_1^n(x_{i+1/2,j})  - J_1^n(x_{i-1/2,j}) \right) f(\kappa)
    &
    \\
    + \lambda_y \, \left(
    \Gamma^n_{i,j+1/2}(\rho^{n+1/2}_{i,j},\rho^{n+1/2}_{i,j+1}) - \Gamma^n_{i,j-1/2}(\rho^{n+1/2}_{i,j-1},\rho^{n+1/2}_{i,j})
    \right)
    &
    \\
    + \lambda_y \, \sgn(\rho^{n+1}_{i,j} - \kappa)
    \left(\vs_2(x_{i,j+1/2}) - \vs_2(x_{i,j-1/2}) \right) \kappa
    &
    \\
    + \lambda_y \, \sgn(\rho^{n+1}_{i,j} - \kappa)
    \left( J_2^n(x_{i,j+1/2})  - J_2^n(x_{i,j-1/2}) \right) f(\kappa)
    & \leq 0.
  \end{align*}
\end{lemma}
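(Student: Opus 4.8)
The plan is to run the classical Crandall--Majda argument for monotone schemes on each of the two fractional steps of Algorithm~\ref{alg:2} separately, and then add the two resulting inequalities. First I would recast the $x$-update~\eqref{eq:scheme1roe}, using the notation~\eqref{eq:10}, as the three-point formula $\rho^{n+1/2}_{i,j} = \mathcal{H}^n_{i,j}(\rh{i-1,j},\rh{i,j},\rh{i+1,j})$, where
\begin{displaymath}
  \mathcal{H}^n_{i,j}(u,v,w) = v - \lambda_x\left[H^n_{i+1/2,j}(v,w) - H^n_{i-1/2,j}(u,v)\right],
\end{displaymath}
and check that $\mathcal{H}^n_{i,j}$ is non-decreasing in each of $u$, $v$, $w$ under the CFL condition~\eqref{eq:CFLroe-v2}. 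By~\eqref{eq:fx1}--\eqref{eq:fx2}, the derivatives of $V_1$ and $F$ with respect to their first density slot are $\max\{0,\vs_1\}$ and $\max\{0,J^n_1\}\,f'$ (both $\ge0$, using $f'\ge0$ as already exploited in Proposition~\ref{prop:bvroe}), while those with respect to the second slot are $\min\{0,\vs_1\}$ and $\min\{0,J^n_1\}\,f'$ (both $\le0$). Hence $\partial_u\mathcal{H}^n_{i,j}\ge0$ and $\partial_w\mathcal{H}^n_{i,j}\ge0$, and, using $\modulo{J^n_1}\le\epsilon$ (the structure of $\boldsymbol{I}$ in~\eqref{eq:vdyn}) together with $\norma{f'}_{\L\infty}=L_f$, we get $\partial_v\mathcal{H}^n_{i,j}\ge 1 - 2\lambda_x\bigl(\norma{\vs_1}_{\L\infty}+\epsilon\,L_f\bigr)\ge 1/3 > 0$.

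Next I would invoke the Crandall--Majda inequality: for a scheme non-decreasing in each argument, $\mathcal{H}^n_{i,j}(a\vee\kappa,b\vee\kappa,c\vee\kappa)\ge\mathcal{H}^n_{i,j}(a,b,c)\vee\mathcal{H}^n_{i,j}(\kappa,\kappa,\kappa)$ and the reverse inequality holds with $\wedge$ in place of $\vee$. Subtracting these and using $p\vee q - p\wedge q=\modulo{p-q}$, with $(a,b,c)=(\rh{i-1,j},\rh{i,j},\rh{i+1,j})$ so that $\mathcal{H}^n_{i,j}(a,b,c)=\rho^{n+1/2}_{i,j}$, gives
\begin{displaymath}
  \modulo{\rh{i,j}-\kappa} - \lambda_x\left(\Phi^n_{i+1/2,j}(\rh{i,j},\rh{i+1,j}) - \Phi^n_{i-1/2,j}(\rh{i-1,j},\rh{i,j})\right) \ge \modulo{\rho^{n+1/2}_{i,j}-\mathcal{H}^n_{i,j}(\kappa,\kappa,\kappa)},
\end{displaymath}
where the left-hand side is recognised, straight from the definition of $\Phi^n$, as $\mathcal{H}^n_{i,j}(a\vee\kappa,b\vee\kappa,c\vee\kappa)-\mathcal{H}^n_{i,j}(a\wedge\kappa,b\wedge\kappa,c\wedge\kappa)$. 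Since the Roe corrections vanish on the diagonal, $\mathcal{H}^n_{i,j}(\kappa,\kappa,\kappa) = \kappa - \lambda_x\bigl[(\vs_1(x_{i+1/2,j})-\vs_1(x_{i-1/2,j}))\,\kappa + (J^n_1(x_{i+1/2,j})-J^n_1(x_{i-1/2,j}))\,f(\kappa)\bigr]$; applying $\modulo{X+Z}\ge\modulo{X}+\sgn(X)\,Z$ with $X=\rho^{n+1/2}_{i,j}-\kappa$ and $Z=\kappa-\mathcal{H}^n_{i,j}(\kappa,\kappa,\kappa)$, and rearranging, produces exactly the first three lines of the asserted inequality.

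Then I would repeat the same two steps for the $y$-update~\eqref{eq:scheme2roe}, i.e.\ for the monotone three-point map $(u,v,w)\mapsto v - \lambda_y\bigl[G^n_{i,j+1/2}(v,w)-G^n_{i,j-1/2}(u,v)\bigr]$ with $G^n_{i,j+1/2}(u,w)=V_2(x_{i,j+1/2},u,w)+F(u,w,J^n_2(x_{i,j+1/2}))$, applied at $(\rho^{n+1/2}_{i,j-1},\rho^{n+1/2}_{i,j},\rho^{n+1/2}_{i,j+1})$; this yields the analogous inequality carrying $\Gamma^n$, $\vs_2$, $J^n_2$, $\lambda_y$, $\sgn(\rho^{n+1}_{i,j}-\kappa)$ and the difference $\modulo{\rho^{n+1}_{i,j}-\kappa}-\modulo{\rho^{n+1/2}_{i,j}-\kappa}$. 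Summing the two inequalities, the intermediate terms $\modulo{\rho^{n+1/2}_{i,j}-\kappa}$ cancel and the stated discrete entropy inequality follows.

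The main obstacle I expect is the monotonicity of each fractional step, and specifically the lower bound on $\partial_v\mathcal{H}^n_{i,j}$ (and its $y$-analogue): this is exactly what forces the stricter CFL condition~\eqref{eq:CFLroe-v2} rather than~\eqref{eq:CFLroe}, and it makes essential use of $\modulo{J^n_k}\le\epsilon$ and $\norma{f'}_{\L\infty}=L_f$. The only genuinely structural point is that, because $\vs_1$ and $J^n_1$ vary in space, $\mathcal{H}^n_{i,j}(\kappa,\kappa,\kappa)\ne\kappa$; this non-conservative correction has to be carried along with its sign, which is why one uses $\modulo{X+Z}\ge\modulo{X}+\sgn(X)\,Z$ instead of the plain reverse triangle inequality, and is precisely what generates the $\sgn(\cdot)$ source terms in the statement. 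Everything else is routine bookkeeping.
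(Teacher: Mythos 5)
Your proposal is correct and is precisely the argument the paper appeals to: the proof is omitted in the text and referred to the cited references (\cite{ACT2015}, \cite{ACG2015}), which carry out exactly this Crandall--Majda monotonicity computation on each fractional step under the CFL condition~\eqref{eq:CFLroe-v2}, with the $\sgn(\cdot)$ source terms arising, as you say, from the fact that $\mathcal{H}^n_{i,j}(\kappa,\kappa,\kappa)\neq\kappa$ when $\vs_1$ and $J^n_1$ vary in space. One caveat worth recording: you apply the $y$-step to $(\rho^{n+1/2}_{i,j-1},\rho^{n+1/2}_{i,j},\rho^{n+1/2}_{i,j+1})$, which matches the arguments of $\Gamma^n$ in the lemma but not the literal display~\eqref{eq:scheme2roe}, where $V_2$ and $F$ are written with $\rho^n$ arguments; this is an inconsistency internal to the paper, and your reading is the one under which the stated inequality follows.
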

\noindent The proof is omitted, being entirely analogous to that of~\cite[Proposition~2.8]{ACT2015}, see also~\cite[Lemma~2.8]{ACG2015}.

\section{Lipschitz continuous dependence on initial data}
\label{sec:lipdep}

\begin{proposition}
  \label{prop:lipdep}
  Fix $T>0$. Let~\ref{vs}, \ref{H} and \ref{eta} hold. Let
  $\rho_o, \, \sigma_o \in (\L\infty \cap \BV) (\reali^2;
  \reali^+)$. Call $\rho$ and $\sigma$ the corresponding solutions
  to~\eqref{eq:1}. Then the following estimate holds:
  \begin{displaymath}
     \norma{\rho (t) - \sigma (t)}_{\L1 (\reali^2)} \leq
     \norma{\rho_o - \sigma_o}_{\L1 (\reali^2)} \, e^{t \, A (t)}.
  \end{displaymath}
  with $A(t)$ defined in~\eqref{eq:As}.
\end{proposition}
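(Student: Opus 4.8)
The plan is to \emph{freeze the non-local coefficient} and then compare the two solutions by Kru\v zkov's doubling of variables. By Definition~\ref{def:sol}, each of $\rho$ and $\sigma$ is a Kru\v zkov entropy solution of a \emph{local} conservation law,
\begin{equation*}
  \del_t \rho + \nabla\cdot\bigl( \rho\, \boldsymbol{\vs}(\xv) + f(\rho)\, \boldsymbol{I}(\rho)(t,\xv)\bigr) = 0,
  \qquad
  \del_t \sigma + \nabla\cdot\bigl( \sigma\, \boldsymbol{\vs}(\xv) + f(\sigma)\, \boldsymbol{I}(\sigma)(t,\xv)\bigr) = 0,
\end{equation*}
with $f(r)=r\,H(r-\rho_{max})$, where the vector fields $(t,\xv)\mapsto\boldsymbol{I}(\rho(t))(\xv)$ and $(t,\xv)\mapsto\boldsymbol{I}(\sigma(t))(\xv)$ are now regarded as \emph{given} space--time coefficients. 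By Theorem~\ref{thm:main}, $\norma{\rho(t)}_{\L1}=\norma{\rho_o}_{\L1}$, $\norma{\sigma(t)}_{\L1}=\norma{\sigma_o}_{\L1}$, and $\rho,\sigma$ enjoy uniform $\L\infty$ and $\BV$ bounds on $[0,T]$. Since $\eta\in(\C3\cap\W3\infty)(\reali^2;\reali^+)$ and the map $z\mapsto z/\sqrt{1+\norma{z}^2}$, together with its Jacobian, is globally Lipschitz on $\reali^2$, Young's convolution inequality $\norma{\nabla^k(\eta*\f)}_{\L\infty}\le\norma{\nabla^k\eta}_{\L\infty}\,\norma{\f}_{\L1}$ gives, uniformly in $t\in[0,T]$, that $\boldsymbol{I}(\rho)$ and $\boldsymbol{I}(\sigma)$ are bounded by $\epsilon$ and Lipschitz in $\xv$ with constant controlled by $\epsilon$, $\norma{\eta}_{\W3\infty}$ and $\norma{\rho_o}_{\L1}$ (constants of the same type as the $c_1,c_2$ of~\eqref{eq:c12}); and, most importantly,
\begin{equation*}
  \norma{\bigl(\boldsymbol{I}(\rho)-\boldsymbol{I}(\sigma)\bigr)(t)}_{\L\infty}
  \le \epsilon\,\norma{\nabla\eta}_{\L\infty}\,\norma{\rho(t)-\sigma(t)}_{\L1},
  \qquad
  \norma{\nabla\cdot\bigl(\boldsymbol{I}(\rho)-\boldsymbol{I}(\sigma)\bigr)(t)}_{\L\infty}
  \le C\,\epsilon\,\norma{\rho(t)-\sigma(t)}_{\L1},
\end{equation*}
with $C$ depending only on $\norma{\eta}_{\W3\infty}$, $\norma{\rho_o}_{\L1}$, $\norma{\sigma_o}_{\L1}$. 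In other words, the discrepancy between the two frozen fluxes is controlled, in the relevant norms, by $\norma{\rho(t)-\sigma(t)}_{\L1}$ itself.

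Next I would run the by now standard doubling of variables for Kru\v zkov solutions of conservation laws with space--time dependent flux that is merely Lipschitz in $u$ --- the finiteness of $L_f=\norma{f'}_{\L\infty}$, guaranteed by assumption~\ref{H}, being exactly what makes the $u$--derivatives of the flux legitimate. Writing the entropy inequalities for $\rho(t,\xv)$ and $\sigma(s,\yv)$ tested against $\psi\bigl(\tfrac{\xv+\yv}{2}\bigr)\,\omega_h(t-s)\,\omega_h(\xv-\yv)$, adding them, letting $h\to0$ and then $\psi\uparrow 1$, one adds and subtracts $f(\cdot)\,\boldsymbol{I}(\sigma)$ so as to separate the ``common-coefficient'' part (which yields the usual Kru\v zkov cancellations) from the coefficient discrepancy. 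The outcome is a differential inequality
\begin{equation*}
  \frac{\d{}}{\d t}\norma{\rho(t)-\sigma(t)}_{\L1}
  \le \bigl(\mathcal{L}_1(t)+\mathcal{L}_2(t)\bigr)\,\norma{\rho(t)-\sigma(t)}_{\L1},
\end{equation*}
where $\mathcal{L}_1(t)$ gathers the common-coefficient contributions --- essentially $\norma{\nabla\boldsymbol{\vs}}_{\L\infty}$, $L_f\,\norma{\nabla\cdot\boldsymbol{I}(\rho)(t)}_{\L\infty}$ and the Lipschitz-in-$\xv$ constant of $\boldsymbol{I}(\rho)$ --- while $\mathcal{L}_2(t)$ comes from the single extra term $\int_{\reali^2}\modulo{\nabla\cdot\bigl[f(\sigma(t,\xv))\,(\boldsymbol{I}(\rho)-\boldsymbol{I}(\sigma))(t,\xv)\bigr]}\,\d\xv$. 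Expanding it by the Leibniz rule and using $\modulo{f(\sigma)}\le L_f\modulo{\sigma}$, $\tv\bigl(f(\sigma(t))\bigr)\le L_f\,\tv(\sigma(t))$ together with the two displayed bounds, this term is dominated by $\bigl(L_f\,C\,\epsilon\,\norma{\sigma_o}_{\L1}+L_f\,\epsilon\,\norma{\nabla\eta}_{\L\infty}\,\tv(\sigma(t))\bigr)\,\norma{\rho(t)-\sigma(t)}_{\L1}$, so $\mathcal{L}_2$ is finite on $[0,T]$ and --- the crucial point --- it multiplies $\norma{\rho(t)-\sigma(t)}_{\L1}$, precisely because the flux discrepancy is itself of size $\norma{\rho(t)-\sigma(t)}_{\L1}$.

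Grönwall's lemma then yields $\norma{\rho(t)-\sigma(t)}_{\L1}\le\norma{\rho_o-\sigma_o}_{\L1}\,e^{t\,A(t)}$ with $A(t)=\sup_{s\in[0,t]}\bigl(\mathcal{L}_1(s)+\mathcal{L}_2(s)\bigr)$, which is the quantity~\eqref{eq:As}; in particular this furnishes the uniqueness asserted in Theorem~\ref{thm:main}. The only genuinely non-routine ingredient is the last assertion of the first paragraph --- that the frozen-flux discrepancy is $\L1$-small in $\rho-\sigma$ --- since that is what confines the non-locality and lets Grönwall close; everything else is the classical stability theory for scalar conservation laws with rough coefficients (as in~\cite{ACG2015,ACT2015}, more generally~\cite{MercierDaSola,MercierV2}), and matching the constants to reproduce exactly~\eqref{eq:As} is a lengthy but mechanical bookkeeping of the estimates in~\eqref{eq:c12} and Theorem~\ref{thm:main}. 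One minor subtlety worth recording is that $f$ is only Lipschitz, not $C^1$: this is harmless, because in the doubling computation $f'$ only ever appears inside integrals against $\L1$ densities, so $f'\in\L\infty$ suffices (alternatively one first mollifies $f$ and removes the mollification at the end).
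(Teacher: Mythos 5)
Your strategy is the one the paper actually uses: freeze the non-local fields $\rv=\boldsymbol{I}(\rho)$, $\sv=\boldsymbol{I}(\sigma)$ as given space--time coefficients, run Kru\v zkov's doubling of variables in the Karlsen--Risebro style for a flux of the form $u\,\boldsymbol{\vs}(\xv)+f(u)\,V(t,\xv)$, control the coefficient discrepancy by the two convolution estimates $\norma{\sv(t)-\rv(t)}_{\L\infty}\leq 2\,\epsilon\,\norma{\nabla\eta}_{\L\infty}\norma{\rho(t)-\sigma(t)}_{\L1}$ and $\norma{\div(\sv-\rv)(t)}_{\L\infty}\leq \epsilon\,\norma{\Delta\eta}_{\L\infty}\left(1+\norma{\sigma(t)}_{\L1}\norma{\nabla\eta}_{\L1}\right)\norma{\rho(t)-\sigma(t)}_{\L1}$, use the $\BV$ bound on the solution for the term paired with $\norma{\sv-\rv}_{\L\infty}$, and close with Gronwall. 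That is exactly the paper's proof, and your identification of the discrepancy estimates as the only genuinely non-routine ingredient is right.

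The one point that is not mere ``bookkeeping'' is your $\mathcal{L}_1(t)$. In the sharp computation the common-coefficient contributions do \emph{not} enter the Gronwall rate at all: after summing the doubled entropy inequalities and letting $h\to0$, the part of the flux shared by both solutions, $\sgn(\rho-\sigma)\left[(\rho-\sigma)\,\boldsymbol{\vs}(\xv)+\sv(t,\xv)\left(f(\rho)-f(\sigma)\right)\right]$, appears only contracted against $\nabla_{\xv}\phi$; since it is dominated by $\left(\norma{\boldsymbol{\vs}}_{\L\infty}+\epsilon\,L_f\right)\modulo{\rho-\sigma}\in\L1$, it vanishes when the test function is sent to an indicator, and the surviving terms are precisely the two discrepancy terms $f(\rho)\,\div(\sv-\rv)$ and $f'(\rho)\,(\sv-\rv)\cdot\nabla\rho$. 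Consequently $A(s)$ in~\eqref{eq:As} contains only $2\,\epsilon\,\norma{\nabla\eta}_{\L\infty}\norma{\rho_o}_{\L1}$ and $\epsilon\,L_f\,\norma{\Delta\eta}_{\L\infty}\left(1+\norma{\sigma_o}_{\L1}\norma{\nabla\eta}_{\L1}\right)\tv(\rho(s))$ --- no $\norma{\nabla\boldsymbol{\vs}}_{\L\infty}$, no $L_f\,\norma{\div\boldsymbol{I}(\rho)}_{\L\infty}$, no Lipschitz constant of $\boldsymbol{I}(\rho)$. With the $\mathcal{L}_1$ you describe, your exponent is strictly larger than~\eqref{eq:As}, and no amount of constant-matching on the discrepancy term removes those extra addends; what is needed is the structural cancellation above. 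This does not invalidate your argument as a proof of Lipschitz dependence (and hence uniqueness), but as written it proves the estimate with a worse rate than the one asserted in the statement.
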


\begin{proof}
  In the rest of the proof, to avoid heavy notation, we will denote
  pairs in $\reali^2$ by $\xv$ or $\yv$. Introduce the following
  notation:
  \begin{align}
    \label{eq:RS}
    \rv(t,\xv) = \
    & \left(\boldsymbol{I}(\rho(t))\right)\!(\xv),
    &
      \sv(t,\xv) = \
    & \left(\boldsymbol{I}(\sigma(t))\right)\!(\xv).
  \end{align}
  The idea is to apply the \emph{doubling of variables method}
  introduced by Kru\v zkov in~\cite{Kruzkov}, exploiting in particular
  the proof of~\cite[Lemma~4]{NARWA2019}. There, a flux of the form
  $f(t,x, \rho) \, V (t,x)$ is taken into account, with $x\in \reali$,
  the proof being valid also in the multidimensional case,
  i.e.~$x\in\reali^n$. Therefore we are going to use this result for
  what concerns the part of the flux of type $f(\rho) \,
  \rv(t,\xv)$.

  For the sake of completeness, we recall that a flux function of type
  $l (x)\, g (\rho)$ is considered in~\cite{KarlsenRisebro2003}, with
  $x\in \reali^n$, and the proof of~\cite[Lemma~4]{NARWA2019} follows
  the lines of that of~\cite[Theorem~1.3]{KarlsenRisebro2003}. Thus,
  here we are adding the dependence on time to the function $l(x)$
  considered in~\cite{KarlsenRisebro2003}.

  Let $\phi \in \Cc1 (]0,T[ \times \reali^2;\reali^+)$ be a test
  function as in the definition of solution by Kru\v zkov. Let
  $Y \in \Cc\infty (\reali; \reali^+)$ be such that
  \begin{align*}
    Y (z)= \
    & Y (-z),
    &
      Y (z) =\
    & 0 \mbox{ for } \modulo{z}\geq1,
    & \int_{\reali}Y (z)\d z=1.
  \end{align*}
  Define, for $h>0$, $Y_h (z)= \frac1h Y (\frac{z}{h})$. Clearly,
  $Y_h \in \Cc\infty (\reali; \reali^+)$, $Y_h (z)=Y_h (-z)$,
  $Y_h (z)=0$ for $\modulo{z}\geq h$, $\int_\reali Y_h (z) \d z=1$ and
  $Y_h \to \delta_0$ as $h\to 0$, $\delta_0$ being the Dirac delta in
  $0$. Define moreover
  \begin{displaymath}
    \psi_h (t,\xv, s, \yv) =
    \phi\left(\frac{t+s}2, \frac{\xv + \yv}2\right) \, Y_h (t-s) \, \prod_{i=1}^2 Y_h (x_i - y_i).
  \end{displaymath}
  Introduce the space $\Pi_T = \ ]0,T[ \times \reali^2$ and, from the
  definition of solution, derive the following entropy inequalities
  for $\rho=\rho (t,\xv)$ and $\sigma=\sigma (s, \yv)$:
  \begin{align*}
    \iiiint\limits_{\Pi_T \times \Pi_T} \left\{
    \modulo{\rho-\sigma} \, \partial_t \psi_h
    + \modulo{\rho-\sigma}\, \nabla_{\xv} \boldsymbol{\vs} (\xv) \cdot \nabla_{\xv}\psi_h
    + \sgn (\rho-\sigma)\left(f (\rho)- f (\sigma)\right)  \rv (t, \xv)  \cdot  \nabla_{\xv}\psi_h
    \right.
    \\
    \left.
    - \sgn (\rho-\sigma) \, \sigma\, \div_{\xv} \boldsymbol{\vs} (\xv) \, \psi_h
    - \sgn (\rho-\sigma) \, f (\sigma) \,\div_{\xv} \rv (t, \xv) \, \psi_h
    \right.\} \d\xv\d{t} \d\yv \d{s} \geq 0,
    \\[6pt]
    \iiiint\limits_{\Pi_T \times \Pi_T} \left\{
    \modulo{\sigma-\rho} \, \partial_s \psi_h
    + \modulo{\sigma-\rho}\, \nabla_{\yv} \boldsymbol{\vs} (\yv) \cdot \nabla_{\yv}\psi_h
    + \sgn (\sigma-\rho)\left(f (\sigma)- f (\rho)\right)  \sv (s, \yv)  \cdot  \nabla_{\yv}\psi_h
    \right.
    \\
    \left.
    - \sgn (\sigma-\rho) \, \rho\, \div_{\yv} \boldsymbol{\vs} (\yv) \, \psi_h
    - \sgn (\sigma-\rho) \, f (\rho) \,\div_{\yv} \sv (s, \yv) \, \psi_h
    \right.\} \d\xv\d{t} \d\yv \d{s} \geq 0.
  \end{align*}
  Sum the two inequalities above and rearrange the terms therein,
  following the proof of~\cite[Theorem~1]{Kruzkov} for what concerns
  the linear part of the flux and the proof
  of~\cite[Lemma~4]{NARWA2019}, see
  also~\cite[Theorem~1.3]{KarlsenRisebro2003}, for the other part:
  \begin{align*}
    \iiiint\limits_{\Pi_T\times\Pi_T}\left\{
    \modulo{\rho-\sigma} (\partial_t \psi_h + \partial_s \psi_h)
    + \sgn (\rho-\sigma) (\rho \, \boldsymbol{\vs} (\xv) - \sigma \boldsymbol{\vs} (\yv))
    \cdot (\nabla_{\xv} \psi_h + \nabla_{\yv}\psi_h)
    \right.&
    \\[-4pt]
    +\sgn (\rho-\sigma)\, \sigma \left[
    (\boldsymbol{\vs} (\yv) - \boldsymbol{\vs} (\xv)) \cdot \nabla_{\xv}\psi_h
    - \div_{\xv} \boldsymbol{\vs} (\xv) \, \psi_h
    \right]&
    \\
    +\sgn (\rho-\sigma) \,\rho \left[
    (\boldsymbol{\vs} (\yv) - \boldsymbol{\vs} (\xv)) \cdot \nabla_{\yv}\psi_h
    + \div_{\yv} \boldsymbol{\vs} (\yv) \, \psi_h
    \right]&
    \\
    + \sgn (\rho-\sigma) \left(f (\rho) \, \rv (t, \xv) - f (\sigma) \, \sv (s, \yv)\right)
    \cdot (\nabla_{\xv} \psi_h + \nabla_{\yv}\psi_h)&
    \\
    + \sgn (\rho-\sigma) \, f (\sigma) \left[
    \left(\sv (s, \yv) - \rv (t,\xv)\right) \cdot \nabla_{\xv} \psi - \div_{\xv} \rv (t, \xv) \, \psi_h
    \right]&
    \\ \left.
    + \sgn (\rho-\sigma) \, f (\rho) \left[
    \left(\sv (s, \yv) - \rv (t,\xv)\right) \cdot \nabla_{\yv} \psi + \div_{\yv} \sv (s, \yv) \, \psi_h
    \right] \right\}\d\xv \d t \d\yv \d s
    & \geq 0.
  \end{align*}
  Let $h \to 0$, which gives
  \begin{align*}
    \iint\limits_{\Pi_T}\left\{
    \modulo{\rho-\sigma} \partial_t \phi_h
    + \sgn (\rho-\sigma) \left[
    (\rho - \sigma)  \, \boldsymbol{\vs} (\xv) + \sv (t, \xv) \left(f (\rho) - f (\sigma) \right)
    \right]\cdot \nabla_{\xv} \phi
    \right.&
    \\[-4pt]
    + \sgn (\rho-\sigma) \, f (\rho)
    \div_{\xv} \left(\sv (t, \xv) - \rv (t, \xv)\right) \, \phi
    \\
    \left.
    + \sgn (\rho-\sigma) \, f' (\rho)
    \left(\sv (t, \xv) - \rv (t, \xv)\right) \partial_x\rho (t,\xv) \, \phi
    \right\}\d\xv \d t \d\yv \d s
    & \geq 0.
  \end{align*}
  Choosing a suitable test function $\phi$ leads to
  \begin{align*}
    \int_{\reali^2}\modulo{\rho (t,\xv) - \sigma (t,\xv)} \d\xv
    - \int_{\reali^2}\modulo{\rho (\tau,\xv) - \sigma (\tau,\xv)} \d\xv&
    \\
    +\int_\tau^t \int_{\reali^2} \modulo{\div (\sv (s,\xv) - \rv (s,\xv))} \, f (\rho (s,\xv)) \d\xv \d s&
    \\
    + \int_\tau^t L_f \, \norma{\sv (s) - \rv (s)}_{\L\infty (\reali^2)} \tv (\rho (s)) \d s & \geq 0.
  \end{align*}
  Observe that, following~\cite[Lemma~4.1]{parahyp}, the following bounds hold
  \begin{align*}
    \norma{\sv(s) - \rv (s)}_{\L\infty(\reali^2)} \leq \
    & 2 \, \epsilon \norma{\nabla \eta}_{\L\infty}\norma{\rho(s) - \sigma(s)}_{\L1(\reali^2)},
    \\
    \norma{\div(\sv (s)- \rv (s))}_{\L\infty(\reali^2)} \leq \
    & \epsilon \norma{\rho(s)- \sigma(s)}_{\L1(\reali^2)}
      \norma{\Delta \eta}_{\L\infty}
      \left( 1 + \norma{\sigma(s)}_{\L1(\reali^2)} \norma{\nabla \eta}_{\L1}\right).
  \end{align*}
  Thus, letting $\tau \to 0$ and exploiting the bounds on $\rho$ and $\sigma$ given by
  Theorem~\ref{thm:main}, as well as $f(r) \leq r$, we get
  \begin{align*}
    & \int_{\reali^2} \modulo{\rho (t,\xv) - \sigma (t,\xv)} \d\xv
    \\
    \leq  \
    & \int_{\reali^2}\modulo{\rho_o (\xv) - \sigma_o (\xv)} \d\xv
      + 2 \, \epsilon \norma{\nabla \eta}_{\L\infty}\norma{\rho_o}_{\L1 (\reali^2)} \int_0^t \int_{\reali^2}
      \modulo{\rho (s,\xv) - \sigma (s,\xv)} \d\xv \d s
    \\
    & + \epsilon \, L_f \norma{\Delta \eta}_{\L\infty}
      \left( 1 + \norma{\sigma_o}_{\L1(\reali^2)} \norma{\nabla \eta}_{\L1}\right)
      \int_0^t \tv (\rho (s)) \left(\int_{\reali^2}   \modulo{\rho (s,\xv) - \sigma (s,\xv)} \d\xv \right)\d s
    \\
    = \
    & \int_{\reali^2}\modulo{\rho_o (\xv) - \sigma_o (\xv)} \d\xv
      + \int_0^t A (s)  \left(\int_{\reali^2}   \modulo{\rho (s,\xv) - \sigma (s,\xv)} \d\xv\right) \d s,
  \end{align*}
  with
  \begin{equation}
    \label{eq:As}
    A (s) = 2 \, \epsilon \norma{\nabla \eta}_{\L\infty}\norma{\rho_o}_{\L1 (\reali^2)}
    + \epsilon \, L_f \norma{\Delta \eta}_{\L\infty}
    \left( 1 + \norma{\sigma_o}_{\L1(\reali^2)} \norma{\nabla \eta}_{\L1}\right) \tv (\rho (s)).
  \end{equation}
  An application of Gronwall Lemma, together with
  \begin{displaymath}
    \int_0^t A(s)\, \exp\left(\int_s^tA (r)\d r\right)\d s = -1 + \exp \left(\int_0^t A (s) \d s\right),
  \end{displaymath}
  yields the desired estimate
  \begin{displaymath}
    \norma{\rho (t) - \sigma (t)}_{\L1 (\reali^2)} \leq
     \norma{\rho_o - \sigma_o}_{\L1 (\reali^2)} \, e^{t \, A (t)}.
  \end{displaymath}
 \end{proof}

  \begin{remark}
    We can interpret $\rho$ and $\sigma$ as solutions of the following
    Cauchy problems:
    \begin{displaymath}
      \left\{
        \begin{array}{l}
          \del_t \rho + \nabla \cdot \boldsymbol{g}(t, \xv, \rho) = 0,
          \\
          \rho(0,\xv) = \rho_o(\xv),
        \end{array}
      \right.
      \qquad
      \left\{
        \begin{array}{l@{\qquad}r@{\,}c@{\,}l}
          \del_t \sigma + \nabla \cdot \boldsymbol{h}(t, \xv, \sigma) = 0,
          & (t,\xv)
          &\in
          & [0,T] \times \reali^2,
          \\
          \sigma(0,\xv) = \sigma_o(\xv),
          & \xv
          &\in
          & \reali^2,
        \end{array}
      \right.
    \end{displaymath}
    where
    \begin{align*}
      \boldsymbol{g} (t,\xv, r) = \
      & r \, \boldsymbol{\vs}(\xv) + f(r) \, \rv(t,\xv),
      & \boldsymbol{h} (t,\xv, r) = \
      & r \, \boldsymbol{\vs}(\xv) + f(r) \, \sv(t,\xv),
    \end{align*}
    so that the $\L1$ distance between the solutions at time $t>0$ can
    be estimated by~\cite[Proposition~2.10]{MercierDaSola}, see also
    the refinement in~\cite[Proposition~2.9]{MercierV2}. However,
    making use of the explicit expression of the flux in the present
    case, one may see that the bound provided by
    Proposition~\ref{prop:lipdep} is sharper than that coming
    from~\cite[Proposition~2.9]{MercierV2}.
  \end{remark}

  \section{Lax--Friedrichs scheme}
\label{sec:lf}

It is also possible to consider a piece-wise constant solution
$\rho_{\Delta}$ to~\eqref{eq:1} as in~\eqref{eq:rhodelta} defined
through a Lax--Friedrichs type finite volume scheme with dimensional
splitting. The algorithm reads as follows
\begin{lgrthm}
  \label{alg:1}
  \begin{align}
    &\texttt{for } n=0,\ldots N_T-1 \nonumber
    \\
    \label{eq:numFx}
    &
    \quad F^n (x,y,u,w) = \frac{1}{2} \left[ \vs_1(x,y) (u+w) +
     J^n_1 (x,y) \left( f(u) + f(w) \right)\right] -
    \frac{\alpha}{2} \left(w - u \right)
    \\ \label{eq:numGy}
    & \quad G^n (x,y,u,w) = \frac{1}{2} \left[ \vs_2 (x,y)(u+w) +
      J^n_2(x,y) \left( f(u) + f(w) \right)\right] -
    \frac{\beta}{2} \left(w - u \right)
    \\ \label{eq:scheme1}
    & \quad
    \rho_{i,j}^{n+1/2} = \rh{i,j} - \lambda_x \left[F^n (
      \x{i+\sfrac12,j}, \rh{i,j}, \rh{i+1,j}) - F^n (
      \x{i-\sfrac12,j}, \rh{i-1,j}, \rh{i,j})\right]
    \\ \label{eq:scheme2}
    & \quad \rho_{i,j}^{n+1} =
    \rho_{i,j}^{n+1/2} - \lambda_y \left[G^n ( \x{i,j+\sfrac12},
      \rh{i,j}, \rh{i,j+1}) - G^n(\x{i,j-\sfrac12}, \rh{i,j-1},
      \rh{i,j})\right]
    \\
    & \texttt{end} \nonumber
  \end{align}
\end{lgrthm}
The algorithm is close to that studied in~\cite{ACG2015}, except that
in the present case to compute $\rho^{n+1}$ the flux is evaluated at
$\rho^{n}$, instead of $\rho^{n+1/2}$.

Following closely the proofs presented in~\cite{ACG2015}, it is
possible to recover also for Algorithm~\ref{alg:1} the bounds on the
approximate solution necessary to prove the convergence.
Below, we state only the final results, omitting the computations.

\begin{lemma}{\bf (Positivity)}
  \label{lem:pos}
  Let $\rho_o \in \L\infty (\reali^2; \reali^+)$. Let assumptions~\ref{vs}, \ref{H} and~\ref{eta} hold.
  Assume that
  \begin{align}
    \label{eq:CFL1}
    \alpha \geq \
    & \norma{\vs_1}_{\L\infty} + \epsilon \, L_f,
    &
    \lambda_x \leq \
    & \frac13 \, \min\left\{\frac1\alpha, \frac{1}{2 \, \epsilon \, L_f + \dx \,
        \norma{\vs_1}_{\L\infty}}\right\},
    \\
    \label{eq:CFL2}
    \beta \geq \
    & \norma{\vs_2}_{\L\infty} + \epsilon \, L_f,
    & \lambda_y \leq\
    &\frac13
    \, \min\left\{\frac1\beta, \frac{1}{2 \, \epsilon \, L_f + \dx \,
        \norma{\vs_2}_{\L\infty}}\right\}.
  \end{align}
  Then, for all $t>0$ and $(x,y) \in \reali^2$, the piece-wise constant
  approximate solution $\rho_\Delta$~\eqref{eq:rhodelta} constructed through Algorithm~\ref{alg:1} is such that
  $\rho_\Delta (t,x,y)\geq 0$.
\end{lemma}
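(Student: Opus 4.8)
The plan is to prove positivity by induction on the time level $n$, using that, under \eqref{eq:CFL1}--\eqref{eq:CFL2}, each of the two fractional steps of Algorithm~\ref{alg:1} is a monotone update. The base case holds because $\rho_o \geq 0$ forces every cell average $\rho^0_{i,j}$ to be non-negative. Assume then $\rh{i,j} \geq 0$ for all $i,j \in \interi$; I will show successively that $\rho^{n+1/2}_{i,j} \geq 0$ and $\rho^{n+1}_{i,j} \geq 0$.

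For the first step, fix $i,j$, drop the index $j$, and use the notation $v_{i\pm1/2}$ of \eqref{eq:notvJ} together with $J_{i\pm1/2} := J^n_1(x_{i\pm1/2,j})$. Substituting \eqref{eq:numFx} into \eqref{eq:scheme1} and writing $f(\rh{k}) = \mu_k\,\rh{k}$ with $\mu_k = f'(\zeta_k)$, $\zeta_k \in \mathcal{I}(0,\rh{k})$ and hence $\modulo{\mu_k} \leq L_f$ — legitimate by the mean value theorem since $f(0) = 0$ and $\rh{k} \geq 0$ — I would rearrange $\rho^{n+1/2}_i$ as a combination $a_{i-1}\rh{i-1} + a_i\,\rh{i} + a_{i+1}\rh{i+1}$ with $a_{i-1} = \tfrac{\lambda_x}{2}\big(\alpha + v_{i-1/2} + \mu_{i-1}J_{i-1/2}\big)$, $a_{i+1} = \tfrac{\lambda_x}{2}\big(\alpha - v_{i+1/2} - \mu_{i+1}J_{i+1/2}\big)$, and $a_i = 1 - \lambda_x\big(\alpha + \tfrac12(v_{i+1/2}-v_{i-1/2}) + \tfrac12\,\mu_i\,(J_{i+1/2}-J_{i-1/2})\big)$. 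The pointwise bound $\modulo{J^n_1} \leq \epsilon$, immediate from the explicit form of $\boldsymbol{I}$ in \eqref{eq:vdyn} since $\modulo{\boldsymbol{I}(\rho)} \leq \epsilon$, together with $\modulo{\mu_k} \leq L_f$, shows that the first inequality in \eqref{eq:CFL1}, namely $\alpha \geq \norma{\vs_1}_{\L\infty} + \epsilon\,L_f$, gives $a_{i-1}, a_{i+1} \geq 0$. For $a_i$, I would bound $\modulo{v_{i+1/2}-v_{i-1/2}}$ by a term of order $\dx$ via the $\C2$ regularity of $\boldsymbol{\vs}$ in \ref{vs}, and $\modulo{J_{i+1/2}-J_{i-1/2}} \leq 2\epsilon$, so that the bracket does not exceed $\alpha + 2\,\epsilon\,L_f$ up to a term of order $\dx$; the upper bound on $\lambda_x$ in \eqref{eq:CFL1}, with its factor $\tfrac13$, then makes $a_i \geq 0$. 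Hence $\rho^{n+1/2}_i \geq 0$.

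The second step is handled identically: substituting \eqref{eq:numGy} into \eqref{eq:scheme2} and repeating the computation with $\beta$, $\lambda_y$, $\vs_2$, $J^n_2$ in place of $\alpha$, $\lambda_x$, $\vs_1$, $J^n_1$, the CFL conditions \eqref{eq:CFL2} yield $\rho^{n+1}_{i,j} \geq 0$, the input $\rho^{n+1/2}$ being non-negative by the first step. This closes the induction, and since $\rho_\Delta$ in \eqref{eq:rhodelta} is piecewise constant with values among the $\rh{i,j}$, we conclude $\rho_\Delta(t,x,y) \geq 0$ for all $t > 0$ and $(x,y) \in \reali^2$.

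I expect the only non-routine point to be the algebra needed to display $\rho^{n+1/2}_{i,j}$ (and then $\rho^{n+1}_{i,j}$) as a combination with non-negative coefficients: this is precisely where the Lax--Friedrichs numerical viscosities $\alpha,\beta$ and the lower bounds in \eqref{eq:CFL1}--\eqref{eq:CFL2} enter. Two facts make it work and are worth isolating — that the nonlinearity can be absorbed because $f(0) = 0$ and $f$ is globally Lipschitz with constant $L_f$, and that the non-local drift is uniformly bounded, $\modulo{J^n_k} \leq \epsilon$, so that the thresholds on $\alpha,\beta$ and on $\lambda_x,\lambda_y$ depend only on the data and not on the evolving discrete solution. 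The dimensional splitting is what lets the $x$- and $y$-directions, hence the two pairs of conditions, be treated separately.
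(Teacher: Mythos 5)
The paper never writes out a proof of this lemma: Section~5 states the Lax--Friedrichs bounds ``omitting the computations'' and refers to~\cite{ACG2015}, so your proposal can only be compared with the standard argument, which is indeed what you follow for the first half-step, and there it is essentially correct. The decomposition $\rho^{n+1/2}_{i,j}=a_{i-1}\rho^n_{i-1,j}+a_i\rho^n_{i,j}+a_{i+1}\rho^n_{i+1,j}$ with your coefficients is the right identity, and $a_{i\pm1}\geq 0$ follows from $\alpha\geq\norma{\vs_1}_{\L\infty}+\epsilon L_f$ together with $\modulo{J^n_1}\leq\epsilon$ and $\modulo{\mu_k}\leq L_f$ (legitimate since $f(0)=0$). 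One bookkeeping point, though: the way you close $a_i\geq0$ leaves the term $\tfrac{\lambda_x}{2}\modulo{v_{i+1/2}-v_{i-1/2}}\approx\tfrac{\dt}{2}\norma{\partial_x\vs_1}_{\L\infty}$, which the literal CFL~\eqref{eq:CFL1} does not control (its entry $\dx\,\norma{\vs_1}_{\L\infty}$ only bounds $\dt\,\norma{\vs_1}_{\L\infty}$, not $\dt\,\norma{\partial_x\vs_1}_{\L\infty}$). The clean route is the crude one: the bracket is at most $\alpha+\norma{\vs_1}_{\L\infty}+\epsilon L_f\leq 2\alpha$, so $\lambda_x\leq\tfrac{1}{3\alpha}$ already gives $a_i\geq\tfrac13$.

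The genuine gap is the second half-step. In Algorithm~\ref{alg:1} as written --- and the paper stresses this immediately after it --- the fluxes $G^n$ in~\eqref{eq:scheme2} are evaluated at $\rho^n_{i,j-1},\rho^n_{i,j},\rho^n_{i,j+1}$, \emph{not} at the $\rho^{n+1/2}$ values. Hence the second step is not ``handled identically'': $\rho^{n+1}_{i,j}$ equals $\rho^{n+1/2}_{i,j}$ plus a three-point combination of $\rho^n$-values whose middle coefficient is \emph{negative} (of size roughly $\lambda_y\beta$), so non-negativity of $\rho^{n+1/2}$ alone does not yield $\rho^{n+1}_{i,j}\geq0$. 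You must couple the two half-steps, e.g.\ use the lower bound $\rho^{n+1/2}_{i,j}\geq a_i\,\rho^n_{i,j}$ from step one and check that $a_i$ dominates the negative middle coefficient of step two; with the crude constants this pits $a_i\geq\tfrac13$ against a coefficient that may reach $\lambda_y(\beta+\norma{\vs_2}_{\L\infty}+\epsilon L_f)\leq\tfrac23$, so the argument does not close as you describe it. Closing it for the scheme as literally written requires the sharper, order-$\dx$, $\dy$ bounds on the increments of $\boldsymbol{\vs}$ and of $J^n$ (as in the appendix) and, in effect, an additional smallness condition on $\dt$ beyond~\eqref{eq:CFL1}--\eqref{eq:CFL2}; alternatively, your argument is verbatim correct for the variant (as in~\cite{ACG2015}) in which the second step's flux arguments are the $\rho^{n+1/2}$ values. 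Either way, this step needs an explicit treatment rather than an appeal to symmetry.
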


\begin{lemma} {\bf ($\L1$ bound)}
  \label{lem:l1}
  Let $\rho_o \in \L\infty (\reali^2; \reali^+)$. Let~\ref{vs}, \ref{H}, \ref{eta}, \eqref{eq:CFL1}
  and~\eqref{eq:CFL2} hold. Then, for all $t>0$, $\rho_\Delta$
  in~\eqref{eq:rhodelta} constructed through Algorithm~\ref{alg:1}  satisfies~\eqref{eq:l1}, that is
  \begin{displaymath}
    \norma{\rho_\Delta (t)}_{\L1 (\reali^2)}
    = \norma{\rho_o}_{\L1(\reali^2)}.
  \end{displaymath}
\end{lemma}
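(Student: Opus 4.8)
The plan is to exploit the conservative form of the Lax--Friedrichs scheme. First I would invoke Lemma~\ref{lem:pos}: under the CFL conditions~\eqref{eq:CFL1}--\eqref{eq:CFL2} the approximate solution is non-negative, so that for $t\in[t^n,t^{n+1}[$
\begin{displaymath}
  \norma{\rho_\Delta(t,\cdot,\cdot)}_{\L1(\reali^2)}
  = \dx\,\dy \sum_{i,j\in\interi}\rh{i,j}.
\end{displaymath}
It therefore suffices to show that the discrete mass $M^n:=\dx\,\dy\sum_{i,j\in\interi}\rh{i,j}$ is unchanged through both fractional steps, $M^n=M^{n+1/2}=M^{n+1}$, and to observe that $M^0=\norma{\rho_o}_{\L1(\reali^2)}$, which is immediate from the definition of $\rho^0_{i,j}$ as the cell averages of the non-negative datum $\rho_o$.

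For the first fractional step I would sum~\eqref{eq:scheme1} over all $i,j\in\interi$. The numerical flux $F^n$ of~\eqref{eq:numFx} is affine in its last two arguments, with coefficients controlled by $\norma{\vs_1}_{\L\infty}$, by $\epsilon$ (since $\modulo{J^n_1}\le\epsilon$ by~\eqref{eq:vdyn}), by $L_f$ (since $\modulo{f(u)}\le L_f\modulo{u}$, because $f(0)=0$) and by the viscosity coefficient $\alpha$; hence
\begin{displaymath}
  \modulo{F^n(x_{i+1/2,j},\rh{i,j},\rh{i+1,j})}\le C\bigl(\rh{i,j}+\rh{i+1,j}\bigr)
\end{displaymath}
for a constant $C$ depending only on these quantities. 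Together with positivity, this bound yields by induction on $n$ that $\sum_{i,j\in\interi}\rh{i,j}<\infty$ (for $\rho_o\in\L1$), and it licenses splitting the now absolutely convergent double series so that the flux differences cancel after a shift of the index $i$, giving $M^{n+1/2}=M^n$. Repeating the argument for the second update~\eqref{eq:scheme2}, whose flux $G^n$ of~\eqref{eq:numGy} has the same structure, gives $M^{n+1}=M^{n+1/2}$; iterating over $n$ we obtain $M^n=M^0=\norma{\rho_o}_{\L1}$ for every $n$, which is~\eqref{eq:l1}.

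The only genuinely delicate point is the absolute summability just invoked, which is what allows one to reorganise the infinite sum and make the flux differences cancel; everything else is a one-line consequence of the scheme being written in conservation form. As an alternative, one may simply transcribe the conservation argument underlying the Roe-case Lemma~\ref{lem:L1roe}, following~\cite{ACG2015}, which applies verbatim to Algorithm~\ref{alg:1}.
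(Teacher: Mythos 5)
Your argument is correct and coincides with the paper's (omitted) proof: the paper simply asserts that the $\L1$ identity follows from the conservative form of the scheme (as for Lemma~\ref{lem:L1roe}, following~\cite{ACG2015}), which is exactly your telescoping-sum argument, made rigorous by the positivity of Lemma~\ref{lem:pos} and the flux bound ensuring absolute summability so the infinite sums may be shifted. The only slip is calling $F^n$ ``affine'' in $(u,w)$ — it is not, since $f$ is nonlinear — but your estimate $\modulo{F^n(x_{i+1/2,j},\rh{i,j},\rh{i+1,j})}\leq C\,(\rh{i,j}+\rh{i+1,j})$ only uses $\modulo{f(u)}\leq L_f\,u$ together with $\modulo{J_1^n}\leq\epsilon$, so nothing in the proof is affected.
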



\begin{lemma} {\bf ($\L\infty$ bound)}
  \label{lem:linf}
  Let $\rho_o \in \L\infty (\reali^2; \reali^+)$.Let~\ref{vs}, \ref{H}, \ref{eta}, \eqref{eq:CFL1}
  and~\eqref{eq:CFL2} hold. Then, for all $t>0$, $\rho_\Delta$
  in~\eqref{eq:rhodelta} constructed through Algorithm~\ref{alg:1}  satisfies
  \begin{displaymath}
    \norma{\rho_\Delta (t)}_{\L\infty (\reali^2)}
    \leq  \norma{\rho_o}_{\L\infty} \, e^{\mathcal{\tilde{C}}_\infty \, t},
  \end{displaymath}
  where
  \begin{displaymath}
    \mathcal{\tilde{C}}_\infty =
      \norma{\partial_x \vs_1}_{\L\infty}
      + \norma{\partial_y \vs_2}_{\L\infty}
      + 4 \, \epsilon \, L_f \, \norma{\nabla^2 \eta}_{\L\infty} \norma{\rho_o}_{\L1}.
  \end{displaymath}
\end{lemma}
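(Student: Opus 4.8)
The plan is to adapt to the Lax--Friedrichs scheme the argument used for the $\L\infty$ bound of the Roe scheme (Lemma~\ref{lem:Linfroe}). The one point requiring care — and essentially the only complication relative to~\cite{ACG2015} — is that, since in~\eqref{eq:scheme2} the flux $G^n$ is evaluated at $\rho^n$ rather than at $\rho^{n+1/2}$, substituting~\eqref{eq:scheme1} into~\eqref{eq:scheme2} collapses the whole time step into a single \emph{five-point} update, which I would write as $\rho^{n+1}_{i,j} = \mathcal{G}(a,b,c,d,e)$ with $(a,b,c,d,e) = (\rho^n_{i-1,j},\rho^n_{i+1,j},\rho^n_{i,j-1},\rho^n_{i,j+1},\rho^n_{i,j})$ and
\[
  \mathcal{G}(a,b,c,d,e) = e - \lambda_x\bigl[F^n(x_{i+1/2,j},e,b) - F^n(x_{i-1/2,j},a,e)\bigr] - \lambda_y\bigl[G^n(x_{i,j+1/2},e,d) - G^n(x_{i,j-1/2},c,e)\bigr].
\]
It is this composed two-dimensional scheme that must be analysed, not the two fractional substeps in isolation: the central cell $e = \rho^n_{i,j}$ enters both flux differences, and only after composition do its numerical-viscosity contributions assemble into a coefficient of the benign form $1 + \mathcal{O}(\dt)$, whereas the $y$-substep taken alone would carry an unabsorbable $\mathcal{O}(\lambda_y)$ term.

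First I would check that $\mathcal{G}$ is nondecreasing in each of its five arguments. From $\partial_u F^n = \frac12(\vs_1 + J^n_1\,f'(u) + \alpha)$, $\partial_w F^n = \frac12(\vs_1 + J^n_1\,f'(w) - \alpha)$ and the analogues for $G^n$ (with $\vs_2$, $J^n_2$, $\beta$), together with $\norma{J^n_k}_{\L\infty} \leq \epsilon$ (cf.~\eqref{eq:Jinf}) and $\norma{f'}_{\L\infty} = L_f$, monotonicity in the edge arguments $a,b,c,d$ is immediate from $\alpha \geq \norma{\vs_1}_{\L\infty} + \epsilon\,L_f$ and $\beta \geq \norma{\vs_2}_{\L\infty} + \epsilon\,L_f$. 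For the central argument,
\[
  \partial_e\mathcal{G} = 1 - \lambda_x\alpha - \lambda_y\beta - \frac{\lambda_x}{2}\bigl[\vs_1(x_{i+1/2,j}) - \vs_1(x_{i-1/2,j}) + f'(e)\bigl(J^n_1(x_{i+1/2,j}) - J^n_1(x_{i-1/2,j})\bigr)\bigr] - \frac{\lambda_y}{2}\bigl[\cdots\bigr],
\]
and, bounding the spatial increments of $\vs_k$ by $\dx\,\norma{\partial_x\vs_1}_{\L\infty}$, $\dy\,\norma{\partial_y\vs_2}_{\L\infty}$ and those of $J^n_k$ by~\eqref{eq:Jx} and its $y$-analogue, i.e.~by $2\,\epsilon\,\dx\,\norma{\nabla^2\eta}_{\L\infty}\norma{\rho^n}_{\L1}$ with $\norma{\rho^n}_{\L1} = \norma{\rho_o}_{\L1}$ by Lemma~\ref{lem:l1}, one gets $\partial_e\mathcal{G} \geq 1 - \lambda_x\alpha - \lambda_y\beta - \frac{\dt}{2}\,\mathcal{\tilde{C}}_\infty \geq 0$ under~\eqref{eq:CFL1}--\eqref{eq:CFL2} (the bounds $\lambda_x\alpha \leq \frac13$, $\lambda_y\beta \leq \frac13$ leaving room for the remaining terms, which are of order $\dt$).

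Finally, $\rho^n \geq 0$ by Lemma~\ref{lem:pos}, so setting $M := \norma{\rho^n}_{\L\infty} \geq 0$ one has $0 \leq a,b,c,d,e \leq M$ at every $(i,j)$, whence, by monotonicity, $0 \leq \rho^{n+1}_{i,j} = \mathcal{G}(a,b,c,d,e) \leq \mathcal{G}(M,M,M,M,M)$. Evaluating at equal states kills the numerical viscosity, so
\[
  \mathcal{G}(M,\dots,M) = M - \lambda_x\bigl(\vs_1(x_{i+1/2,j}) - \vs_1(x_{i-1/2,j})\bigr)M - \lambda_x\bigl(J^n_1(x_{i+1/2,j}) - J^n_1(x_{i-1/2,j})\bigr)f(M) - (\text{analogous } y\text{-terms}),
\]
and, using $M \geq 0$, $0 \leq f(M) = f(M) - f(0) \leq L_f\,M$, $\modulo{\vs_1(x_{i+1/2,j}) - \vs_1(x_{i-1/2,j})} \leq \dx\,\norma{\partial_x\vs_1}_{\L\infty}$ (and its $y$-analogue), estimate~\eqref{eq:Jx} with Lemma~\ref{lem:l1}, and $\lambda_x\dx = \lambda_y\dy = \dt$, one obtains $\mathcal{G}(M,\dots,M) \leq (1 + \dt\,\mathcal{\tilde{C}}_\infty)\,M$. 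Hence $\norma{\rho^{n+1}}_{\L\infty} \leq (1 + \dt\,\mathcal{\tilde{C}}_\infty)\,\norma{\rho^{n}}_{\L\infty} \leq e^{\dt\,\mathcal{\tilde{C}}_\infty}\,\norma{\rho^{n}}_{\L\infty}$; iterating from $\norma{\rho^{0}}_{\L\infty} \leq \norma{\rho_o}_{\L\infty}$ (cell averaging) and using $\rho_\Delta(t,\cdot,\cdot) = \rho^n$ for $t \in [t^n,t^{n+1}[$ with $n\,\dt \leq t$ gives the claim. The factor $L_f$ in $\mathcal{\tilde{C}}_\infty$, absent from the Roe constant $\mathcal{C}_\infty$, comes only from $f(M) \leq L_f\,M$; the sharper $f(M) \leq M$ would remove it, but $L_f$ is present anyway in~\eqref{eq:CFL1}--\eqref{eq:CFL2}.
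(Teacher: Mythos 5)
Your reduction of the split step to a single five--point update $\mathcal{G}$ is the right move: since~\eqref{eq:scheme2} evaluates $G^n$ at $\rho^n$ rather than at $\rho^{n+1/2}$, the two substeps must indeed be composed before the sup--norm analysis, and your evaluation of $\mathcal{G}(M,\dots,M)$, combined with $f(M)=f(M)-f(0)\le L_f\,M$, estimate~\eqref{eq:Jx} (and its $y$--analogue) and Lemma~\ref{lem:l1}, reproduces exactly the stated constant $\mathcal{\tilde{C}}_\infty$, including the extra factor $L_f$ that distinguishes it from the Roe constant~\eqref{eq:Cinf}. The paper omits this proof entirely (deferring to the computations of~\cite{ACG2015}), and the monotone--scheme/incremental--form argument you use is the standard route those computations take, so in substance your plan is the intended one.

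There is, however, one step that does not follow from the stated hypotheses: the claim that $\partial_e\mathcal{G}\ge 1-\lambda_x\alpha-\lambda_y\beta-\tfrac{\dt}{2}\,\mathcal{\tilde{C}}_\infty\ge 0$ ``under~\eqref{eq:CFL1}--\eqref{eq:CFL2}''. Those conditions give $\lambda_x\alpha\le\tfrac13$ and $\lambda_y\beta\le\tfrac13$, but they contain no control on $\dt\,\mathcal{\tilde{C}}_\infty$: the constant $\mathcal{\tilde{C}}_\infty$ involves $\norma{\partial_x \vs_1}_{\L\infty}$, $\norma{\partial_y \vs_2}_{\L\infty}$ and $\norma{\rho_o}_{\L1}$, none of which appear in~\eqref{eq:CFL1}--\eqref{eq:CFL2}, so on a coarse mesh or for large data the residual margin of $\tfrac13$ need not absorb these $\mathcal{O}(\dt)$ terms and monotonicity in the central argument can fail. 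The gap is easily closed without strengthening the hypotheses, because monotonicity in $e$ is not actually needed. Split the central--cell contribution as
\begin{displaymath}
  e\left(1-\lambda_x\alpha-\lambda_y\beta\right)
  -\frac{\lambda_x}{2}\left(\vs_1(x_{i+1/2,j})-\vs_1(x_{i-1/2,j})\right)e
  -\frac{\lambda_x}{2}\left(J^n_1(x_{i+1/2,j})-J^n_1(x_{i-1/2,j})\right)f(e)
  -\left(\text{$y$--analogues}\right);
\end{displaymath}
the first term has nonnegative coefficient (indeed $\ge\tfrac13$) and is therefore bounded by $M\left(1-\lambda_x\alpha-\lambda_y\beta\right)$, while the remainders are bounded \emph{in absolute value} by $\tfrac{\dt}{2}\,\mathcal{\tilde{C}}_\infty\,M$ using $0\le e\le M$ and $0\le f(e)\le L_f\,e$, with no sign information required. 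Substituting $M$ only in the genuinely monotone edge arguments $a,b,c,d$ and treating the central cell this way yields $\rho^{n+1}_{i,j}\le\left(1+\dt\,\mathcal{\tilde{C}}_\infty\right)M$ under exactly~\eqref{eq:CFL1}--\eqref{eq:CFL2}; alternatively, keep your argument verbatim and add the (harmless for the convergence analysis, but extra) assumption $\dt\le 2/(3\,\mathcal{\tilde{C}}_\infty)$. The rest of your proof is correct as written.
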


\begin{remark}
    Compare the $\L\infty$ estimate obtained in Lemma~\ref{lem:linf} using the Lax--Friedrichs scheme with that in Lemma~\ref{lem:Linfroe}, given by the Roe scheme. Although they look very similar, the constants appearing in the exponent are actually different: when comparing $\mathcal{\tilde{C}}_\infty$ above to $\mathcal{C}_\infty$ as in~\eqref{eq:Cinf}, we see that in $\mathcal{\tilde{C}}_\infty$ the last addend is multiplied by $L_f$.
\end{remark}

\begin{proposition} {\bf ($\BV$ estimate in space)}
  \label{prop:bv}
  Let $\rho_o \in (\L\infty \cap \BV) (\reali^2; \reali^+)$. Let~\ref{vs}, \ref{H}, \ref{eta}, \eqref{eq:CFL1} and~\eqref{eq:CFL2} hold. Then, for all $t>0$, $\rho_\Delta$
  in~\eqref{eq:rhodelta} constructed through Algorithm~\ref{alg:1} satisfies the following estimate: for all
 $n=0, \ldots, N_T$,
  \begin{displaymath}
    \sum_{i,j \in \interi} \left(
      \dy \, \modulo{\rh{i+1,j} - \rh{i,j}}
      + \dx \, \modulo{\rh{i,j+1} - \rh{i,j}}\right)
    \leq \tilde{\mathcal{C}}_x(t^n),
  \end{displaymath}
  where
  \begin{equation}
    \label{eq:Cxlf}
    \tilde{\mathcal{C}}_x(t) = e^{2 \,t \, \mathcal{\tilde{K}}_1}  \sum_{i,j \in \interi} \left(
      \dx \, \modulo{\rho^0_{i,j+1} - \rho^0_{i,j}}
      +  \dy \, \modulo{\rho^0_{i+1,j} -\rho^0_{i,j}}
    \right)
    +  \frac{2 \, \mathcal{K}_2}{\mathcal{\tilde{K}}_1}\left(e^{2 \, t \, \mathcal{\tilde{K}}_1} -1
    \right),
  \end{equation}
  with
  \begin{align*}
    \mathcal{\tilde{K}}_1 = \
    &
    2 \, \norma{\nabla \boldsymbol{\vs}}_{\L\infty} + 4 \, \epsilon \, L_f \,
    \norma{\nabla^2 \eta}_{\L\infty} \norma{\rho_o}_{\L1},
  \end{align*}
  $\mathcal{K}_2$ as in~\eqref{eq:K2defroe}
  and $c_1, \, c_2 $ are defined in~\eqref{eq:c12}.
\end{proposition}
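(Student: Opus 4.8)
The plan is to transcribe, almost verbatim, the proof of Proposition~\ref{prop:bvroe} --- which follows~\cite[Lemma~2.7]{ACG2015} --- replacing everywhere the Roe numerical fluxes $V_1$, $F$ by the Lax--Friedrichs fluxes $F^n$, $G^n$ of~\eqref{eq:numFx}--\eqref{eq:numGy}. Since the two fractional steps~\eqref{eq:scheme1} and~\eqref{eq:scheme2} are treated identically (the second with $\vs_2$, $\beta$, \eqref{eq:CFL2} in the roles of $\vs_1$, $\alpha$, \eqref{eq:CFL1}), I would carry out the computation only for the first. The backbone is: fix $i,j$, take the difference of two consecutive instances of~\eqref{eq:scheme1}, add and subtract copies of $F^n$ in which only the interface point varies while the density arguments stay frozen, and split the outcome as $\mathcal A^n_{i,j}-\lambda_x\mathcal B^n_{i,j}$ --- with $\mathcal A^n_{i,j}$ gathering the terms in which $F^n$ sits at a single interface and $\mathcal B^n_{i,j}$ recording the spatial variation of $\vs_1$ and $J^n_1$. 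Writing $\mathcal A^n_{i,j}$ as a linear combination of $\rh{i+2,j}-\rh{i+1,j}$, $\rh{i+1,j}-\rh{i,j}$, $\rh{i,j}-\rh{i-1,j}$ and reading off the incremental coefficients from $F^n(x,u,w)=\tfrac12\bigl[\vs_1(x)(u+w)+J^n_1(x)(f(u)+f(w))\bigr]-\tfrac\alpha2(w-u)$ together with the mean value theorem for $f$, one finds coefficients of the form $\tfrac{\lambda_x}{2}\bigl(\alpha\pm\vs_1(x_\bullet)\pm J^n_1(x_\bullet)\,f'(r_\bullet)\bigr)$, which by~\eqref{eq:CFL1} --- using $\norma{\vs_1}_{\L\infty}+\epsilon L_f\leq\alpha$, $\norma{f'}_{\L\infty}=L_f$, $\norma{J^n_1}_{\L\infty}\leq\epsilon$ (from~\eqref{eq:Jinf}) and $f'\geq0$ --- lie in $\bigl[0,\tfrac13\bigr]$; hence the middle coefficient is also nonnegative and, exactly as in~\eqref{eq:AijOKroe}, $\sum_{i,j}\modulo{\mathcal A^n_{i,j}}\leq\sum_{i,j}\modulo{\rh{i+1,j}-\rh{i,j}}$.

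For $\mathcal B^n_{i,j}$ I would use that $x\mapsto\vs_1(x)$ and $x\mapsto J^n_1(x)$ are Lipschitz with constants $\norma{\partial_x\vs_1}_{\L\infty}$ and $2\,\epsilon\,\norma{\nabla^2\eta}_{\L\infty}\norma{\rho_o}_{\L1}$ respectively (the latter by~\eqref{eq:Jx}), that the discrete second differences $\vs_1(x_{i+3/2})-2\vs_1(x_{i+1/2})+\vs_1(x_{i-1/2})$ and the analogue for $J^n_1$ are of order $(\dx)^2$ with the constants $\norma{\partial^2_{xx}\vs_1}_{\L\infty}$ and those of~\eqref{eq:Jtripla} (in the notation~\eqref{eq:c12}), and that $0\leq f(r)\leq r$. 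Summing over $i,j$ and replacing $\dx\sum_{i,j}\rh{i,j}$ by $\norma{\rho_o}_{\L1}$ via Lemma~\ref{lem:l1} produces a bound $\sum_{i,j}\lambda_x\modulo{\mathcal B^n_{i,j}}\leq\dt\bigl(\norma{\partial_x\vs_1}_{\L\infty}+2\,\epsilon\,L_f\,\norma{\nabla^2\eta}_{\L\infty}\norma{\rho_o}_{\L1}\bigr)\sum_{i,j}\modulo{\rh{i+1,j}-\rh{i,j}}+\dt\,(\cdots)\,\norma{\rho_o}_{\L1}$, the prefactor being $1$ rather than the $3$ of the Roe case (so that $\tilde{\mathcal K}_1$ ends up one third of $\mathcal K_1$) thanks to the symmetric two-point form of $F^n$ and the fact that its numerical-viscosity part has a constant coefficient. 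The transverse term $\sum_{i,j}\dx\,\modulo{\rho^{n+1/2}_{i,j+1}-\rho^{n+1/2}_{i,j}}$ --- which grows because $F^n$ also depends on $y$ --- is handled in the same way, now with the $y$-Lipschitz constants ($\norma{\partial_y\vs_1}_{\L\infty}$, $\norma{\partial^2_{xy}\vs_1}_{\L\infty}$ and the $y$-analogues~\eqref{eq:J1y}, \eqref{eq:Jtriplabis}), and is controlled against $\sum_{i,j}\dy\,\modulo{\rh{i+1,j}-\rh{i,j}}$ as in~\eqref{eq:13}.

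Collecting these estimates for the $x$-sweep and the symmetric ones for the $y$-sweep~\eqref{eq:scheme2} yields the recursion $\tv(\rho^{n+1})\leq(1+\dt\,\tilde{\mathcal K}_1)\,\tv(\rho^n)+\dt\,\mathcal K_2$ --- where $\tv(\rho^n)$ denotes the left-hand side of the claimed estimate, $\tilde{\mathcal K}_1=2\,\norma{\nabla\boldsymbol{\vs}}_{\L\infty}+4\,\epsilon\,L_f\,\norma{\nabla^2\eta}_{\L\infty}\norma{\rho_o}_{\L1}$ and the additive remainder is bounded by $\mathcal K_2$ as in~\eqref{eq:K2defroe} --- and iterating it over $n$ (discrete Gronwall: $1+x\leq e^x$, then sum the geometric series) gives exactly the bound with $\tilde{\mathcal C}_x$ as in~\eqref{eq:Cxlf}. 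The main obstacle is this first splitting: one has to keep careful track of the two branches of the CFL condition~\eqref{eq:CFL1}--\eqref{eq:CFL2}, the $\tfrac1\alpha$ (resp.\ $\tfrac1\beta$) branch being exactly what confines the incremental coefficients to $\bigl[0,\tfrac13\bigr]$ so that $\mathcal A^n_{i,j}$ does not raise the total variation, while the other branch is what forces $\lambda_x\mathcal B^n_{i,j}$ and the transverse contribution to remain $O(\dt)$ perturbations; obtaining $\tilde{\mathcal K}_1$ in the sharp form above, rather than a larger multiple, is what requires exploiting the symmetry of the Lax--Friedrichs flux. Everything else is the bookkeeping already carried out in detail for the Roe scheme in Proposition~\ref{prop:bvroe}.
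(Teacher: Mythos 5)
Your proposal is correct and is essentially the route the paper intends: the paper omits this proof, stating only that it follows the Lax--Friedrichs computations of~\cite{ACG2015}, which is precisely the transposition of the Roe argument of Proposition~\ref{prop:bvroe} that you carry out, and your bookkeeping of the incremental coefficients $\tfrac{\lambda_x}{2}\bigl(\alpha\pm\vs_1\pm J^n_1 f'\bigr)\in[0,\tfrac13]$ and of the factor $\tfrac12\cdot 2=1$ (in place of $1+2=3$) coming from the symmetric flux correctly reproduces $\tilde{\mathcal K}_1=\mathcal K_1/3$ and the unchanged $\mathcal K_2$. The only slight inaccuracy is your attribution of the $O(\dt)$ size of $\lambda_x\mathcal B^n_{i,j}$ to the second branch of~\eqref{eq:CFL1}: that term is $O(\dt)$ simply because $\mathcal B^n_{i,j}=O(\dx)$ and $\lambda_x\,\dx=\dt$, so no CFL restriction is needed there, but this does not affect the validity of the argument.
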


\begin{remark}
Observe that $\tilde{\mathcal{K}_1} < \mathcal{K}_1$ in~\eqref{eq:K1defroe}.
\end{remark}

\begin{corollary} {\bf ($\BV$ estimate in space and time)}
  \label{cor:bvxt}
  Let $\rho_o \in (\L\infty \cap \BV) (\reali^2; \reali^+)$. Let~\ref{vs}, \ref{H}, \ref{eta}, \eqref{eq:CFL1}
  and~\eqref{eq:CFL2} hold. Then, for all $t>0$, $\rho_\Delta$
  in~\eqref{eq:rhodelta} constructed through Algorithm~\ref{alg:1} satisfies the following estimate: for all
  $n=1, \ldots, N_T$,
  \begin{displaymath}
    \sum_{m=0}^{n-1} \sum_{i,j \in \interi}
    \dt \left(
      \dy \, \modulo{\rho^m_{i+1,j} - \rho^m_{i,j}}
      + \dx \, \modulo{\rho^m_{i,j+1} - \rho^m_{i,j}}\right)
    +
    \sum_{m=0}^{n-1} \sum_{i,j \in \interi}
    \dx \, \dy\,  \modulo{\rho^{m+1}_{i,j} - \rho^m_{i,j}}
    \leq \tilde{\mathcal{C}}_{xt}(t^n),
  \end{displaymath}
  where
  \begin{displaymath}
    \tilde{\mathcal{C}}_{xt} (t) = t \left(\tilde{\mathcal{C}}_x (t) + 2 \, \tilde{\mathcal{C}}_t (t)\right),
  \end{displaymath}
  with $\tilde{\mathcal{C}}_x$ as in~\eqref{eq:Cxlf} and
  \begin{displaymath}
    \tilde{\mathcal{C}}_t (s) =
    2 \, \left(\norma{\boldsymbol{\vs}}_{\L\infty}
      + \epsilon \, L_f  \right)\tilde{\mathcal{C}_x} (s)
    +  \left(
      \norma{\nabla \boldsymbol{\vs}}_{\L\infty}\!\!
      + \epsilon \, \norma{\nabla^2 \eta}_{\L\infty} \norma{\rho_o}_{\L1}
    \right) \norma{\rho_o}_{\L1}.
  \end{displaymath}
\end{corollary}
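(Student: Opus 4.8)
The plan is to follow the proof of Corollary~\ref{cor:bvxtroe} almost verbatim, replacing the Roe numerical fluxes by the Lax--Friedrichs fluxes~\eqref{eq:numFx} and~\eqref{eq:numGy}. The first double sum in~\eqref{eq:bvxt} --- the space--time average of the spatial total variation --- is immediate from Proposition~\ref{prop:bv}: since $\tilde{\mathcal{C}}_x(t^m)\le\tilde{\mathcal{C}}_x(t^n)$ for $m\le n$, multiplying the bound of Proposition~\ref{prop:bv} by $\dt$ and summing over $m=0,\dots,n-1$ gives at most $n\,\dt\,\tilde{\mathcal{C}}_x(t^n)=t^n\,\tilde{\mathcal{C}}_x(t^n)$.

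For the second double sum, I would first split $\modulo{\rho^{m+1}_{i,j}-\rho^m_{i,j}}\le\modulo{\rho^{m+1}_{i,j}-\rho^{m+1/2}_{i,j}}+\modulo{\rho^{m+1/2}_{i,j}-\rho^m_{i,j}}$ and handle the two fractional steps separately. For the first half-step I would start from~\eqref{eq:scheme1} with the flux~\eqref{eq:numFx}, expand $\rho^{m+1/2}_{i,j}-\rho^m_{i,j}$, and add and subtract $v_{i-1/2,j}\,\rho^m_{i,j}$ and $J^m_1(x_{i-1/2,j})\,f(\rho^m_{i,j})$, as in the proof of Lemma~\ref{lem:Linfroe} and of Corollary~\ref{cor:bvxtroe}. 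Using a first-order Taylor expansion of $\vs_1$ to extract the factor $\dx\,\norma{\partial_x\vs_1}_{\L\infty}$ in front of $\rho^m_{i,j}$, the inequality $f(r)\le r$, the bounds~\eqref{eq:Jinf} and~\eqref{eq:Jx} on $J_1^m$, the Lipschitz constant $L_f$ of $f$, and absorbing the numerical viscosity $-\tfrac{\alpha}{2}(\rho^m_{i+1,j}-\rho^m_{i,j})$ via the CFL condition~\eqref{eq:CFL1}, one should arrive at a pointwise estimate of the form
\[
\modulo{\rho^{m+1/2}_{i,j}-\rho^m_{i,j}}\le \dt\left(\norma{\partial_x\vs_1}_{\L\infty}+\epsilon\,\norma{\nabla^2\eta}_{\L\infty}\norma{\rho_o}_{\L1}\right)\rho^m_{i,j}+\lambda_x\left(\norma{\vs_1}_{\L\infty}+\epsilon\,L_f\right)\!\left(\modulo{\rho^m_{i,j}-\rho^m_{i-1,j}}+\modulo{\rho^m_{i+1,j}-\rho^m_{i,j}}\right).
\]
Summing over $i,j\in\interi$ and using Lemma~\ref{lem:l1} on the term carrying $\rho^m_{i,j}$ and Proposition~\ref{prop:bv} on the terms carrying the spatial differences then yields $\sum_{i,j}\dx\,\dy\,\modulo{\rho^{m+1/2}_{i,j}-\rho^m_{i,j}}\le\dt\,\tilde{\mathcal{C}}_t(t^m)$, with $\tilde{\mathcal{C}}_t$ exactly as in the statement; the second half-step $\rho^{m+1}_{i,j}-\rho^{m+1/2}_{i,j}$ is treated identically starting from~\eqref{eq:scheme2}, \eqref{eq:numGy} and~\eqref{eq:CFL2}, contributing a further $\dt\,\tilde{\mathcal{C}}_t(t^m)$.

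Summing these over $m=0,\dots,n-1$ and using $\tilde{\mathcal{C}}_t(t^m)\le\tilde{\mathcal{C}}_t(t^n)$ bounds the second double sum in~\eqref{eq:bvxt} by $2\,n\,\dt\,\tilde{\mathcal{C}}_t(t^n)=2\,t^n\,\tilde{\mathcal{C}}_t(t^n)$, and adding the two contributions gives the claim with $\tilde{\mathcal{C}}_{xt}(t)=t\bigl(\tilde{\mathcal{C}}_x(t)+2\,\tilde{\mathcal{C}}_t(t)\bigr)$.

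I expect the main obstacle to be purely a matter of careful bookkeeping rather than of any new idea: one must verify that the numerical-viscosity terms $\tfrac{\alpha}{2}$ and $\tfrac{\beta}{2}$ in~\eqref{eq:numFx}--\eqref{eq:numGy} are genuinely absorbed into the above coefficients under~\eqref{eq:CFL1}--\eqref{eq:CFL2}, and that the constants produced by the half-step estimates reproduce exactly those appearing in $\tilde{\mathcal{C}}_x$ (cf.~\eqref{eq:Cxlf}) and in the stated $\tilde{\mathcal{C}}_t$ --- in particular, that the Lax--Friedrichs constants differ from the Roe ones only in the mild ways already recorded in the preceding remarks. Everything else transfers directly from the proof of Corollary~\ref{cor:bvxtroe}.
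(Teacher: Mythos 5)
The paper itself omits the computations for this corollary (it only records that they follow the Roe-scheme arguments and~\cite{ACG2015}), and your plan --- bound the first double sum by Proposition~\ref{prop:bv}, split each time increment into the two fractional steps, and redo the proof of Corollary~\ref{cor:bvxtroe} with the fluxes~\eqref{eq:numFx}--\eqref{eq:numGy}, using Lemma~\ref{lem:l1}, $f(r)\le r$, \eqref{eq:Jinf} and~\eqref{eq:Jx} --- is exactly the intended route.

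Two constants, however, do not come out the way you describe. First, the viscosity is not ``absorbed via the CFL condition'': expanding~\eqref{eq:numFx} over a half-step, the coefficient of each spatial increment is $\tfrac12\modulo{\vs_1}+\tfrac12 L_f\modulo{J_1^m}+\tfrac{\alpha}{2}\le\tfrac12\left(\norma{\vs_1}_{\L\infty}+\epsilon\,L_f\right)+\tfrac{\alpha}{2}$, and \eqref{eq:CFL1} only bounds $\alpha$ from \emph{below}; after summation the factor multiplying $\tilde{\mathcal{C}}_x$ is $\norma{\vs_1}_{\L\infty}+\epsilon\,L_f+\alpha$, so you recover the stated $2\left(\norma{\boldsymbol{\vs}}_{\L\infty}+\epsilon\,L_f\right)$ only by taking $\alpha$ and $\beta$ equal to the minimal values admitted by~\eqref{eq:CFL1}--\eqref{eq:CFL2}, and this choice must be made explicit (otherwise the constant has to carry $\alpha$, $\beta$). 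Second, the zeroth-order term: the central part of the flux produces $\left(J_1^m(x_{i+1/2,j})-J_1^m(x_{i-1/2,j})\right) f(\rho^m_{i,j})$, which by~\eqref{eq:Jx} is bounded by $2\,\epsilon\,\dx\,\norma{\nabla^2\eta}_{\L\infty}\norma{\rho_o}_{\L1}\,\rho^m_{i,j}$ --- the same factor $2$ that appears in the Roe constant $\mathcal{C}_t$ in~\eqref{eq:Ct}. Your displayed pointwise estimate with coefficient $\epsilon\,\norma{\nabla^2\eta}_{\L\infty}\norma{\rho_o}_{\L1}$ is therefore not what the computation gives; the honest bound yields $\tilde{\mathcal{C}}_t$ with $2\epsilon$ in that addend, and the $\epsilon$ in the statement looks like a dropped factor rather than something your argument can reach, so you should either carry the $2$ (obtaining a constant of the same form) or justify a genuinely sharper estimate of the $J_1^m$ increment. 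With these two adjustments the proof goes through exactly as you outline.
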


  \section{Numerical results}
\label{sec:results}

We consider the test setting given in \cite{original} to compare the results of the Roe scheme, cf.~Algorithm~\ref{alg:2}, to the results of the Lax-Friedrichs type scheme, cf.~Algorithm~\ref{alg:1}.


\subsection{Test setting}

A total number of $N=192$ parts in the shape of metal cylinders are transported on a conveyor belt moving with speed $v_T=$ 0.42 m/s
and are redirected by a diverter. The diverter is positioned at an angle of $\theta = 45$ degree with respect to the border of the conveyor belt. Figure~\ref{img:staticvelocityfield} illustrates the static velocity field of the conveyor belt. Parts are transported with velocity $\boldsymbol{\vs}= (v_T, 0)$ in Region A and the diverter redirects the parts (Region C). The area behind the diverter (Region B) is modelled in a way that should prohibit parts from passing through the diverter, see~\cite{original}. The point $(x_d,y_d)$ marks the end of the diverter.

\begin{figure}[H]
\centering
\setlength{\abovecaptionskip}{2pt}
\begin{tikzpicture}
\node[anchor=south west,inner sep=0] (Bild) at (0,0)
{\includegraphics[width=0.5\textwidth]{./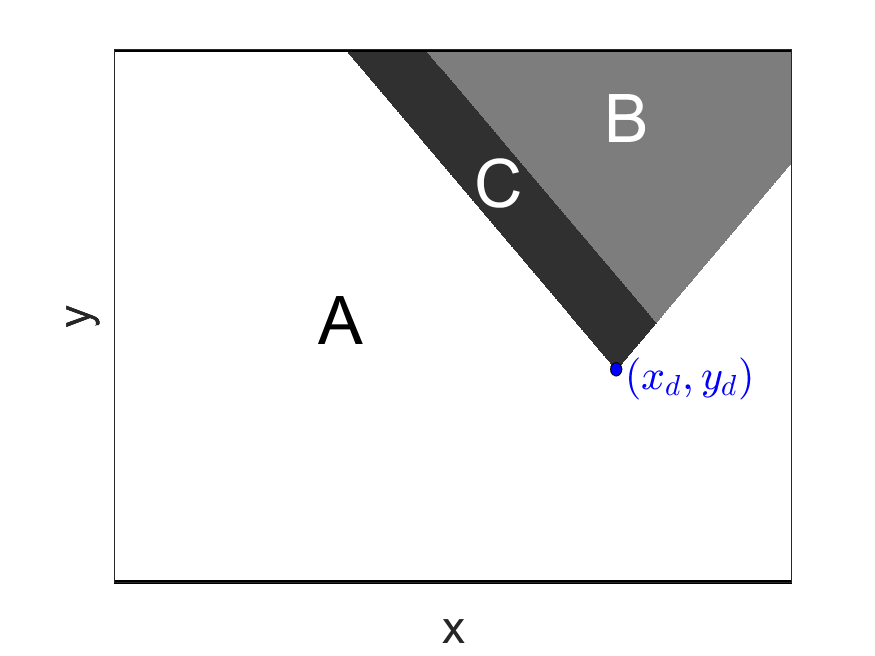}};
\begin{scope}[x=(Bild.south east),y=(Bild.north west)]
	\node (E5) at (0.2,0.4) {};
\node (E6) at (0.5,0.4) {};
\draw[->,thick] (E5) to node [below,text width = 2cm, align=center] {$v_T$}(E6);

  \draw
(0.7,0.58) coordinate (a) node[right] {}
(0.49,0.92) coordinate (b) node[left] {}
(0.7,0.92) coordinate (c) node[above right] {}
pic["$\theta$", very thick, draw=black, <->, angle eccentricity=0.7, angle radius=1cm]
{angle=a--b--c};

\end{scope}
\end{tikzpicture}
\caption{Schematic view of the static field of the conveyor belt.}
\label{img:staticvelocityfield}
\setlength{\abovecaptionskip}{10pt}
\end{figure}

\subsection{Discretisation and solution properties}

To numerically model the setting, we introduce a uniform grid $\Delta x = \Delta y $ on the selected area of the conveyor belt. Initial conditions for the density at time $t=0$ are given by the experimental data and normalised so that $\rho_{max}=1$. The mollifier $\eta$ which is used in the operator $\boldsymbol{I}(\rho)$~\eqref{eq:vdyn} is chosen as follows
\begin{align*}
\eta(x) = \frac{\sigma}{2 \pi} e^{-\frac{1}{2} \sigma \vert \vert x \vert \vert_2^2},
\end{align*}
with $\sigma = 10 000$.

In the original model formulation~\cite{original}, the Heaviside function was introduced to avoid densities larger than $\rho_{max}$. In this work, we investigate the performances of two numerical schemes with two types of smooth approximations of the Heaviside function, one sensibly closer than the other. The former is the approximation $H_t$ (atan), obtained using the inverse tangent
\begin{equation}
\label{eq:Ht}
    H_t(u) = \frac{\arctan(50 (u-1))}{\pi} + 0.5,
\end{equation}
while the latter is denoted $H_p$ (polynomial) and it is obtained by cubic spline interpolation with the following conditions
\begin{align*}
H_p(u) = \
& 0
& \forall u \leq  \
 d_l < 1&,
& H_p(u) = \
& 1
& \forall u  \geq \
d_r > 1, & \\
H_p(d_l) = \
& 0,
& H_p(1) = \
& \frac{1}{2},
& H_p(d_r) = \
& 1,
& H_p'(d_l) =\
& 0,
& H_p'(d_r) = \
& 0.
\end{align*}
The approximation $H_p$ for $d_l = \frac{1}{2}$ and $d_r = \frac{8}{5}$ together with the inverse tangent approximation are depicted in Figure~\ref{img:Heaviside_approx}.

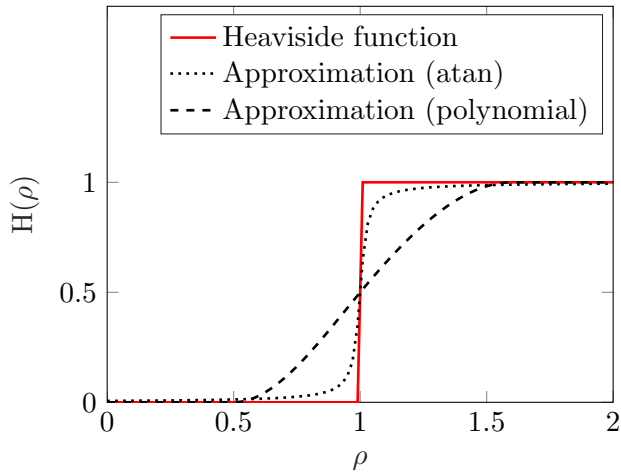
\begin{figure}[!h]
    \centering
    \setlength{\abovecaptionskip}{0pt}
%
%
\begin{tikzpicture}
\setlength\fwidth{0.45\textwidth}
\begin{axis}[%
width=0.951\fwidth,
height=0.75\fwidth,
at={(0\fwidth,0\fwidth)},
scale only axis,
xmin=0,
xmax=2,
xtick={  0, 0.5,   1, 1.5,   2},
xlabel style={font=\color{white!15!black}},
xlabel={$\rho$},
ymin=0,
ymax=1.8,
ytick={  0, 0.5,   1},
ylabel style={font=\color{white!15!black}},
ylabel={$\text{H(}\rho\text{)}$},
axis background/.style={fill=white},
legend style={legend cell align=left, align=left, draw=white!15!black}
]
\addplot [color=red, line width=1.0pt]
  table[row sep=crcr]{%
0	0\\
0.98989898989899	0\\
1.01010101010101	1\\
2	1\\
};
\addlegendentry{Heaviside function}

\addplot [color=black, dotted, line width=1.0pt]
  table[row sep=crcr]{%
0	0.00636534910097275\\
0.242424242424243	0.00840142953776679\\
0.404040404040404	0.0106782564627546\\
0.525252525252526	0.0134017260812009\\
0.606060606060606	0.0161464849887922\\
0.666666666666667	0.0190757242358361\\
0.707070707070707	0.021699206071486\\
0.747474747474747	0.0251576291011752\\
0.787878787878788	0.0299236126127123\\
0.808080808080808	0.0330519391150959\\
0.828282828282828	0.0369074510534668\\
0.848484848484849	0.0417753906954972\\
0.868686868686869	0.0481112952731233\\
0.888888888888889	0.0566887428985092\\
0.909090909090909	0.068930102930004\\
0.92929292929293	0.0877440811163361\\
0.949494949494949	0.120019809612001\\
0.96969696969697	0.18569339545891\\
0.98989898989899	0.351132881700148\\
1.01010101010101	0.648867118299854\\
1.03030303030303	0.81430660454109\\
1.05050505050505	0.879980190387999\\
1.07070707070707	0.912255918883664\\
1.09090909090909	0.931069897069996\\
1.11111111111111	0.943311257101491\\
1.13131313131313	0.951888704726877\\
1.15151515151515	0.958224609304503\\
1.17171717171717	0.963092548946533\\
1.19191919191919	0.966948060884904\\
1.23232323232323	0.972665063811102\\
1.27272727272727	0.97669898450894\\
1.31313131313131	0.979696818503264\\
1.37373737373737	0.982982351675549\\
1.45454545454545	0.986003392826632\\
1.55555555555556	0.988545790606897\\
1.6969696969697	0.990868396116429\\
1.8989898989899	0.992919667114612\\
2	0.993634650899027\\
};
\addlegendentry{Approximation (atan)}

\addplot [color=black, dashed, line width=1.0pt]
  table[row sep=crcr]{%
0	0\\
0.505050505050505	8.2004477485853e-05\\
0.525252525252526	0.00201849152941502\\
0.545454545454545	0.0064374624342598\\
0.565656565656566	0.0132174948263373\\
0.585858585858586	0.0222371663399632\\
0.606060606060606	0.0333750546094556\\
0.626262626262626	0.0465097372691305\\
0.646464646464647	0.0615197919533053\\
0.666666666666667	0.0782837962962963\\
0.686868686868687	0.0966803279324209\\
0.707070707070707	0.116587964495996\\
0.727272727272728	0.137885283621337\\
0.747474747474747	0.160450862942763\\
0.767676767676768	0.18416328009459\\
0.787878787878788	0.208901112711133\\
0.828282828282828	0.260967334875641\\
0.868686868686869	0.315678150510822\\
0.92929292929293	0.400578095321648\\
1.01010101010101	0.513956865151876\\
1.05050505050505	0.568760577409644\\
1.09090909090909	0.621695191585274\\
1.13131313131313	0.672452573382019\\
1.17171717171717	0.720724588503132\\
1.21212121212121	0.766203102651863\\
1.23232323232323	0.787798504893853\\
1.25252525252525	0.808579981531466\\
1.27272727272727	0.828509015777611\\
1.29292929292929	0.847547090845193\\
1.31313131313131	0.86565568994712\\
1.33333333333333	0.882796296296296\\
1.35353535353535	0.89893039310563\\
1.37373737373737	0.914019463588028\\
1.39393939393939	0.928024990956396\\
1.41414141414141	0.940908458423641\\
1.43434343434343	0.952631349202668\\
1.45454545454545	0.963155146506386\\
1.47474747474747	0.9724413335477\\
1.4949494949495	0.980451393539517\\
1.51515151515152	0.987146809694744\\
1.53535353535354	0.992489065226286\\
1.55555555555556	0.996439643347051\\
1.57575757575758	0.998960027269945\\
1.5959595959596	1.00001170020787\\
2	1\\
};
\addlegendentry{Approximation (polynomial)}

\end{axis}
\end{tikzpicture}%
    \caption{The Heaviside function and approximations $H_t$ (atan) and $H_p$ (polynomial).}
    \label{img:Heaviside_approx}
    \setlength{\abovecaptionskip}{10pt}
\end{figure}
Using the inverse tangent approximation corresponds to activating the collision operator $\boldsymbol{I}(\rho)$ very close to $\rho_{max}$. On the other hand, with the polynomial approximation, the collision operator starts activating at $\frac{1}{2} \rho_{max}$, which implies that clusters with densities values between $\frac{1}{2} \rho_{max}$ and $\rho_{max}$ are already dispersed to some extent. Numerically, this means that densities above the maximum one are more likely to appear when using the inverse tangent approximation, while exploiting the polynomial approximation prevents from reaching such high values of the density.

Clearly, different approximations of the Heaviside function lead to different Lipschitz constants $L_f$~\eqref{eq:L} and therefore influence the CFL time steps of the Roe scheme~\eqref{eq:CFLroe-v2} and of the Lax-Friedrichs scheme~\eqref{eq:CFL1}--\eqref{eq:CFL2}. Moreover, the constants $\alpha$ and $\beta$ given by the CFL condition for the Lax-Friedrichs scheme depend on the Lipschitz constant $L_f$. Larger Lipschitz constants, and therefore higher viscosity coefficients $\alpha$ and $\beta$, add additional viscosity to the Lax-Friedrichs scheme and therefore more diffusion, as shown in~\cite{leveque}. Note that in general, the Lax-Friedrichs scheme is more diffusive than the Roe scheme. To ensure conservation of mass within the given area of the conveyor belt, we impose zero--flux--conditions at the boundaries of the conveyor belt for the Lax-Friedrichs scheme. Therefore, at the boundary, the flux that would exit the domain is set to zero.

The Lipschitz constants of the approximations of the Heaviside functions depicted in Figure~\ref{img:Heaviside_approx}, as well as the corresponding CFL conditions, are displayed in Table~\ref{tab:Heaviside_approx_CFL}, for a fixed space step size $\Delta x$. The inverse tangent approximation has a greater Lipschitz constant, leading to small CFL time steps and thus to an increased computational effort.

\begin{table}[H]
    \centering
    \begin{tabular}{l |c | c | c | c}
         Approximation & $L_f$ & $\Delta x =  \Delta y$ [m] & CFL time step Roe [s] & CFL time step LxF [s]  \\
         \toprule
         $H_{t}$ (atan)  & $16.42$ & $1 \cdot 10^{-2}$ &$2.37 \cdot 10^{-4}$& $1.21\cdot 10^{-4}$ \\
         $H_p$ (polynomial) &  $2.09$ & $1 \cdot 10^{-2}$ & $1.63 \cdot 10^{-3}$ & $9.50 \cdot 10^{-4}$\\
        \bottomrule
    \end{tabular}
    \caption{CFL time steps for different Heaviside approximations and fixed  $\Delta x, \Delta y$.}
    \label{tab:Heaviside_approx_CFL}
\end{table}

We analyze the amount of parts that pass the obstacle, i.e. the outflow at the end of the obstacle $(x_d,y_d)$. The time-dependent mass function $U(t)$ counts the measured parts that are located in the region $\Omega_0$
\begin{align}
    U(t) &= \frac{1}{N}\sum_{i=1}^{N} \mathcal{X}_{\Omega_0}(x^{(i)}(t),y^{(i)}(t)) & \mathcal{X}_{\Omega_0}(x,y) = \begin{cases} 1, &(x,y) \in \Omega_0 \\ 0, & \text{otherwise}\end{cases}
\end{align}
where $\Omega_0 = \{(x,y) \in \mathbb{R}^2 \vert ~ x \leq x_d\}$ is the left sided region upstream the obstacle and $(x^{(i)}(t),y^{(i)}(t))$ is the position of part $i$, $i \in {1, \dots, N}$, at time $t$. The time-dependent mass function describing the outflow to the solution of the conservation law is given by
\begin{align}
    U_{\rho}(t) &= \frac{1}{\int_{\Omega_0} \rho(x,0) \, \mathrm{dx}} \, \int_{\Omega_0} \rho(x,t) \, \mathrm{dx}.
\end{align}
The outflow curves obtained using Roe scheme, Lax-Friedrichs scheme and the outflow measured experimentally are shown in Figure~\ref{img:LXFvsROEvsData_outflow}. The parameters chosen for each scheme are those given in Table~\ref{tab:Heaviside_approx_CFL}. Figure~\ref{img:Linf} displays the $\L\infty$ norms of the solution over time.
\begin{figure}[H]
    \setlength{\abovecaptionskip}{0pt}
	\begin{minipage}[t]{.45\linewidth}
		\centering
	    \input{./zzzOutflow}
	    \caption{Outflow.}
	    \label{img:LXFvsROEvsData_outflow}
	\end{minipage}
	\hfill
	\begin{minipage}[t]{.45\linewidth}
	    \centering
	    \input{./zzzLinf.tex}
	    \caption{Time evolution of the $\L\infty$ norm of the approximate solutions.}
	    \label{img:Linf}
	\end{minipage}
	\setlength{\abovecaptionskip}{10pt}
\end{figure}

The outflow curve given by Roe scheme for both approximations of the Heaviside function is closer to the experimental data, due to the fact that the scheme captures more congestion, as indicated also by the $\L\infty$ norm. With Roe scheme, as expected, the density piles up even more when using the inverse tangent approximation. We observe that a maximum principle is not verified. On the contrary, with the polynomial approximation, higher densities are avoided, since the collision operator is activated earlier. Due to the influence of the viscosity coefficients, an opposite behaviour is observable with the Lax-Friedrichs scheme. Results for the $\L\infty$ norm of the solution are quite promising using the polynomial approximation, whereas the viscosity of the scheme is too large in the case of the inverse tangent approximation. The $\L\infty$ norm of the solution is constantly decreasing over time because of diffusion.

Figure~\ref{img:LXFvsROEvsData_densityplot} displays the parts' positions in the experiment and the density distribution computed with Roe and Lax-Friedrichs scheme using the polynomial Heaviside approximation at time $t=1.5 \mbox{s}$. The density plot of the Roe scheme matches the experimental data quite well: regions with higher densities mostly coincide with regions in the experiments, where the parts are side by side. In contrast, the Lax-Friedrichs scheme produces a more widely spread density distribution. Even the parts on the upper section of the belt, which are transported to the right with the velocity of the conveyor belt, are not correctly portrayed.


\begin{figure}[H]
	\begin{minipage}[c]{.3\linewidth}
		\centering
	    \scalebox{-1}[1]{\includegraphics[width=\textwidth, angle=180]{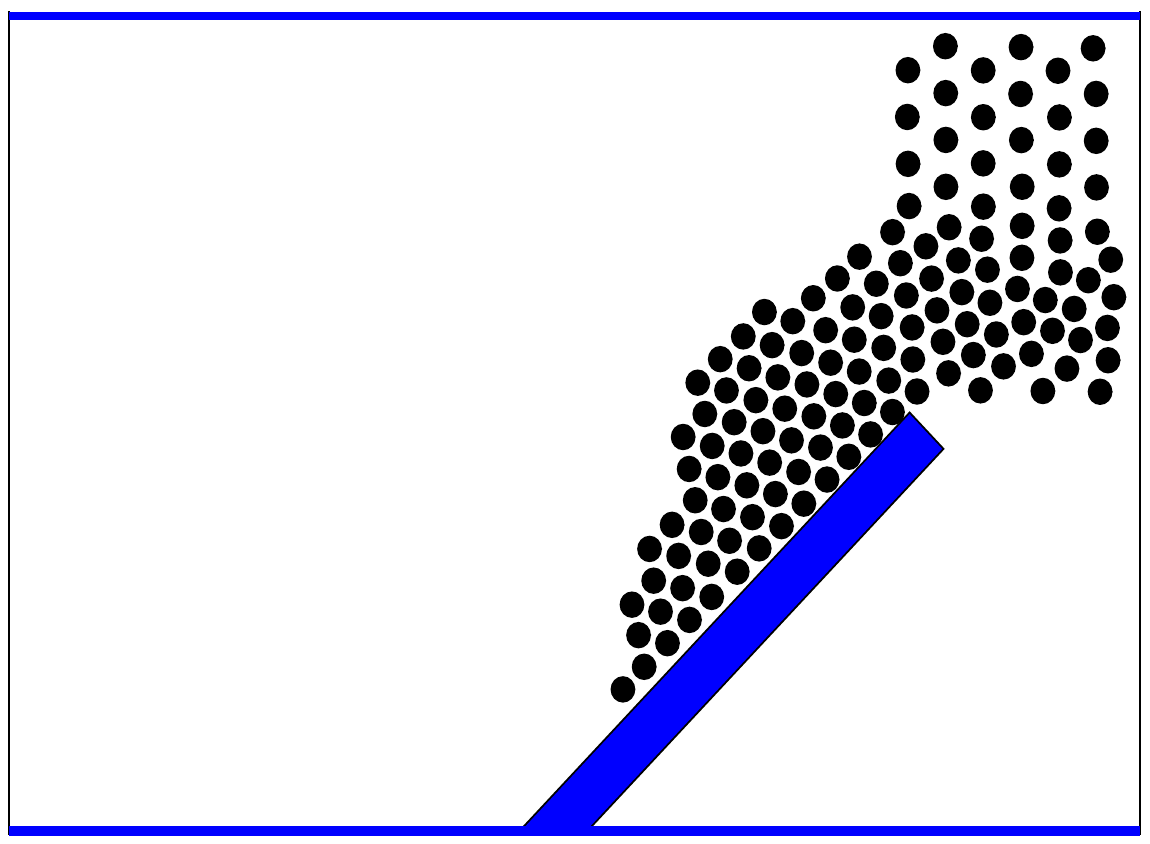}}
	    \label{img:Density_Data}
	\end{minipage}
	\hfill
	\begin{minipage}[c]{.3\linewidth}
	    \centering
	    \scalebox{-1}[1]{\includegraphics[width=\textwidth, angle=180]{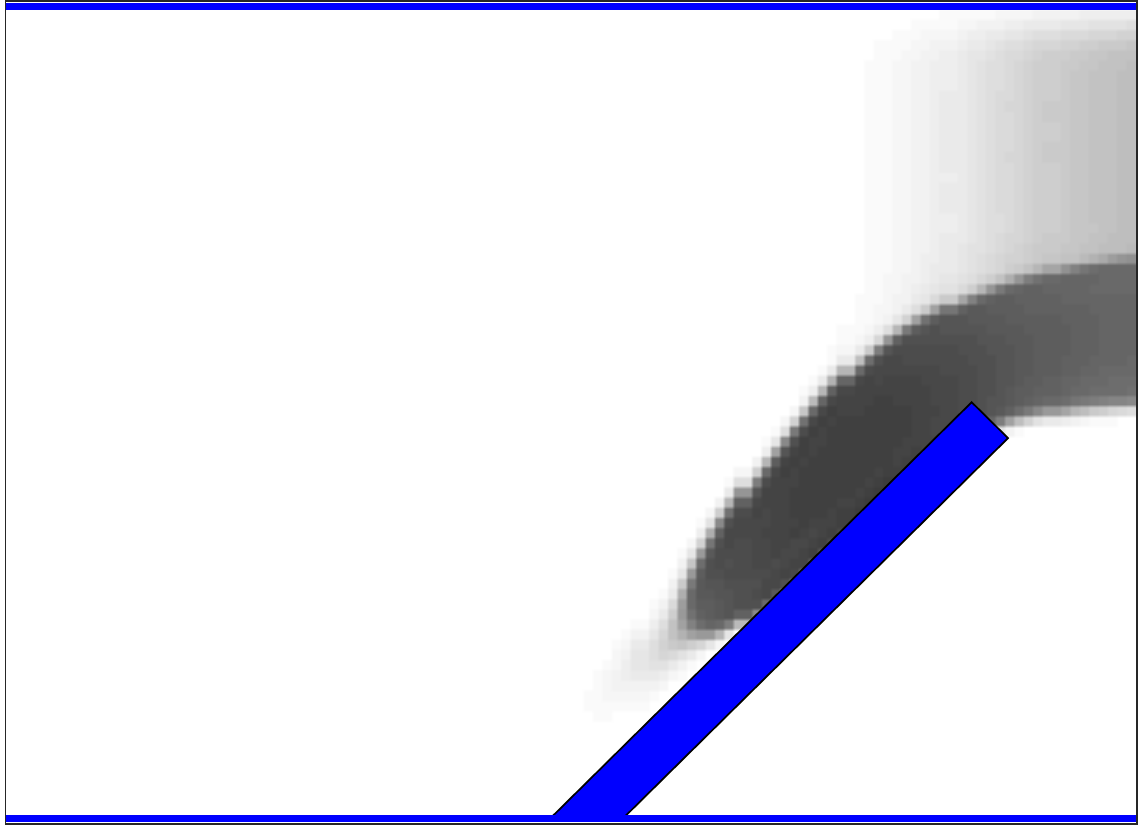}}
	    \label{img:Density_ROE}
	\end{minipage}
	\hfill
		\begin{minipage}[c]{.3\linewidth}
		\centering
	    \scalebox{-1}[1]{\includegraphics[width=\textwidth, angle = 180]{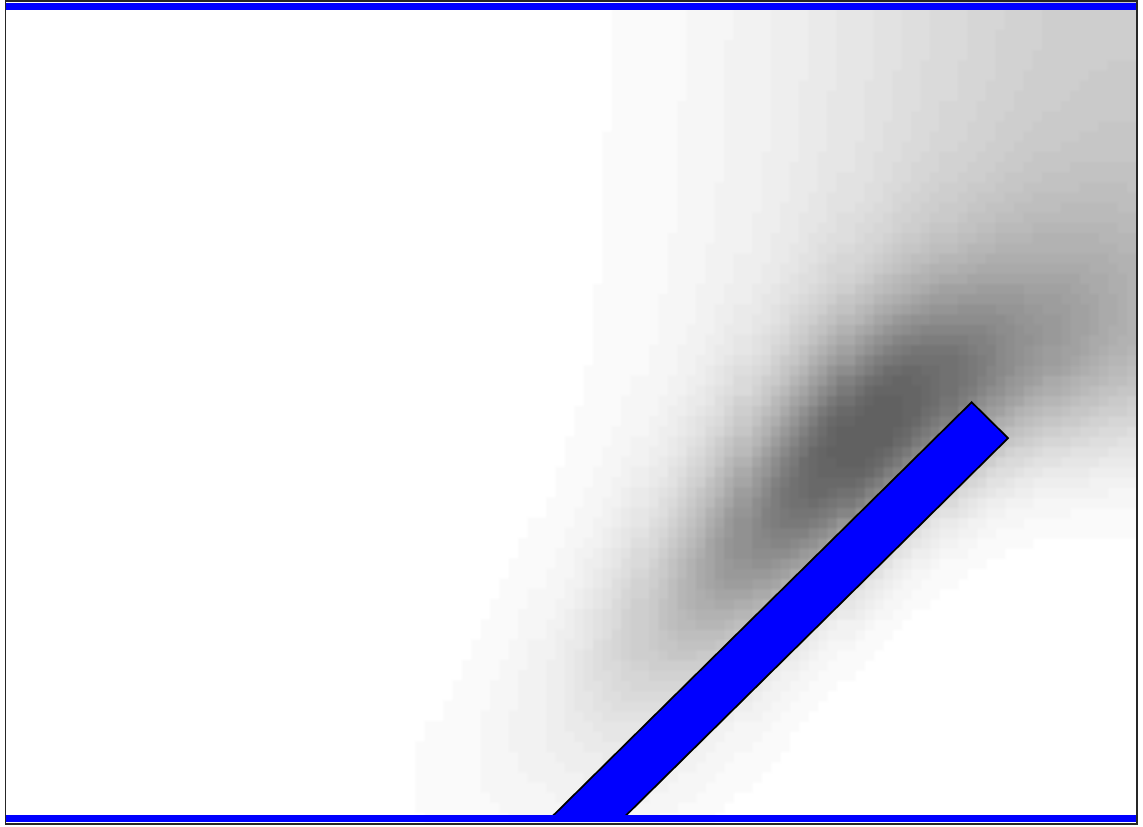}}
	    \label{img:Density_LXF}
	\end{minipage}
	\caption{Experimental data (left), results of the Roe scheme (middle) and the Lax-Friedrichs scheme (right)  at $t = 1.5$ s.}
	\label{img:LXFvsROEvsData_densityplot}
	\setlength{\abovecaptionskip}{10pt}
\end{figure}

Since the Roe scheme using the sharper approximation of the Heaviside function provides the best result in comparison to the experimental data, we analyse its behaviour for $\Delta x, \Delta y \rightarrow 0 $. Table~\ref{tab:convergence_ROE} shows the error norms of the outflow of the simulations with the Roe scheme and the inverse tangent approximation of the Heaviside function compared to to the outflow given by the experimental data.

\begin{table}[H]
    \centering
    \begin{tabular}{c |c | c | c | c}
         $\Delta x = \Delta y$ &  CFL time step & $\L1$-error & $\L2$-error &$\L\infty$-error \\
         \toprule
         $4 \cdot 10^{-2}$ & $9.47 \cdot 10^{-4}$ & $0.42$ & $0.26$ & $0.20$  \\
         $2 \cdot 10^{-2}$ & $4.73 \cdot 10^{-4}$ & $0.16$ & $0.10$ & $0.10$ \\
         $1 \cdot 10^{-2}$ & $2.37 \cdot 10^{-4}$ & $0.09$ & $0.07$ & $0.09$   \\
         $5 \cdot 10^{-3}$ & $1.18 \cdot 10^{-4}$ & $0.07$ & $0.05$ & $0.07$\\
        \bottomrule
    \end{tabular}
    \caption{Error norms of Roe scheme, with the inverse tangent approximation of the Heaviside function~\eqref{eq:Ht}, against experimental data. }
    \label{tab:convergence_ROE}
\end{table}

The scheme is evaluated for different space step sizes and their corresponding CFL time steps. We observe that the error of the Roe scheme decreases as the space step decreases, suggesting the convergence of the outflow to the experimental data, compare also Figure~\ref{img:ROE_convergence}. 

\begin{figure}[H]
        \setlength{\abovecaptionskip}{0pt}
		\centering
	    \input{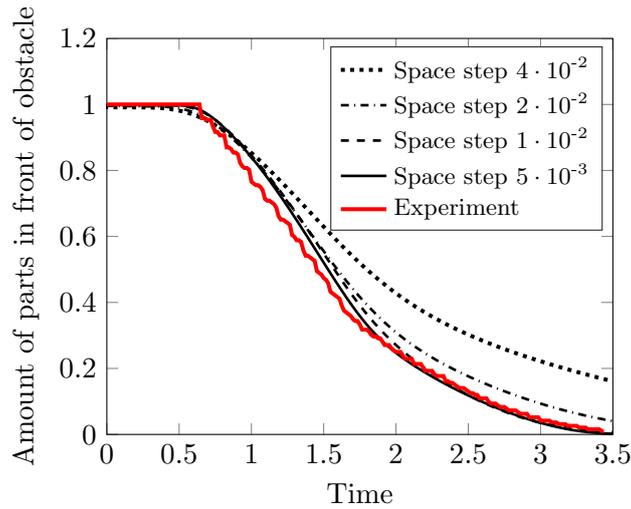}
	    \caption{Outflow computed by Roe scheme with different space step sizes using the inverse tangent approximation of the Heaviside function~\eqref{eq:Ht}.}
	    \label{img:ROE_convergence}
	    \setlength{\abovecaptionskip}{10pt}
\end{figure}

\section*{Acknowledgments}
This work was financially supported by the project ``Non-local conservation laws for engineering applications'' co-funded by DAAD (Project-ID 57445223) and the PHC Procope (Project no.~42503RM) and the DFG project GO 1920/7-1. Elena Rossi was  partially supported by University of Mannheim and acknowledges its hospitality during the accomplishment of this work.

  \begin{appendices}
    \section{Technical Lemma}
    \label{sec:teclem}
    \begin{lemma}
      Let $\eta \in (\C3 \cap \W3\infty) (\reali^2; \reali)$. Then,
      for $n=0, \ldots, N_T$, for $i, j \in \interi$, the following
      estimates hold:
      \begin{align}
        \label{eq:Jinf}
        \norma{J^n_k}_{\L\infty} \leq \
        & \epsilon \quad \mbox{ for } k=1,2,
        \\
        \label{eq:Jx}
        \modulo{J^n_1 (x_{i+1/2,j}) - J_1^n (x_{i-1/2,j})}
        \leq \
        & 2 \, \epsilon \, \dx \,  \norma{\nabla^2 \eta}_{\L\infty} \norma{\rho^n}_{\L1},
        \\
        \label{eq:J1y}
        \modulo{J_1^n (x_{i+1/2,j}) - J_1^n(x_{i+1/2,j+1})}
        \leq \
        & 2 \, \epsilon \, \dy \,  \norma{\nabla^2 \eta}_{\L\infty} \norma{\rho^n}_{\L1},
        \\
        \nonumber
        \modulo{J^n_2 (x_{i,j+1/2}) - J_2^n (x_{i,j-1/2})}
        \leq \
        & 2 \, \epsilon \, \dy \,  \norma{\nabla^2 \eta}_{\L\infty} \norma{\rho^n}_{\L1},
        \\
        \nonumber
        \modulo{J^n_2 (x_{i+1,j+1/2}) - J_2^n (x_{i,j+1/2})}
        \leq \
        & 2 \, \epsilon \, \dx \, \norma{\nabla^2 \eta}_{\L\infty} \norma{\rho^n}_{\L1},
        \\
        \label{eq:Jtripla}
        \modulo{J^n_1 (x_{i+3/2,j}) -2 \,J^n_1 (x_{i+1/2,j}) - J_1^n (x_{i-1/2,j})}
        \leq \
        &  2 \, \epsilon \,  (\dx)^2 \left(
          c_1  \norma{\rho^n}_{\L1} + c_2 \norma{\rho^n}^2_{\L1}
          \right),
        \\
        \nonumber
        \modulo{J^n_2 (x_{i,j+3/2}) -2 \,J^n_2 (x_{i,j+1/2}) - J_2^n (x_{i,j-1/2})}
        \leq \
        &  2 \, \epsilon \,  (\dy)^2 \left(
          c_1  \norma{\rho^n}_{\L1} + c_2 \norma{\rho^n}^2_{\L1}
          \right),
        \\
        \label{eq:Jtriplabis}
        \left|J_1^n (x_{i+1/2,j}) \!-\! J_1^n(x_{i+1/2,j+1})\! -\! J_1^n (x_{i-1/2,j}) \!-\!
        J_1^n \right.\! &\! \left.(x_{i-1/2,j+1})\right|
                          \leq \ 2 \, \epsilon \,  \dx \, \dy  \, C ,
        \\
        \nonumber
        \left|J_2^n (x_{i,j+1/2}) \!-\! J_2^n(x_{i+1,j+1/2})\! -\! J_2^n (x_{i,j-1/2}) \!-\!
        J_2^n \right. \! &\! \left.(x_{i+1,j-1/2})\right|
                           \leq \ 2 \, \epsilon \,  \dx \, \dy  \, C ,
      \end{align}
      where we set
      \begin{align}
        \label{eq:c12}
        C = \
        &
          c_1 \norma{\rho^n}_{\L1} + c_2 \norma{\rho^n}^2_{\L1},
        &
          c_1 = \
        & 2 \, \norma{\nabla^3 \eta}_{\L\infty},
        &
          c_2 = \
        & 3 \, \norma{\nabla^2 \eta}^2_{\L\infty}.
      \end{align}
    \end{lemma}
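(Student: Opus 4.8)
\noindent\emph{Proof plan.}
The argument factors each $J^n_k$ through one smooth, globally bounded map. By~\eqref{eq:vdyn} and the quadrature~\eqref{eq:conv} we may write $\bigl(J^n_1(x,y),J^n_2(x,y)\bigr)=-\epsilon\,\Phi\bigl(W^n(x,y)\bigr)$, where
\begin{displaymath}
  \Phi\colon\reali^2\to\reali^2,\qquad \Phi(w)=\frac{w}{\sqrt{1+\norma{w}^2}},
\end{displaymath}
and $W^n$ is the grid convolution~\eqref{eq:conv} of $\nabla\eta$ with $\rho^n$, regarded as a function of an arbitrary $(x,y)\in\reali^2$ so that it may be differentiated classically. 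Two elementary facts about $\Phi$ drive everything: $\norma{\Phi(w)}<1$ for every $w$ (hence $\modulo{\Phi_k(w)}<1$); and $\Phi\in\C\infty$ with $\norma{D\Phi(w)}\leq1$ for all $w$, since its symmetric Jacobian has eigenvalues $\tfrac1{\sqrt{1+\norma{w}^2}}$ and $\tfrac1{(1+\norma{w}^2)^{3/2}}$, both in $(0,1]$, and with $\norma{D^2\Phi}$ bounded by an absolute constant (a direct computation gives $\modulo{\partial^2_{w_mw_l}\Phi_p}\leq\tfrac32$). On the other hand, differentiating~\eqref{eq:conv} termwise and using $\dx\,\dy\sum_{k,\ell}\modulo{\rho^n_{k,\ell}}=\norma{\rho^n}_{\L1}$ together with assumption~\ref{eta} gives
\begin{displaymath}
  \norma{\nabla^\ell W^n}_{\L\infty(\reali^2)}\leq\norma{\nabla^{\ell+1}\eta}_{\L\infty}\,\norma{\rho^n}_{\L1}\qquad(\ell=0,1,2).
\end{displaymath}

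The bound~\eqref{eq:Jinf} is then immediate from $\modulo{\Phi_k}<1$. For every other inequality the recipe is the same: write the discrete difference of $J^n_1$ (or $J^n_2$) as a derivative of $g:=\Phi_1\circ W^n$ evaluated at an intermediate point, expand by the chain rule, and insert the bounds above. For the first differences, the $1$-Lipschitz continuity of $\Phi$ and the mean value theorem applied to $\nabla\eta$ inside~\eqref{eq:conv} give
\begin{displaymath}
  \modulo{J^n_1(x_{i+1/2,j})-J^n_1(x_{i-1/2,j})}\leq\epsilon\,\norma{W^n(x_{i+1/2,j})-W^n(x_{i-1/2,j})}\leq\epsilon\,\dx\,\norma{\nabla^2\eta}_{\L\infty}\norma{\rho^n}_{\L1},
\end{displaymath}
since $x_{i+1/2}-x_{i-1/2}=\dx$; the inequalities~\eqref{eq:Jx}, \eqref{eq:J1y} and the remaining first differences are verbatim copies with $x\leftrightarrow y$ and $J_1\leftrightarrow J_2$. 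For the symmetric second differences along one axis, the three abscissae being equally spaced with step $\dx$ yields
\begin{displaymath}
  J^n_1(x_{i+3/2,j})-2\,J^n_1(x_{i+1/2,j})+J^n_1(x_{i-1/2,j})=(\dx)^2\,\partial^2_x g(\xi,y_j)
\end{displaymath}
for some intermediate $\xi$, and the Leibniz expansion
\begin{displaymath}
  \partial^2_x g=\sum_{m,l}\partial^2_{w_mw_l}\Phi_1(W^n)\,\partial_xW^n_m\,\partial_xW^n_l+\sum_m\partial_{w_m}\Phi_1(W^n)\,\partial^2_xW^n_m
\end{displaymath}
produces precisely the shape $2\,\epsilon\,(\dx)^2\bigl(c_1\norma{\rho^n}_{\L1}+c_2\norma{\rho^n}_{\L1}^2\bigr)$ with $c_1,c_2$ as in~\eqref{eq:c12}: the first sum, via $\norma{\nabla W^n}\leq\norma{\nabla^2\eta}_{\L\infty}\norma{\rho^n}_{\L1}$, contributes the $c_2\norma{\rho^n}_{\L1}^2$ term, and the second, via $\norma{\nabla^2W^n}\leq\norma{\nabla^3\eta}_{\L\infty}\norma{\rho^n}_{\L1}$, the $c_1\norma{\rho^n}_{\L1}$ term. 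The mixed second differences are handled identically, now writing
\begin{displaymath}
  J^n_1(x_{i+1/2,j})-J^n_1(x_{i+1/2,j+1})-J^n_1(x_{i-1/2,j})+J^n_1(x_{i-1/2,j+1})=-\dx\,\dy\,\partial^2_{xy}g(\xi,\zeta)
\end{displaymath}
and expanding $\partial^2_{xy}g$ by the same chain rule.

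All the computations above are routine; the only point that needs a little care is to check that the absolute bounds on $D\Phi$ and $D^2\Phi$ are sharp enough to give exactly the coefficients $c_1=2\norma{\nabla^3\eta}_{\L\infty}$ and $c_2=3\norma{\nabla^2\eta}_{\L\infty}^2$ of~\eqref{eq:c12} (and not larger constants) — this is where the $\W3\infty$ part of assumption~\ref{eta} enters, ensuring $\norma{\nabla^3\eta}_{\L\infty}<\infty$ — while keeping track of the harmless factor $\leq2$ coming from the choice of norm on the Hessian and third differential of $\eta$. No genuine obstacle is expected.
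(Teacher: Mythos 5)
Your proposal is correct, but it organizes the argument differently from the paper. You factor everything through the single composition $J^n=-\epsilon\,\Phi(W^n)$ with $\Phi(w)=w/\sqrt{1+\norma{w}^2}$, use the global facts $\norma{\Phi}<1$, $\norma{D\Phi}\le 1$, $\norma{D^2\Phi}$ bounded by an absolute constant (your per-entry bound $3/2$ indeed holds; the true values are even smaller), together with $\norma{\nabla^\ell W^n}_{\L\infty}\le\norma{\nabla^{\ell+1}\eta}_{\L\infty}\norma{\rho^n}_{\L1}$ from termwise differentiation of the quadrature sum, and then convert each discrete (first, pure second, mixed second) difference into a pointwise derivative of $\Phi_1\circ W^n$ via the mean-value representations $g(x+h)-2g(x)+g(x-h)=h^2\partial_x^2g(\xi)$ and its mixed analogue, expanded by the chain rule. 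The paper instead never composes: it works directly on the quotients, splitting each difference into a numerator part (handled by the fundamental theorem of calculus inside the convolution sum) and a denominator part $1/D_\pm$ (handled by the one-variable mean value theorem applied to $b\circ a$ with $a=\nabla\eta*\rho^n$, $b(z)=\sqrt{1+\norma z^2}$), and for the triple differences performs explicit $\pm$ insertions and the algebraic identity for $1/D_3-2/D_1+1/D_{-1}$. Your route is shorter and more systematic, and it makes transparent where $c_1$ and $c_2$ come from (second derivatives of $W^n$ versus products of first derivatives); the paper's route avoids invoking the second-difference mean value theorem and Hessian bounds on $\Phi$, at the price of longer elementary manipulations. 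The constants match: as you note, the only bookkeeping is the choice of norm on $\nabla^2\eta$, $\nabla^3\eta$ and the resulting factors $\le 2$, which stay within the stated bounds (your estimates are in fact slightly sharper than~\eqref{eq:Jx}, \eqref{eq:Jtripla}, \eqref{eq:Jtriplabis}); also, like the paper's own proof, you read the left-hand sides of~\eqref{eq:Jtripla}--\eqref{eq:Jtriplabis} as genuine second differences, consistently with how they are used.
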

    \begin{proof}
      The proof of~\eqref{eq:Jinf} is immediate.

      Pass now to~\eqref{eq:Jx}. For the sake of simplicity, introduce
      the following notation:
      \begin{align*}
        D_+ =\
        & \sqrt{1 + \norma{(\nabla \eta * \rho^n) (x_{i+1/2}, y_j)}^2},
        & D_- =\
        & \sqrt{1 + \norma{(\nabla \eta * \rho^n) (x_{i-1/2}, y_j)}^2}.
      \end{align*}
      Hence,
      \begin{align}
        \label{eq:2original}
        & \modulo{J^n_1 (x_{i+1/2}, y_j) - J_1^n
          (x_{i-1/2},y_j)}
        \\
        \nonumber = \
        & \epsilon \, \modulo{\frac{\dx \, \dy}{D_+} \,
          \sum_{k, \ell \in \interi} \rh{k,\ell} \, \partial_1
          \eta(x_{i+1/2-k},y_{j-\ell}) - \frac{\dx \, \dy}{D_-} \,
          \sum_{k, \ell \in \interi} \rh{k,\ell} \, \partial_1
          \eta(x_{i-1/2-k},y_{j-\ell})}
        \\ \label{eq:2} \leq \
        & \epsilon\,
          \modulo{ \frac{\dx \, \dy}{D_+} \, \sum_{k, \ell \in \interi}
          \rh{k,\ell} \, \left(
          \partial_1 \eta(x_{i+1/2-k},y_{j-\ell}) -
          \partial_1 \eta(x_{i-1/2-k},y_{j-\ell}) \right) }
        \\\label{eq:2b}
        & + \epsilon \, \dx \, \dy \,
          \modulo{\frac{1}{D_+} - \frac{1}{D_-}} \, \sum_{k, \ell \in
          \interi} \modulo{\rh{k,\ell}} \modulo{\partial_1
          \eta(x_{i-1/2-k},y_{j-\ell})}.
      \end{align}
      Consider~\eqref{eq:2}: since $D_+ \geq 1$ and
      \begin{displaymath}
        \modulo{ \partial_1 \eta(x_{i+1/2-k},y_{j-\ell})- \partial_1 \eta(x_{i-1/2-k},y_{j-\ell})}
        \leq
        \int_{x_{i-1/2-k}}^{x_{i+1/2-k}}\modulo{\partial_{11}^2 \eta (x,y_{j-\ell})} \d{x},
      \end{displaymath}
      we obtain
      \begin{equation}
        \label{eq:2ok}
        [\eqref{eq:2}]
        \leq
        \epsilon \, \dx \, \norma{\partial_{11}^2 \eta}_{\L\infty} \norma{\rho^n}_{\L1}.
      \end{equation}
      On the other hand, to estimate~\eqref{eq:2b}, compute
      \begin{displaymath}
        \modulo{\frac{1}{D_+} - \frac{1}{D_-}} = \frac{\modulo{D_+ - D_-}}{D_+ \, D_-}.
      \end{displaymath}
      Introduce $a(x) = \nabla \eta*\rho^n (x)$ and
      $b(z) = (1+\norma{z}^2)^{1/2}$, for $z \in \reali^2$. In
      particular compute
      $b' (z) = \dfrac{\norma{z}}{(1+\norma{z}^2)^{1/2}}$ and observe
      that $\modulo{b'(z)}\leq 1$. Then
      \begin{align}
        \nonumber
        \modulo{D_+ - D_-} = \
        & \modulo{b(a(x_{i+1/2})) - b(a(x_{i-1/2}))}
          = \modulo{b'(a(\tilde x_i)) \, a'(\tilde x_i) \, (x_{i+1/2} - x_{i-1/2})}
        \\
        \nonumber
        = \
        & \modulo{\frac{a (\tilde x_i)}{(1+a (\tilde x_i)^2)^{1/2}} \,
          (\partial_x \nabla \eta * \rho^n)(\tilde x_i) \, \dx}
        \\
        \label{eq:5}
        \leq \
        &
          \dx \, \norma{\rho^n}_{\L1} \norma{\nabla^2 \eta}_{\L\infty}.
      \end{align}
      Therefore,
      \begin{equation}
        \label{eq:2bok}
        \epsilon \, \dx \, \dy \,
        \modulo{\frac{1}{D_+} - \frac{1}{D_-}} \, \sum_{k, \ell \in
          \interi} \modulo{\rh{k,\ell}} \modulo{\partial_1
          \eta(x_{i-1/2-k},y_{j-\ell})} \leq
        \epsilon \, \dx \, \norma{\nabla^2 \eta}_{\L\infty} \norma{\rho^n}_{\L1}.
      \end{equation}

 \noindent Inserting~\eqref{eq:2ok} and~\eqref{eq:2bok} into the estimate
   of~\eqref{eq:2original} yields the desired result.

\smallskip
   Consider now~\eqref{eq:Jtripla}. Introduce the following notation:
   for $\mu \in \left\{-1 ;\, 1;\, 3 \right\}$ set
   \begin{displaymath}
     D_{\mu} = \sqrt{1 + \norma{(\nabla\eta * \rho^n) (x_{i+\mu/2}, y_j)}^2}.
   \end{displaymath}
   Thus
   \begin{align*}
     & J_1^n (\x{i+3/2,j}) - 2 \, J_1^n (\x{i+1/2,j}) + J_1^n (\x{i-1/2,j})
     \\
     = \
     & - \epsilon \left(\frac{(\partial_1\eta * \rho^n) (\x{i+3/2,j})}{D_3}
       - 2 \, \frac{(\partial_1\eta * \rho^n) (\x{i+1/2,j})}{D_1}
       + \frac{(\partial_1\eta * \rho^n) (\x{i-1/2,j})}{D_{-1}}\right.
     \\
     & \quad
     \left.
       \pm \frac{(\partial_1\eta * \rho^n) (x_{i+3/2,j})}{D_1}
       \pm \frac{(\partial_1\eta * \rho^n) (x_{i-1/2,j})}{D_1}\right)
     \\
     = \
     & - \epsilon \left(
       \left(\frac{1}{D_3} - \frac{1}{D_1}\right) (\partial_1\eta * \rho^n) (x_{i+3/2,j})
       + \frac{1}{D_1}
       \left((\partial_1\eta * \rho^n) (x_{i+3/2,j})
         - (\partial_1\eta * \rho^n) (x_{i+1/2,j})\right)\right.
     \\
     & \!\quad \left.
         + \frac{1}{D_1}\!
       \left((\partial_1\eta * \rho^n) (x_{i-1/2,j})
         - (\partial_1\eta * \rho^n) (x_{i+1/2,j})\right)
       \!+\!
       \left(\frac{1}{D_{-1}} - \frac{1}{D_1}\right) (\partial_1\eta * \rho^n) (x_{i-1/2,j})
     \right).
   \end{align*}
   Consider the terms separately, forgetting for a moment the
   $\epsilon$ in front of everything. Focus first on the terms with
   common denominator $D_1$:
   \begin{align}
     \nonumber
     & \frac{1}{D_1}\!
     \left((\partial_1\eta * \rho^n) (x_{i+3/2,j})\!
       - (\partial_1\eta * \rho^n) (x_{i+1/2,j})
       + (\partial_1\eta * \rho^n) (x_{i-1/2,j})
       - (\partial_1\eta * \rho^n) (x_{i+1/2,j})\right)
     \\  \nonumber
     = \
     & \frac{\dx \, \dy}{D_1} \sum_{k, \ell \in \interi} \rho^n_{k, \ell} \,
     \left(
       \partial_1 \eta (x_{i+3/2 -k}, y_{j-\ell})
       -  \partial_1 \eta (x_{i+1/2 -k}, y_{j-\ell})
     \right.
     \\  \nonumber
     & \qquad\qquad\qquad\qquad\quad
     \left.
       +  \partial_1 \eta (x_{i-1/2 -k}, y_{j-\ell})
       -  \partial_1 \eta (x_{i+1/2 -k}, y_{j-\ell})
     \right)
     \\  \nonumber
     = \
     &\frac{\dx \, \dy}{D_1} \sum_{k, \ell \in \interi} \rho^n_{k, \ell} \, \dx \,
     \left( \partial_{11}^2 \eta(\hat x_{i+1-k}, y_{j-\ell})
       -  \partial_{11}^2 \eta(\hat x_{i-k}, y_{j-\ell}) \right)
     \\  \nonumber
     = \
     & \frac{\dx \, \dy}{D_1} \sum_{k, \ell \in \interi} \rho^n_{k, \ell} \, \dx \,
     \int_{\hat x_{i-k}}^{\hat x_{i+1-k}} \partial_{111}^3 \eta (x, y_{j-\ell}) \d{x}
     \\ \label{eq:7}
     \leq \
     & 2 \, (\dx)^2 \, \norma{\partial_{111}^3 \eta}_{\L\infty} \norma{\rho^n}_{\L1},
   \end{align}
   with $\hat x_{i-k} \in \, ]x_{i-1/2-k}, x_{i+1/2-k}[$. We are left with
   \begin{equation}
     \label{eq:4}
      \left(\frac{1}{D_3} - \frac{1}{D_1}\right) (\partial_1\eta * \rho^n) (x_{i+3/2,j})
      +
      \left(\frac{1}{D_{-1}} - \frac{1}{D_1}\right) (\partial_1\eta * \rho^n) (x_{i-1/2,j}).
   \end{equation}
   Add and subtract to~\eqref{eq:4}
   \begin{displaymath}
     \left(\frac{1}{D_{-1}} - \frac{1}{D_1}\right) (\partial_1\eta * \rho^n) (x_{i+3/2,j}).
   \end{displaymath}
   Hence,
   \begin{align}
     \label{eq:4a}
    &  \left(\frac{1}{D_3} - 2 \, \frac{1}{D_1} + \frac{1}{D_{-1}} \right)
     (\partial_1\eta * \rho^n) (x_{i+3/2,j})
    \\
    \label{eq:4b}
    & \quad+
     \left(\frac{1}{D_{-1}} - \frac{1}{D_1}\right) \left(
       (\partial_1\eta * \rho^n) (x_{i-1/2,j})
       - (\partial_1\eta * \rho^n) (x_{i+3/2,j})
     \right).
   \end{align}
   Consider first~\eqref{eq:4b}: exploiting also~\eqref{eq:5}, we obtain
   \begin{align}
     \nonumber
     [\eqref{eq:4b}]= \
     & \frac{D_1 - D_{-1}}{D_1 \, D_{-1}} \, \dx \, \dy
     \sum_{k, \ell \in \interi} \rho^n_{k,\ell} \left(
       \partial_1 \eta(x_{i-1/2-k}, y_{j-\ell})
       -
       \partial_1 \eta(x_{i+3/2-k}, y_{j-\ell})
     \right)
     \\ \nonumber
     = \
     & \frac{D_1 - D_{-1}}{D_1 \, D_{-1}} \, \dx \, \dy
     \sum_{k, \ell \in \interi} \rho^n_{k,\ell}
     \int_{x_{i+3/2-k}}^{x_{i-1/2-k}} \partial_{11}^2 \eta(x,y_{j-\ell}) \d{x}
     \\ \label{eq:4bok}
     \leq \
     &
     2 \, (\dx)^2 \, \norma{\nabla^2 \eta}^2_{\L\infty} \norma{\rho^n}^2_{\L1}.
   \end{align}
   As far as~\eqref{eq:4a} is concerned, focus on the terms in the brackets:
   \begin{align}
     \nonumber
     \frac{1}{D_3} - 2 \, \frac{1}{D_1} + \frac{1}{D_{-1}}
     = \
     & \frac{D_1 \, D_{-1} - 2 \, D_3 \, D_{-1} + D_3 \, D_1}{D_3 \, D_1 \, D_{-1}}
     \\ \nonumber
     = \
     & \frac{D_{-1}(D_1 -  D_3 ) - D_3 (D_{-1}- D_1) \pm D_3(D_1 -  D_3 )}
     {D_3 \, D_1 \, D_{-1}}
     \\  \label{eq:6}
     = \
     & \frac{(D_{-1} - D_3)(D_1-D_3)}{D_3 \, D_1 \, D_{-1}}
     - \frac{D_{-1} - 2 \, D_1 + D_3}{D_1 \, D_{-1}}.
   \end{align}
   Inserting the first addend of~\eqref{eq:6} back into~\eqref{eq:4a} yields
   \begin{equation}
     \label{eq:4a1ok}
     \frac{(D_{-1} - D_3)(D_1-D_3)}{D_3 \, D_1 \, D_{-1}}
     (\partial_1\eta * \rho^n) (x_{i+3/2,j})
     \leq
     2 \, (\dx)^2 \norma{\nabla^2 \eta}_{\L\infty}^2 \norma{\rho^n}_{\L1}^2,
   \end{equation}
   where we exploit~\eqref{eq:5} twice and use the fact that $\dfrac{
     (\partial_1\eta * \rho^n) (x_{i+3/2,j})}{D_3} \leq
   1$. Concerning the second addend of~\eqref{eq:6}, focus on its
   numerator: with the notation introduced before~\eqref{eq:5},
   \begin{align*}
     D_{-1}\! - 2 \, D_1 + D_3 = \
     & b\left(a(x_{i+3/2})\right) - 2 \, b\left(a(x_{i+1/2})\right)
     + b\left(a(x_{-1/2})\right)
     \\
     = \
     & b' \left(a(\check x_{i+1})\right) \, a' (\check x_{i+1}) (x_{i+3/2} - x_{i+1/2})
     -  b' \left(a(\check x_{i})\right) \, a' (\check x_{i}) (x_{i+1/2} - x_{i-/2})
     \\
     = \
     & \dx \left(
       b' \left(a(\check x_{i+1})\right) (\partial_1 \nabla \eta * \rho^n) (\check x_{i+1})
       -  b' \left(a(\check x_{i})\right)  (\partial_1 \nabla \eta * \rho^n) (\check x_{i})
     \right)
     \\
     & \pm \dx \,\, b'\!\left(a (\check x_i)\right)
     (\partial_1 \nabla \eta * \rho^n) (\check x_{i+1})
     \\
     = \
     & \dx \, \left[
      b' \left(a(\check x_{i+1})\right) - b'\left(a (\check x_i)\right)
     \right] (\partial_1 \nabla \eta * \rho^n) (\check x_{i+1})
     \\
     & + \dx \, \, b'\!\left(a (\check x_i)\right) \left[
        (\partial_1 \nabla \eta * \rho^n) (\check x_{i+1})
        - (\partial_1 \nabla \eta * \rho^n) (\check x_{i})
     \right]
     \\
     = \
     & \dx \, b''\!\left(a(\overline x_{i+1/2})\right) \, a' (\overline x_{i+1/2}) \,
     (\check x_{i+1} - \check x_{i}) \,
     (\partial_1 \nabla \eta * \rho^n) (\check x_{i+1})
     \\
     & + \dx \, \,  b'\!\left(a (\check x_i)\right) \, \dx \, \dy
     \sum_{k, \ell \in \interi} \rho^n_{k, \ell} \left(
     \partial_1 \nabla \eta(\check x_{i+1-k}, y_{j-\ell})
     -  \partial_1 \nabla \eta(\check x_{i-k}, y_{j-\ell})
     \right)
     \\
     =\
     & \dx \, b''\!\left(a(\overline x_{i+1/2})\right) \, a' (\overline x_{i+1/2}) \,
     (\check x_{i+1} - \check x_{i}) \,
     (\partial_1 \nabla \eta * \rho^n) (\check x_{i+1})
     \\
     &  + \dx \, \,  b'\!\left(a (\check x_i)\right) \, \dx \, \dy
     \sum_{k, \ell \in \interi} \rho^n_{k, \ell} \int_{\check x_{i-k}}^{\check x_{i+1-k}}
     \partial^2_{11}\nabla \eta (x, y_{j-\ell}) \d{x}
   \end{align*}
   where $\check x_i \in \, ]x_{i-1/2}, x_{i+1/2}[$ and $\overline
   x_{i+1/2} \in \, ]\check x_i, \check x_{i+1}[$. Now insert this
   estimate back into~\eqref{eq:6} and~\eqref{eq:4a}: since $\modulo{b''(z)}\leq 1$,
   \begin{equation}
     \label{eq:4a2ok}
     \modulo{ \frac{D_{-1} - 2 \, D_1 + D_3}{D_1 \, D_{-1}}
      (\partial_1\eta * \rho^n) (x_{i+3/2,j})}
    \leq
    2 \, (\dx)^2 \left[
      \norma{\partial_1 \nabla \eta}_{\L\infty}^2 \norma{\rho^n}_{\L1}^2
    + \norma{\partial_{11}^2 \nabla \eta}_{\L\infty} \norma{\rho^n}_{\L1}\right].
   \end{equation}
   Collecting together~\eqref{eq:7}, \eqref{eq:4bok}, \eqref{eq:4a1ok}
   and~\eqref{eq:4a2ok} yields
   \begin{align*}
     & \modulo{J_1^n (x_{i+3/2},y_j) - 2 \, J_1^n (x_{i+1/2},y_j) + J_1^n (x_{i-1/2},y_j)}
     \\
     \leq \
     & 2 \, \epsilon \,  (\dx)^2 \left(
       2 \, \norma{\nabla^3 \eta}_{\L\infty} \norma{\rho^n}_{\L1}
       + 3 \, \norma{\nabla^2 \eta}^2_{\L\infty} \norma{\rho^n}^2_{\L1}
     \right).
   \end{align*}

 \end{proof}
\end{appendices}

\small{ \bibliography{matflow}

  \bibliographystyle{abbrv} }

 \end{document}